\newtheorem{theorem}{Theorem}[section]
\newtheorem{corollary}[theorem]{Corollary}
\newtheorem{lemma}[theorem]{Lemma}
\newtheorem{proposition}[theorem]{Proposition}
\newtheorem{remark}[theorem]{Remark}
\newtheorem{example}[theorem]{Example}
\numberwithin{equation}{section}
\newcommand{\midarrow}{\tikz \draw[thin,-angle 45] (0,0) -- +(.25,0);}
\newcommand{\scri}[1]{\text{\scriptsize{#1}}}
\begin{document}

\title[A combinatorial model for $q$-characters of fundamental modules]{A combinatorial model for $q$-characters of fundamental modules of type $D_{n}$}
\author{Jun Tong}
\address{Jun Tong: School of Mathematics and Statistics, Lanzhou University, Lanzhou 730000, P. R. China.}
\email{tongj18@lzu.edu.cn}

\author{Bing Duan}
\address{Bing Duan: School of Mathematics and Statistics, Lanzhou University, Lanzhou 730000, P. R. China.}
\email{duanbing@lzu.edu.cn}

\author{Yan-Feng Luo$^\dag$}
\address{Yan-Feng Luo: School of Mathematics and Statistics, Lanzhou University, Lanzhou 730000, P. R. China.}
\email{luoyf@lzu.edu.cn}
\thanks{$\dag$ Corresponding author}

\date{}

\maketitle

\begin{abstract}

In this paper, we introduce a combinatorial path model of representation of the quantum affine algebra of type $D_n$, inspired by Mukhin and Young's combinatorial path models of representations of the quantum affine algebras of types $A_n$ and $B_n$. In particular, we give a combinatorial formula for $q$-characters of fundamental modules of type $D_{n}$ by assigning each path to a monomial or binomial. By counting our paths, a new expression on dimensions of fundamental modules of type $D_n$ is obtained.

\hspace{0.15cm}

\noindent
{\bf Keywords}: Quantum affine algebras; Fundamental modules; $q$-characters; Combinatorial path models; Screening operators

\hspace{0.15cm}

\noindent
{\bf 2020 Mathematics Subject Classification}: 17B37
\end{abstract}

\section{Introduction}\label{Introduction}

Let $\mathfrak{g}$ be a simple Lie algebra over $\mathbb{C}$, and $I$ the vertex set of the Dynkin diagram of $\mathfrak{g}$. Let $\widehat{\mathfrak{g}}$ be the corresponding untwisted affine Kac-Moody algebra, and $U_q (\widehat{\mathfrak{g}})$ its quantum affine algebra with quantum parameter $q\in \mathbb{C}^{\times}$ not a root of unity. Denote by $\mathscr{C}$ the category of finite-dimensional $U_q (\widehat{\mathfrak{g}})$-modules. Every simple module in $\mathscr{C}$ is parameterized by its highest $\ell$-weight monomial \cite{CP91,CP94,FR98}.

In recent decades, the study of the category $\mathscr{C}$ has attracted much attention of many researchers and scholars from different perspectives, for examples, analytic \cite{BR90,KOS95}, algebraic \cite{Bit21a,Bit21b,C95,CP91,CP94,CMY13,FR98,FM01,HL10,HL16,LQ17,MY12b,TDL23,ZDLL16}, combinatoric \cite{GDL22,MY12a,Nak03,NN07}, and geometric \cite{Nak01,Nak04,VV02}.

The concept of $q$-characters was introduced by Frenkel and Reshetikhin \cite{FR98}. The $q$-character map is defined as an injective ring homomorphism from the Grothendieck ring $\mathcal{K}_0(\mathscr{C})$ of $\mathscr{C}$ to the ring $\mathbb{Z}[Y^{\pm1}_{i,a}| i\in I,a\in \mathbb{C}^{\times}]$ of Laurent polynomials in the infinitely formal variables $(Y_{i,a})_{i\in I,a\in \mathbb{C}^{\times}}$. Similar to Cartan's highest weight classification of finite-dimensional representations of $\mathfrak{g}$, for a $U_q(\widehat{\mathfrak{g}})$-module $V$, $\chi_{q}([V])$ encodes the decomposition of $V$ into common generalized eigenspaces for the action of a large commutative subalgebra (called the loop-Cartan subalgebra) of $U_q (\widehat{\mathfrak{g}})$, where $[V]\in \mathcal{K}_0(\mathscr{C})$ is the equivalent class of $V$. These generalized eigenspaces are called $\ell$-weight spaces of $V$, and generalized eigenvalues are called $\ell$-weights of $V$. It has turned out that the theory of $q$-characters already plays an important role in the study of $\mathscr{C}$, for example, every simple module in $\mathscr{C}$ is determined up to isomorphism by its $q$-character.

Frenkel and Mukhin \cite{FM01} proposed an algorithm to compute $q$-characters of some simple modules, now the algorithm is called the Frenkel-Mukhin algorithm. In some cases, the Frenkel-Mukhin algorithm does not return all terms in the $q$-character of a module, some counterexamples were given in \cite{NN11}. However, Frenkel-Mukhin algorithm produces the correct $q$-characters of modules in many cases. In particular, if a module $L(m)$ is special, then the Frenkel-Mukhin algorithm applied to $m$, produces the correct $q$-character $\chi_{q}([L(m)])$, see \cite{FM01}, where $L(m)\in \mathscr{C}$ denotes the simple $U_q (\widehat{\mathfrak{g}})$-module with the highest $\ell$-weight monomial $m$.

A new and interesting connection between $q$-characters and cluster algebras was established by Hernandez and Leclerc \cite{HL10}, in particular, the notion of monoidal categorification of a cluster algebra was introduced. Many achievements in monoidal categorifications of cluster algebras have sprung up or are emerging, see \cite{Bit21a,Bit21b,BC19,DLL19,DS20,HL16,KKKO18,KKOP22,LQ17,Q17,TDL23,ZDLL16}.

The concept of screening operators was introduced by Frenkel and Reshetikhin \cite{FR98}. For $\mathfrak{g}=\mathfrak{sl_{2}}$, Frenkel and Reshetikhin proved that the image of the $q$-character homomorphism equals the intersection of the kernels of screening operators. Frenkel and Mukhin \cite{FM01} proved it for general $\mathfrak{g}$, and predicted that a purely combinatorial algorithm for the $q$-character of a simple module may exist.

In this paper, we devote ourselves to developing a combinatorial algorithm for $q$-characters of fundamental modules of type $D_n$.

Mukhin and Young \cite{MY12a} introduced the notion of snake modules of type $A_n$ and type $B_n$, and found combinatorial models to compute $q$-characters of snake modules. The Mukhin-Young algorithm is a useful tool in subsequent studies of snake modules \cite{DLL19,DS20,MY12b,BM17}. In \cite{GDL22}, the authors introduced a path description for the $q$-characters of Hernandez-Leclerc modules of type $A_n$, where overlapped paths are allowed. In \cite{J22}, the author gave a path description for $q$-characters of fundamental modules of type $C_n$.

Inspired by Mukhin-Young's combinatorial path model for snake modules of type $B_{n-1}$, we introduce a combinatorial path model of type $D_n$, see Section \ref{Paths, corners, and moves}, such that the $q$-characters of fundamental modules of type $D_n$ are computed by paths, See Theorem \ref{path formula for fundamental modules}, where each path is assigned to a monomial or binomial, see Equations (\ref{the mononial ssociated to a path}) and (\ref{monomials associated to paths1}). Moreover, our paths are different from those of \cite{NN07}, refer the reader to \cite[Remark 7.7 (i)]{MY12a} for details. 

As a consequence, a new expression on dimensions of fundamental modules of type $D_n$ is obtained by counting our paths, see Theorem \ref{the number of monomials} and Corollary \ref{a new expression on dimensions of fundamental modules}. Note that the Chari-Pressley's decomposition \cite{CP95a} of fundamental modules as $U_q (\mathfrak{g})$-modules in fact gave a formula on dimensions of a fundamental modules of type $D_n$. Our dimensional formulas are purely combinatorial methods, just by counting paths without a priori representation-theoretical information about $\mathfrak{g}$.

The paper is organized as follows. In Section \ref{Quantum affine algebras and $q$-characters}, some necessary knowledge about $q$-characters and representations of quantum affine algebras are collected. In Section \ref{Paths, corners, and moves}, we give a combinatorial model of type $D_n$ and set   the correspondence between paths and monomials in variables $(Y^{\pm1}_{i,a})_{i\in I,a\in \mathbb{C}^{\times}}$. In Section \ref{Dimensions of fundamental modules}, dimension formulas on all the fundamental modules of type $D_{n}$ are obtained by counting our paths, see Theorem \ref{the number of monomials} and Corollary \ref{a new expression on dimensions of fundamental modules}. In Section \ref{Path description for $q$-characters of fundamental modules}, we use our combinatorial model to give an algorithm of the $q$-characters of fundamental modules of type $D_n$, see Theorem \ref{path formula for fundamental modules}. Finally, we give an example of type $D_4$ to illustrate our main theorem.

\section{Quantum affine algebras and $q$-characters}\label{Quantum affine algebras and $q$-characters}

\subsection{Cartan data}
Let $\mathfrak{g}$ be a simple Lie algebra over $\mathbb{C}$, and $I=\{1, \ldots, n\}$ the vertex set of the Dynkin diagram of $\mathfrak{g}$, where we use the same labeling with the one in \cite{Bou02}. Let $\{\alpha_{i}\} _{i\in I}$, $\{\alpha^{\vee}_{i}\} _{i\in I}$, and $\{\omega_{i}\} _{i\in I}$ be the set of simple roots, simple coroots and fundamental weights, respectively. Denote by $Q$ (resp. $Q^+$) and $P$ (resp. $P^+$) the $\mathbb{Z}$-span (resp. $\mathbb{Z}_{\geq 0}$-span) of the simple roots and fundamental weights, respectively. One can define a partial order $\leq$ on $P$ by $\lambda \leq \lambda'$ if and only if $\lambda' - \lambda \in Q^+$. Let $C=(c_{ij})_{i,\,j\in I}$ be the Cartan matrix of $\mathfrak{g}$, where $c_{ij}=\frac{2(\alpha_i,\alpha_j) }{(\alpha_i, \alpha_i)}$. There exists a diagonal matrix $D=\text{diag}(d_1,\ldots,d_n)$ with positive integer entries $d_i \,(i\in I)$ such that $B=(b_{ij})_{i,\,j\in I}=DC$ is a symmetric matrix. Here we require that $\min\{d_i \mid i\in I\}=1$.

Fix a $q\in \mathbb{C}^{\times}$, not a root of unity, one defines the $q$-number, $q$-factorial and $q$-binomial as follows:
\begin{align*}
[n]_{q} := \frac{q^n-q^{-n}}{q-q^{-1}},\quad  [n]_{q} ! := [n]_{q}[n-1]_{q} \cdots [1]_{q},\quad \binom{n}{m}_q := \frac{[n]_{q} ! }{[n-m]_{q} !  [m]_{q}  !}.
\end{align*}

\subsection{Quantum affine algebras}
Let $q\in \mathbb{C}^{\times}$ be not a root of unity unless otherwise specified. The quantum affine algebra $U_q (\widehat{\mathfrak{g}})$ has a Drinfeld's new realization \cite{Dri88,Beck94}, with generators $x^{\pm}_{i,n} \, (i\in I, n\in \mathbb{Z})$, $k_i^{\pm 1}\, (i\in I)$, $h_{i,n}\,(i\in I, n\in \mathbb{Z}\setminus\{0\})$ and central elements $c^{\pm 1/2}$, subject to the following relations (here we refer to \cite{MY12a}):
\begin{align*}
k_ik_j = k_jk_i, \quad  & k_ih_{j,n} =h_{j,n}k_i,\nonumber \\
k_ix^\pm_{j,n}k_i^{-1} &= q^{\pm b_{ij}}x_{j,n}^{\pm},\nonumber \\
[h_{i,n}, x_{j,m}^{\pm}] &= \pm \frac{1}{n} [n b_{ij}]_q c^{\mp {|n|/2}}x_{j,n+m}^{\pm},\nonumber \\
x_{i,n+1}^{\pm}x_{j,m}^{\pm} -q^{\pm b_{ij}}x_{j,m}^{\pm}x_{i,n+1}^{\pm} &=q^{\pm b_{ij}}x_{i,n}^{\pm}x_{j,m+1}^{\pm}-x_{j,m+1}^{\pm}x_{i,n}^{\pm}, \\ [h_{i,n},h_{j,m}] & =\delta_{n,-m} \frac{1}{n} [n b_{ij}]_q \frac{c^n - c^{-n}}{q-q^{-1}},\nonumber \\
[x_{i,n}^+ , x_{j,m}^-]=\delta_{ij} & \frac{ c^{(n-m)/2}\phi_{i,n+m}^+ - c^{-(n-m)/2} \phi_{i,n+m}^-}{q^{d_i}-q^{-d_i}}, \\
\sum_{\pi\in\Sigma_s}\sum_{k=0}^s(-1)^k \left[\begin{array}{cc} s \\ k \end{array} \right]_{q^{d_i}} x_{i, n_{\pi(1)}}^{\pm} \ldots x_{i,n_{\pi(k)}}^{\pm} & x_{j,m}^{\pm} x_{i,n_{\pi(k+1)}}^{\pm}\ldots x_{i,n_{\pi(s)}}^{\pm} =0,\nonumber \,\,\, s=1-c_{ij},
\end{align*}
for all sequences of integers $n_1,\ldots,n_s$, and $i\ne j$, where $\Sigma_s$ is the symmetric group on $\{1,\ldots,s\}$, and $\phi_{i,n}^{\pm}$'s are defined by the formula
\begin{equation*} \label{phidef}
\phi_i^\pm(u) := \sum_{n=0}^{\infty}\phi_{i,\pm n}^{\pm}u^{\pm n} = k_i^{\pm 1} \exp\left(\pm(q-q^{-1})\sum_{m=1}^{\infty}h_{i,\pm m} u^{\pm m}\right),
\end{equation*}
where $\phi^{+}_{i, n}=0$ for $n<0$, and $\phi^{-}_{i, n}=0$ for $n>0$.

The quantum affine algebra $U_q (\widehat{\mathfrak{g}})$ is an associative and non-commutative algebra. There exist a coproduct, counit and antipode making $U_q (\widehat{\mathfrak{g}})$ into a Hopf algebra, see \cite[Proposition 1.2]{C95}. Let $U_q(\mathfrak{g})$ be the quantized universal enveloping algebra of $\mathfrak{g}$ with Chevalley generators $x^{\pm}_i$ and $k^{\pm 1}_i$, with $i\in I$, subject to Chevalley-Serre relations, see \cite[Definition 9.1.1]{CP94}. It is well-known that $U_q(\mathfrak{g})$ is a (Hopf) subalgebra of $U_q (\widehat{\mathfrak{g}})$. So, every $U_q (\widehat{\mathfrak{g}})$-module restricts to a $U_q (\mathfrak{g})$-module.

\subsection{Finite-dimensional representations of $U_q (\widehat{\mathfrak{g}})$}
In this section, we recall some  necessary background about finite-dimensional representations of $U_q (\widehat{\mathfrak{g}})$.

A representation $V$ of $U_q (\widehat{\mathfrak{g}})$ is of type $1$ if $c^{\pm 1/2}$ act as the identity on $V$ and $V$ is of type 1 as a  $U_q (\mathfrak{g})$-module, that is,
\begin{equation}\label{decomposition}
V = \bigoplus_{\lambda\in P} V_\lambda, \quad  V_\lambda = \{ v \in V \mid k_i  v = q^{(\alpha_i, \lambda)} v\}.
\end{equation}
Following \cite{CP94}, every finite-dimensional irreducible representation of $U_q (\widehat{\mathfrak{g}})$ can be obtained from a type $1$ representation by twisting with an automorphism of $U_q (\widehat{\mathfrak{g}})$. In what follows, all representations are assumed to be finite-dimensional and of type $1$.

In (\ref{decomposition}), the decomposition of a finite-dimensional representation $V$ into its $U_q (\widehat{\mathfrak{g}})$-weight spaces can be refined by decomposing it into Jordan subspaces of mutually commuting operators
\begin{equation*}
 V = \bigoplus_{\gamma} V_\gamma, \quad \gamma = (\gamma_{i,\pm r}^\pm)_{i\in I, r\in  \mathbb{Z}_{\geq 0}}, \quad \gamma_{i,\pm r}^\pm \in \mathbb{C},
\end{equation*}
where
\begin{equation*}
V_{\gamma} = \{  v \in V \mid \exists \, k \in \mathbb{N}, \, \forall \, i \in I, m\geq 0, \, \left(\phi_{i,\pm m}^\pm - \gamma_{i,\pm m}^\pm\right)^k v = 0 \}.
\end{equation*}

If $\dim(V_{\gamma})>0$, then $\gamma$ is called an \textit{$\ell$-weight} of $V$, and $V_{\gamma}$ is called an \textit{$\ell$-weight space} of $V$ with $\ell$-weight $\gamma$. Following \cite{FR98}, for every finite-dimensional representation of $U_q (\widehat{\mathfrak{g}})$, the $\ell$-weights are of the form
\begin{equation}\label{gwts}
\gamma_i^\pm(u) := \sum_{r =0}^\infty \gamma_{i,\pm r}^\pm u^{\pm r}= q^{d_i(\deg Q_i - \deg R_i)} \, \frac{Q_i(uq^{-d_i}) R_i(uq^{d_i})}{Q_i(uq^{d_i}) R_i(uq^{-d_i})} \,,\end{equation}
where the right hand side is to be treated as a formal series in positive (resp. negative) integer powers of $u$, and $Q_i$ and $R_i$ are polynomials of the form
\begin{equation}\label{gwtt}
Q_i(u) = \prod_{a\in \mathbb{C}^{\times}} \left( 1- ua\right)^{w_{i,a}}, \quad R_i(u) = \prod_{a\in \mathbb{C}^{\times}} \left( 1- ua\right)^{x_{i,a}},
\end{equation}
for some $w_{i,a}, x_{i,a}\geq 0$, $i\in I,a\in \mathbb{C}^{\times}$.

Let $\mathcal{P}$ be the free abelian multiplicative group of monomials in infinitely many formal variables $(Y^{\pm 1}_{i,a})_{i\in I, a\in \mathbb{C}^{\times}}$, and $\mathcal{P}^{+}$ (respectively, $\mathcal{P}^{-}$) the submonoid of $\mathcal{P}$ generated by $(Y_{i,a})_{i\in I, a\in \mathbb{C}^{\times}}$ (respectively, $(Y^{-1}_{i,a})_{i\in I, a\in \mathbb{C}^{\times}}$). Every monomial in $\mathcal{P}^+$ (respectively, $\mathcal{P}^-$) is called a dominant (respectively, anti-dominant) monomial. There is a bijection from $\mathcal{P}$ to the set of $\ell$-weights $\gamma$ of finite-dimensional $U_q (\widehat{\mathfrak{g}})$-modules such that for the monomial
\[
m = \prod_{i \in I, a\in \mathbb{C}^{\times}} Y_{i,a}^{w_{i,a}-x_{i,a}},
\]
the $\ell$-weights are given by (\ref{gwts}), (\ref{gwtt}). We identify $\ell$-weights of finite-dimensional representations with elements of $\mathcal{P}$ in this way.

It is well-known that every finite-dimensional $U_q (\widehat{\mathfrak{g}})$-module $V$ is a highest $\ell$-weight module, that is, there exists a non-zero vector $v\in V$ such that $\phi^\pm_{i,\pm t}$, with $i\in I, t\in \mathbb{Z}_{\geq 0}$, diagonally act on $V$ and
\begin{equation*}
x^+_{i,r}v = 0 \, \text{ for all } i\in I, r\in \mathbb{Z}.
\end{equation*}

It is known that for each $m \in\mathcal{P}^{+}$, there is a unique finite-dimensional irreducible representation, denoted by $L(m)$, of $U_q(\widehat{\mathfrak{g}})$ that is a highest $\ell$-weight module with the highest $\ell$-weight $\gamma(m)$, and moreover every finite-dimensional irreducible $U_q(\widehat{\mathfrak{g}})$-module is of this form for some $m\in \mathcal{P}^{+}$.

In the case where $m=Y_{i,a}$ for some $i\in I$, $a\in \mathbb{C}^{\times}$, $L(Y_{i,a})$ is called a \textit{fundamental module}. If $m_1,m_2 \in \mathcal{P}^{+}$ and $m_1 \neq m_2$, then $L(m_1)\neq L(m_2)$.

From now on, we fix an $a\in \mathbb{C}^{\times}$, by an abuse of notation, it is convenient to write
\[
Y_{i,k}:=Y_{i,aq^{k}}, \quad A_{i,k}:=A_{i,aq^{k}}.
\]

A finite-dimensional $U_q(\widehat{\mathfrak{g}})$-module $V$ is said to be \textit{thin} if and only if  every $\ell$-weight space of $V$ has dimension $1$. 
A finite-dimensional $U_q(\widehat{\mathfrak{g}})$-module $V$ is said to be \textit{prime} if and only if it cannot be written as a tensor product of two non-trivial $U_q(\widehat{\mathfrak{g}})$-modules \cite{CP97}.

\subsection{$q$-characters}
The notion of $q$-characters was introduced by Frenkel and Reshetikhin \cite{FR98}.

Let $\mathcal{Y}=\mathbb{Z}[Y^{\pm1}_{i,a}]_{i\in I, a\in\mathbb{C}^{\times}}$ be the ring of Laurent polynomials in the variables $(Y_{i,a})_{i\in I, a\in \mathbb{C}^{\times}}$ with integer coefficients, and $\mathrm{Rep}(U_q (\widehat{\mathfrak{g}}))$ the Grothendieck ring of finite-dimensional representations of $U_q (\widehat{\mathfrak{g}})$ and $[V] \in \mathrm{Rep}(U_q (\widehat{\mathfrak{g}}))$ the equivalent class of a finite-dimensional $U_q (\widehat{\mathfrak{g}})$-module $V$.

The $q$-character map is defined as an injective ring homomorphism from $\mathrm{Rep}(U_q (\widehat{\mathfrak{g}}))$ to $\mathcal{Y}$ such that for any finite-dimensional $U_q (\widehat{\mathfrak{g}})$-module $V$,
\begin{align*}
\chi_q([V])=\sum_{m\in \mathcal{P}} \dim(V_{m})m,
\end{align*}
where $V_m$ is the $\ell$-weight space of $V$ with $\ell$-weight $m$. Define $A_{i,a}\in \mathcal{P}$, with $i\in I, a\in\mathbb{C}^{\times}$, by
\begin{equation*}
\label{adef} A_{i,a} = Y_{i,aq^{d_i}} Y_{i,aq^{-d_i}} \prod_{c_{ji}=-1}Y^{-1}_{j,a}  \prod_{c_{ji}=-2}Y^{-1}_{j,aq} Y^{-1}_{j,aq^{-1}} \prod_{c_{ji}=-3}Y^{-1}_{j,aq^2} Y^{-1}_{j,a} Y^{-1}_{j,aq^{-2}}.
\end{equation*}

Let $\mathcal{Q}$ be the subgroup of $\mathcal{P}$ generated by $A^{\pm 1}_{i, a}$, with $i\in I, a\in \mathbb{C}^{\times}$. Let $\mathcal{Q}^{\pm}$ be the monoids generated by $A_{i, a}^{\pm 1}$, with $i\in I, a\in \mathbb{C}^{\times}$. There is a partial order $\leq$ on $\mathcal{P}$ in which
\begin{equation*}
m\leq m' \text{ if and only if } m'm^{-1}\in \mathcal{Q}^{+}. \label{partial order of monomials}
\end{equation*}

For a simple finite-dimensional $U_q (\widehat{\mathfrak{g}})$-module $V$, Frenkel and Mukhin \cite{FM01} proved that the $q$-character of $V$ has the following form
\begin{equation}\label{mathcal{Q}}
\chi_{q}([V]) = m_{+}(1+\sum_{p}M_{p}),
\end{equation}
where $m_{+}\in \mathcal{P}^+$, and each $M_{p}$ is a monomial in $A^{-1}_{i,r}$, with $i\in I, r\in \mathbb{C}^{\times}$.

Every simple finite-dimensional $U_q (\widehat{\mathfrak{g}})$-module is determined up to isomorphism by its $q$-character. A finite-dimensional $U_q(\widehat{\mathfrak{g}})$-module $V$ is said to be \textit{special} if and only if $\chi_q([V])$ has exactly one dominant monomial. It is \textit{anti-special} if and only if $\chi_q([V])$ has exactly one anti-dominant monomial. It is well-known that if a module is special or anti-special, then it is simple.

For $m \in \mathcal{P}^{+}$, let $\mathscr{M}$ be the set of all monomials in $\chi_{q}([L(m)])$. If $m\in \mathscr{M}$, then we write $m\in \chi_{q}([L(m)])$.

\subsection{Screening operators}

For each $i\in I$, let $\widetilde{\mathcal{Y}}_i$ be the free $\mathcal Y$-module with basis $S_{i,x}$, with $x \in\mathbb{C}^{\times}$, and ${\mathcal Y}_i$ the quotient of $\widetilde{\mathcal{Y}}_i$ by the submodule generated by elements of the form $S_{i,xq_i^{2}}=A_{i,xq_i}S_{i,x}$, where $q_i=q^{d_i}$.

Define a linear operator $\widetilde{S_{i}} : \mathcal Y \rightarrow \widetilde{\mathcal{Y}}_i$ by the formula
\[
\widetilde{S_{i}}(Y_{j,a}) = \delta_{ij} Y_{i,a}S_{i,a},
\]
and the Leibniz rule: $\widetilde{S_{i}}(ab) = b\widetilde{S_{i}}(a) + a\widetilde{S_{i}}(b)$. By definition, we have
\[
\widetilde{S_{i}}(Y^{-1}_{j,a}) = -\delta_{ij} Y^{-1}_{i,a} S_{i,a}.
\]

Let $S_{i} :\mathcal Y \rightarrow {\mathcal Y}_{i}$ be the composition of $\widetilde{S_{i}}$ and the canonical projection $\pi_{i}: \widetilde{\mathcal{Y}}_i \rightarrow {\mathcal Y}_{i}$. The $S_{i}$ is called the $i$-th screening operator.

The following two propositions will be very useful in the proof of our theorem.

\begin{proposition}[{\cite[Proposition 5.2]{FM01}}]\label{Kernel}
The kernel of $S_{i} : \mathcal Y \rightarrow {\mathcal Y}_{i}$ equals
 \begin{equation*}
 \mathbb{Z}[Y^{\pm 1}_{j,a}]_{j\neq i,a\in \mathbb{C}^{\times}}\otimes \mathbb{Z}[Y_{i,b} + Y_{i,b}A^{-1}_{i,bq^{d_i}}]_{b\in \mathbb{C}^{\times}}.
\end{equation*}
\end{proposition}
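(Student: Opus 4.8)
The plan is to prove the two inclusions separately, the forward one being routine and the reverse one carrying essentially all of the difficulty.

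First I would record a structural fact: since $\pi_i$ is a map of $\mathcal{Y}$-modules, $S_i=\pi_i\circ\widetilde{S_i}$ inherits the Leibniz rule, $S_i(ab)=bS_i(a)+aS_i(b)$, so $\ker S_i$ is a subring of $\mathcal{Y}$. Because $\widetilde{S_i}(Y_{j,a}^{\pm1})=0$ for $j\neq i$, the subring $\mathcal{Y}':=\mathbb{Z}[Y_{j,a}^{\pm1}]_{j\neq i,\,a}$ lies in $\ker S_i$ and $S_i$ is $\mathcal{Y}'$-linear. It then remains to check that each binomial $z_b:=Y_{i,b}+Y_{i,b}A_{i,bq^{d_i}}^{-1}$ is annihilated. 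Writing $A_{i,bq^{d_i}}=Y_{i,bq^{2d_i}}Y_{i,b}\cdot u$ with $u\in\mathcal{Y}'$, one has $Y_{i,b}A_{i,bq^{d_i}}^{-1}=Y_{i,bq^{2d_i}}^{-1}u^{-1}$, so applying the derivation $\widetilde{S_i}$ (which kills $u^{-1}$) gives
\[
\widetilde{S_i}(z_b)=Y_{i,b}S_{i,b}-Y_{i,b}A_{i,bq^{d_i}}^{-1}S_{i,bq^{2d_i}}.
\]
Projecting by $\pi_i$ and using the defining relation $S_{i,bq^{2d_i}}=A_{i,bq^{d_i}}S_{i,b}$ of $\mathcal{Y}_i$ (the relation $S_{i,xq_i^2}=A_{i,xq_i}S_{i,x}$ with $x=b$, $q_i=q^{d_i}$) collapses the two terms, so $S_i(z_b)=0$. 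As $\ker S_i$ is a subring containing $\mathcal{Y}'$ and every $z_b$, the forward inclusion follows.

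For the reverse inclusion I would reduce to a one-node, single-coset problem. View $\mathcal{Y}$ as the Laurent ring $\mathcal{Y}'[Y_{i,x}^{\pm1}]_{x}$; since the quotient relation only links $S_{i,x}$ with $S_{i,xq_i^2}$, the operator $S_i$ decouples over the cosets of $\mathbb{C}^\times/q_i^{2\mathbb{Z}}$, and $\ker S_i$ is the product over cosets of the single-coset kernels (the variables attached to distinct cosets being algebraically independent). Fixing a coset and relabelling $y_k:=Y_{i,\alpha q_i^{2k}}$, the image of $S_{i,\alpha q_i^{2k}}$ in $\mathcal{Y}_i$ is $\mu_k\,\overline{S_{i,\alpha}}$ with $\mu_0=1$ and $\mu_{k+1}=A_{i,\alpha q_i^{2k+1}}\mu_k$; after this identification $S_i$ becomes a genuine derivation $D$ on $\mathcal{Y}'[y_k^{\pm1}]_k$ with $D(y_k)=\mu_k y_k$, and the claim becomes $\ker D=\mathcal{Y}'[z_k]_k$, where $z_k=y_k+c_k y_{k+1}^{-1}$ with $c_k=u^{-1}\in\mathcal{Y}'$ (the precise value of $c_k$ forced above is exactly what makes $D(z_k)=0$).

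The core, and the main obstacle, is proving $\ker D\subseteq\mathcal{Y}'[z_k]_k$. I would induct on the length of the finite window of indices $k$ on which an element $f\in\ker D$ depends. Peeling off the top variable $y_r$, write $f=\sum_n f_n y_r^n$ with $f_n$ in the Laurent ring of the lower variables, and split $\mu_r=y_r\nu_r$ with $\nu_r$ invertible and independent of $y_r$. Then $D(f)=0$ reads, coefficient by coefficient in $y_r$, as the triangular recursion $D(f_m)+(m-1)\nu_r f_{m-1}=0$. The top equation forces the leading power of $y_r$ to be $\le 0$, and the recursion determines every negative-power coefficient from $f_0$; since $f$ is a Laurent polynomial this chain must terminate. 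I would then use $z_{r-1}=y_{r-1}+c_{r-1}y_r^{-1}$, whose only $y_r$-contribution is $y_r^{-1}$, to cancel the most negative power of $y_r$ by subtracting a suitable $\mathcal{Y}'[y_k]_{k<r}$-multiple of a power of $z_{r-1}$, strictly shortening the window; iterating removes the $y_r$-dependence, leaving an element of $\ker(D_{<r})$ to which the inductive hypothesis applies. The delicate points are verifying that the recursion terminates and that the subtractions can always be carried out within the allowed subring; managing the $\mathcal{Y}'$-valued coefficients $\mu_k,\nu_r,c_k$ throughout (rather than the scalars of the bare $\mathfrak{sl}_2$ case of \cite{FR98}) is what makes this bookkeeping the crux of the argument.
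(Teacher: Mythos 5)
The paper does not prove this proposition; it is imported verbatim as \cite[Proposition 5.2]{FM01}, so there is no internal proof to compare against. Judged on its own, your outline is correct and follows essentially the route of Frenkel--Reshetikhin and Frenkel--Mukhin: the forward inclusion by the Leibniz rule together with the computation $S_i\bigl(Y_{i,b}+Y_{i,b}A^{-1}_{i,bq^{d_i}}\bigr)=Y_{i,b}S_{i,b}-Y_{i,b}A^{-1}_{i,bq^{d_i}}S_{i,bq^{2d_i}}=0$ in $\mathcal{Y}_i$, and the reverse inclusion by decoupling over the $q_i^{2\mathbb{Z}}$-strings in $\mathbb{C}^\times$ and running a leading-term induction in the top variable of each string. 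Your key computations check out: with $A_{i,\alpha q_i^{2k+1}}=y_{k+1}y_k u_k$ one indeed gets $D(y_r)=\nu_r y_r^2$ with $\nu_r$ an invertible monomial independent of $y_r$, hence the coefficient recursion $D(f_m)+(m-1)\nu_r f_{m-1}=0$, the bound $\deg_{y_r}f\le 0$, and the fact that the bottom coefficient $f_{-N}$ lies in $\ker D$ (so that subtracting $f_{-N}c_{r-1}^{-N}z_{r-1}^N$ stays inside the kernel and strictly raises the bottom $y_r$-degree). The only points I would ask you to write out are the ones you flag yourself: that the integer divisions $f_{m-1}=\tfrac{1}{m-1}\nu_r^{-1}D(f_m)$ cause no trouble over $\mathbb{Z}$ because $f_{m-1}$ is already known to lie in the ring, and that the intersection of the single-string kernels is the stated tensor product (algebraic independence of the variables attached to distinct strings). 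Neither is a gap in the idea, only in the bookkeeping.
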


\begin{proposition}[{\cite[Corollary 5.7]{FM01}}] \label{isomorphism}
The image of the $q$-character homomorphism $\chi_q$ equals the intersection of the kernels of the screening operators $S_i$, with $i\in I$, equivalently,
\[
\chi_{q}: Rep(U_q(\widehat{\mathfrak{g}}))\rightarrow \bigcap_{i\in I} ker S_{i}
\]
is a ring isomorphism.
\end{proposition}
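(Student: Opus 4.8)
The plan is to prove the two displayed assertions simultaneously by establishing that the injective ring homomorphism $\chi_q$ maps $\mathrm{Rep}(U_q(\widehat{\mathfrak g}))$ \emph{onto} $\mathfrak K:=\bigcap_{i\in I}\ker S_i$. Injectivity of $\chi_q$ is already recorded, so the task splits into the two inclusions $\operatorname{Im}(\chi_q)\subseteq\mathfrak K$ and $\mathfrak K\subseteq\operatorname{Im}(\chi_q)$. Throughout I will use that both sides are free $\mathbb Z$-modules, that $\{[L(m)]\}_{m\in\mathcal P^+}$ is a $\mathbb Z$-basis of $\mathrm{Rep}(U_q(\widehat{\mathfrak g}))$, and that by (\ref{mathcal{Q}}) one has $\chi_q([L(m)])=m\bigl(1+\sum_p M_p\bigr)$ with each $M_p\in\mathcal Q^-$, so that $m$ is the unique maximal monomial of $\chi_q([L(m)])$ and occurs there with coefficient $1$.

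First I would prove $\operatorname{Im}(\chi_q)\subseteq\mathfrak K$, i.e.\ $S_i\circ\chi_q=0$ for every $i\in I$. Since $\widetilde{S_i}$ obeys the Leibniz rule into a $\mathcal Y$-module, $\ker S_i$ is a subring of $\mathcal Y$; as $\mathrm{Rep}(U_q(\widehat{\mathfrak g}))$ is generated as a ring by the classes of the fundamental modules, it suffices to check $\chi_q([L(Y_{j,a})])\in\ker S_i$. The mechanism is the restriction of the module to the quantum affine $\mathfrak{sl}_2$ attached to the node $i$ (generated by $x^{\pm}_{i,r}, h_{i,m}, k_i$): the $\ell$-weights organize into $U_{q_i}(\widehat{\mathfrak{sl}_2})$-strings, and on each string the contribution to the $q$-character lies, relative to the spectator variables $Y^{\pm1}_{j,b}$ with $j\neq i$, in the subring generated by the binomials $Y_{i,b}+Y_{i,b}A^{-1}_{i,bq^{d_i}}$. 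By Proposition \ref{Kernel} this subring is precisely $\ker S_i$, whence $\chi_q([L(Y_{j,a})])\in\ker S_i$. Intersecting over $i\in I$ gives the inclusion.

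For the reverse inclusion, the crux is the maximality lemma: \emph{every nonzero $P\in\mathfrak K$ has a maximal monomial that is dominant.} I would argue by contradiction. Let $M$ be a monomial of $P$ maximal for $\leq$, and suppose $Y_{i,a}^{-1}$ divides $M$ for some $i,a$. Writing $\widetilde{S_i}(P)=\sum_a P_a S_{i,a}$ and using the relations $S_{i,xq_i^{2}}=A_{i,xq_i}S_{i,x}$ that define $\mathcal Y_i$, the condition $S_i(P)=0$ forces, orbit by orbit, a telescoping relation among the $P_{aq_i^{2t}}$. Choosing $a$ at the top of its $i$-string (so $Y_{i,aq_i^{2t}}$ does not occur in $M$ for $t\geq1$), the contribution of $M$ to the $S_{i,a}$-component can only be cancelled by monomials differing from $M$ by a positive power of $A_{i,\cdot}$; such a cancelling monomial then has the form $M\,A_{i,c}>M$ and must genuinely occur in $P$, contradicting maximality of $M$. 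Hence $M$ is dominant. Granting the lemma I would finish by descending induction on the finite multiset of dominant monomials of $P$, ordered by $\leq$ (using the standard finiteness that only finitely many dominant monomials lie below a given one). Choose a maximal, hence dominant, monomial $m$ of $P$ with coefficient $c$. Since $\chi_q([L(m)])\in\mathfrak K$ by the first inclusion and equals $m$ plus monomials strictly below $m$, the element $P':=P-c\,\chi_q([L(m)])$ again lies in $\mathfrak K$, has vanishing coefficient at $m$, and leaves unchanged every coefficient at a monomial that is $\geq m$ or incomparable to $m$. Thus the maximal dominant monomials strictly decrease, the induction terminates, and it exhibits $P$ as a $\mathbb Z$-linear combination of the $\chi_q([L(m)])$; that is, $P\in\operatorname{Im}(\chi_q)$. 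Combined with injectivity this yields the asserted ring isomorphism.

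I expect the main obstacle to be the maximality lemma, where the abstract membership $P\in\ker S_i$ must be converted into the combinatorial statement that a non-dominant maximal monomial is always dominated by a strictly larger monomial of $P$. The delicate point is controlling cancellations in the screening computation: a priori the contribution of $M$ to $S_i(P)$ could be offset by several monomials at once, and one must combine the maximality of $M$ with a careful choice of $a$ within its $i$-string—equivalently, the full $U_{q_i}(\widehat{\mathfrak{sl}_2})$-string structure underlying Proposition \ref{Kernel}—to locate a genuine monomial $M\,A_{i,c}>M$ in $P$. Everything else is either formal (the Leibniz reduction and the subring property) or a routine triangular-basis argument.
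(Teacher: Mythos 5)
The first thing to say is that the paper contains no proof of this proposition: it is quoted verbatim from \cite[Corollary 5.7]{FM01}, and the text immediately below it only records the history (Frenkel--Reshetikhin for $\mathfrak{sl}_2$, Frenkel--Mukhin in general). So there is no in-paper argument to measure your proposal against; what you have written is a reconstruction of the Frenkel--Mukhin proof itself. In architecture it is the right reconstruction: the inclusion $\operatorname{Im}(\chi_q)\subseteq\bigcap_i\ker S_i$ by reducing to fundamental modules (using that $\ker S_i$ is a subring and that the Grothendieck ring is generated by fundamental classes) together with the $U_{q_i}(\widehat{\mathfrak{sl}_2})$-restriction and Proposition \ref{Kernel}; the reverse inclusion by the ``a maximal monomial of a nonzero element of $\bigcap_i\ker S_i$ is dominant'' lemma followed by the triangular induction subtracting $c\,\chi_q([L(m)])$, whose termination rests on the finiteness of dominant monomials below a given one. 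All of that matches the source and is correctly assembled.

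The one genuine gap is exactly where you predict it, and your sketch does not close it. In the telescoped identity expressing the vanishing of the coefficient of $S_{i,a_0}$ (with $a_0$ a bottom representative of the $q_i^{2}$-string), the term contributed by the pair $(M,t_0)$ is the monomial $M\,A_{i,a_0q_i}\cdots A_{i,a_0q_i^{2t_0-1}}$. It can be cancelled by a pair $(M',t')$ with $t'<t_0$, which forces $M'=M\cdot(\text{positive product of }A_{i,\cdot})>M$ and gives your contradiction; but it can equally be cancelled by a pair with $t'>t_0$, for which $M'=M\cdot(\text{negative product of }A_{i,\cdot})<M$ and the relevant exponent $u_{i,a_0q_i^{2t'}}(M')=-1$ is automatically nonzero, so this case cannot be excluded by the maximality of $M$. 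Hence your assertion that ``the contribution of $M$ can only be cancelled by monomials differing from $M$ by a positive power of $A_{i,\cdot}$'' is false as stated, and the maximality lemma --- the crux of surjectivity --- remains unproved. The repair in \cite{FM01} is to abandon the raw screening computation and instead exploit the tensor decomposition of $\ker S_i$ in Proposition \ref{Kernel}: writing $P$ uniquely as a polynomial in the elements $Y_{i,b}+Y_{i,b}A^{-1}_{i,bq^{d_i}}$ over $\mathbb{Z}[Y^{\pm1}_{j,a}]_{j\neq i}$ and comparing leading terms of that expansion (every $A_{i,c}$ having weight $\alpha_i$), one produces an honest monomial $MA_{i,c}$ of $P$. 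Until that argument, or an equivalent one, is supplied, the proposal establishes only the easy inclusion.
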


Proposition \ref{isomorphism} was conjectured by Frenkel and Reshetikhin \cite{FR98}, and they proved it for $\mathfrak{g}=\mathfrak{sl}_2$. Subsequently, Frenkel and Mukhin \cite{FM01} proved it for general $\mathfrak{g}$.

\section{A combinatorial model of type $D_n$}\label{Paths, corners, and moves}

From now on, let $\mathfrak{g}$ be the simple Lie algebra of type $D_n$, with $n\geq 4$. Inspired by Mukhin-Young's combinatorial model of type $B_{n-1}$ \cite{MY12a}, we introduce a combinatorial model of type $D_n$.

\subsection{Paths}\label{defintion of paths}

Let $N=2n-2$ and $\mathscr{S}=\{i \in \mathbb{Z} \mid 0\leq i\leq N\}$. Define a map $\overline{\cdot}$ from $\mathscr{S}$ to the power set of $\{0,1,\ldots,n\}$ such that
\begin{align*}
\overline{i} =
\begin{cases}
\{ i \} & \text{if $0 \leq i \leq n-2$}, \\
\{ n-1, n\} & \text{if $i=n-1$}, \\
\{ N-i \} & \text{if $n \leq i \leq N$}.
\end{cases}
\end{align*}
Subsequently, for a single element set $\overline{i}$, we denote by its element the set $\overline{i}$.

We define a subset $\mathcal{X}$ of $\mathscr{S} \times \mathbb{Z}$ as follows:
\begin{align*}
\mathcal{X} :=  \{ (i,k) \in \mathscr{S} \times \mathbb{Z} \mid  i-k\equiv 1  \hspace{-0.3cm} \pmod  2 \}.
\end{align*}


A path is a finite sequence of points in the plane $\mathbb{R}^{2}$. For each $(n-1, k)\in \mathcal{X}$, we define a set $\mathscr{P}_{n-1,k} \subset \mathcal{X}$ of paths in the following way:
\begin{align*}
\mathscr{P}_{n-1,k}=\{ & ((0,y_{0}),(1,y_{1}),\ldots,(n-2,y_{n-2}),(n-1,y_{n-1})) \mid  y_{0}=n-1+k, \\
&\text{ and } y_{i+1}-y_{i}\in \{1,-1\}, \  0\leq i\leq n-2\}.
\end{align*}

To our needs, let
\begin{align*}
\widehat{\mathscr{P}}_{n-1,k}=\{ & ((N,y_{0}),(N-1,y_{1}),\ldots,(n,y_{n-2}),(n-1,y_{n-1}))\mid y_{0}=n-1+k, \\
&\text{ and } y_{i+1}-y_{i}\in \{1,-1\}, \  0\leq i\leq n-2\}.
\end{align*}
So points in $\widehat{\mathscr{P}}_{n-1,k}$ are symmetric with points in $\mathscr{P}_{n-1,k}$ with respect to $x=n-1$ axis.

For $(i,k)\in \mathcal{X}$, where $i\in\{1,2,\ldots,n-2\}$, we define a set $\mathscr{P}_{i,k} \subset \mathcal{X}$ of paths in the following way:
\begin{align*}
\mathscr{P}_{i,k}=\{ & (a_0,a_1,\ldots, a_{n-1}, \overline{a}_{n-1}, \overline{a}_{n-2}, \ldots, \overline{a}_0)\mid (a_0,a_1,\ldots, a_{n-1})\in \mathscr{P}_{n-1,k-(n-i-1)}, \\
& (\overline{a}_{n-1}, \overline{a}_{n-2}, \ldots, \overline{a}_0) \in  \widehat{\mathscr{P}}_{n-1,k+(n-i-1)},   \\
& \, a_{n-1}-\overline{a}_{n-1}=(0,y),  \text{ where } y\geq 0 \}.
\end{align*}
For simplicity, we assume without loss of generality that each path in $\mathscr{P}_{i,k}$ has the following form:
\begin{align}\label{path form in pik}
p=((0,y_{0}),(1,y_{1}),\ldots,(j,y_{j}),\ldots,(n-1,y_{n-1}), (n-1,y'_{n-1}), (n,y_n),\ldots, (N,y_N)).
\end{align}

To illustrate our paths, we give examples of $\mathscr{P}_{6,1}$ and $\mathscr{P}_{9,0}$ for $n=10$, see Figure \ref{path in grid1} and Figure \ref{path in grid2}. In our figures, we connect consecutive points of a path by line segments, for illustrative purposes only. We write $(j, \ell) \in p$ if $(j, \ell)$ is a point in a path $p$.

\begin{figure}
\resizebox{1.0\width}{1.0\height}{
\begin{minipage}[b]{0.4\linewidth}
\centerline{
\begin{tikzpicture}[scale=.5]
\draw[help lines, color=gray!60, thin] (0,0) grid (9,18);
\begin{scope}[thick, every node/.style={sloped,allow upside down}]
\draw (0,9)--node {\midarrow}(1,10)--node {\midarrow}(2,11)--node {\midarrow}(3,12)--node {\midarrow}(4,13)--node {\midarrow}(5,14)--node {\midarrow}(6,15)--node {\midarrow}(7,16)--node {\midarrow}(8,17)--node {\midarrow}(9,18);
\draw (1,8)--node {\midarrow}(2,9)--node {\midarrow}(3,10)--node {\midarrow}(4,11)--node {\midarrow}(5,12)--node {\midarrow}(6,13)--node {\midarrow}(7,14)--node {\midarrow}(8,15)--node {\midarrow}(9,16);
\draw (2,7)--node {\midarrow}(3,8)--node {\midarrow}(4,9)--node {\midarrow}(5,10)--node {\midarrow}(6,11)--node {\midarrow}(7,12)--node {\midarrow}(8,13)--node {\midarrow}(9,14);
\draw (3,6)--node {\midarrow}(4,7)--node {\midarrow}(5,8)--node {\midarrow}(6,9)--node {\midarrow}(7,10)--node {\midarrow}(8,11)--node {\midarrow}(9,12);
\draw (4,5)--node {\midarrow}(5,6)--node {\midarrow}(6,7)--node {\midarrow}(7,8)--node {\midarrow}(8,9)--node {\midarrow}(9,10);
\draw (5,4)--node {\midarrow}(6,5)--node {\midarrow}(7,6)--node {\midarrow}(8,7)--node {\midarrow}(9,8);
\draw (6,3)--node {\midarrow}(7,4)--node {\midarrow}(8,5)--node {\midarrow}(9,6);
\draw (7,2)--node {\midarrow}(8,3)--node {\midarrow}(9,4);
\draw (8,1)--node {\midarrow}(9,2);
\draw (0,9)--node {\midarrow}(1,8)--node {\midarrow}(2,7)--node {\midarrow}(3,6)--node {\midarrow}(4,5)--node {\midarrow}(5,4)--node {\midarrow}(6,3)--node {\midarrow}(7,2)--node {\midarrow}(8,1)--node {\midarrow}(9,0);
\draw (1,10)--node {\midarrow}(2,9)--node {\midarrow}(3,8)--node {\midarrow}(4,7)--node {\midarrow}(5,6)--node {\midarrow}(6,5)--node {\midarrow}(7,4)--node {\midarrow}(8,3)--node {\midarrow}(9,2);
\draw (2,11)--node {\midarrow}(3,10)--node {\midarrow}(4,9)--node {\midarrow}(5,8)--node {\midarrow}(6,7)--node {\midarrow}(7,6)--node {\midarrow}(8,5)--node {\midarrow}(9,4);
\draw (3,12)--node {\midarrow}(4,11)--node {\midarrow}(5,10)--node {\midarrow}(6,9)--node {\midarrow}(7,8)--node {\midarrow}(8,7)--node {\midarrow}(9,6);
\draw (4,13)--node {\midarrow}(5,12)--node {\midarrow}(6,11)--node {\midarrow}(7,10)--node {\midarrow}(8,9)--node {\midarrow}(9,8);
\draw (5,14)--node {\midarrow}(6,13)--node {\midarrow}(7,12)--node {\midarrow}(8,11)--node {\midarrow}(9,10);
\draw (6,15)--node {\midarrow}(7,14)--node {\midarrow}(8,13)--node {\midarrow}(9,12);
\draw (7,16)--node {\midarrow}(8,15)--node {\midarrow}(9,14);
\draw (8,17)--node {\midarrow}(9,16);
\end{scope}
\node at (0, 18.3) {\scalebox{0.55}{\scri{$0$}}};
\node at (1, 18.3) {\scalebox{0.55}{\scri{$1$}}};
\node at (2, 18.3) {\scalebox{0.55}{\scri{$2$}}};
\node at (3, 18.3) {\scalebox{0.55}{\scri{$3$}}};
\node at (4, 18.3) {\scalebox{0.55}{\scri{$4$}}};
\node at (5, 18.3) {\scalebox{0.55}{\scri{$5$}}};
\node at (6, 18.3) {\scalebox{0.55}{\scri{$6$}}};
\node at (7, 18.3) {\scalebox{0.55}{\scri{$7$}}};
\node at (8, 18.3) {\scalebox{0.55}{\scri{$8$}}};
\node at (9, 18.3) {\scalebox{0.55}{\scri{$9$}}};
\node at (-0.5, 18) {\scalebox{0.55}{\scri{$0$}}};
\node at (-0.5, 17) {\scalebox{0.55}{\scri{$1$}}};
\node at (-0.5, 16) {\scalebox{0.55}{\scri{$2$}}};
\node at (-0.5, 15) {\scalebox{0.55}{\scri{$3$}}};
\node at (-0.5, 14) {\scalebox{0.55}{\scri{$4$}}};
\node at (-0.5, 13) {\scalebox{0.55}{\scri{$5$}}};
\node at (-0.5, 12) {\scalebox{0.55}{\scri{$6$}}};
\node at (-0.5, 11) {\scalebox{0.75}{\scri{$7$}}};
\node at (-0.5, 10) {\scalebox{0.75}{\scri{$8$}}};
\node at (-0.5, 9) {\scalebox{0.55}{\scri{$9$}}};
\node at (-0.5, 8) {\scalebox{0.55}{\scri{$10$}}};
\node at (-0.5, 7) {\scalebox{0.55}{\scri{$11$}}};
\node at (-0.5, 6) {\scalebox{0.55}{\scri{$12$}}};
\node at (-0.5, 5) {\scalebox{0.55}{\scri{$13$}}};
\node at (-0.5, 4) {\scalebox{0.55}{\scri{$14$}}};
\node at (-0.5, 3) {\scalebox{0.55}{\scri{$15$}}};
\node at (-0.5, 2) {\scalebox{0.75}{\scri{$16$}}};
\node at (-0.5, 1) {\scalebox{0.75}{\scri{$17$}}};
\node at (-0.5, 0) {\scalebox{0.75}{\scri{$18$}}};
\end{tikzpicture}}
\end{minipage}}
\caption{All paths in $\mathscr{P}_{9,0}$ for $n=10$.}\label{path in grid1}
\end{figure}
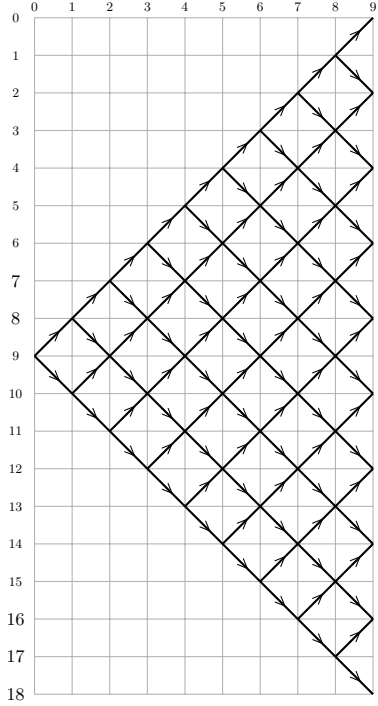

\begin{figure}
\resizebox{1.0\width}{1.0\height}{
\begin{minipage}[b]{0.4\linewidth}
\centerline{
\begin{tikzpicture}[scale=.5]
\draw[help lines, color=gray!60, thin] (0,1) grid (18,19);
\begin{scope}[thick, every node/.style={sloped,allow upside down}]
\draw (0,13)--node {\midarrow}(1,14)--node {\midarrow}(2,15)--node {\midarrow}(3,16)--node {\midarrow}(4,17)--node {\midarrow}(5,18)--node {\midarrow}(6,19);
\draw (1,12)--node {\midarrow}(2,13)--node {\midarrow}(3,14)--node {\midarrow}(4,15)--node {\midarrow}(5,16)--node {\midarrow}(6,17)--node {\midarrow}(7,18);
\draw (2,11)--node {\midarrow}(3,12)--node {\midarrow}(4,13)--node {\midarrow}(5,14)--node {\midarrow}(6,15)--node {\midarrow}(7,16)--node {\midarrow}(8,17);
\draw (3,10)--node {\midarrow}(4,11)--node {\midarrow}(5,12)--node {\midarrow}(6,13)--node {\midarrow}(7,14)--node {\midarrow}(8,15)--node {\midarrow}(9,16);
\draw (4,9)--node {\midarrow}(5,10)--node {\midarrow}(6,11)--node {\midarrow}(7,12)--node {\midarrow}(8,13)--node {\midarrow}(9,14)--node {\midarrow}(10,15);
\draw(5,8)--node {\midarrow}(6,9)--node {\midarrow}(7,10)--node {\midarrow}(8,11)--node {\midarrow}(9,12)--node {\midarrow}(10,13)--node {\midarrow}(11,14);
\draw(6,7)--node {\midarrow}(7,8)--node {\midarrow}(8,9)--node {\midarrow}(9,10)--node {\midarrow}(10,11)--node {\midarrow}(11,12)--node {\midarrow}(12,13);
\draw(7,6)--node {\midarrow}(8,7)--node {\midarrow}(9,8)--node {\midarrow}(10,9)--node {\midarrow}(11,10)--node {\midarrow}(12,11)--node {\midarrow}(13,12);
\draw(8,5)--node {\midarrow}(9,6)--node {\midarrow}(10,7)--node {\midarrow}(11,8)--node {\midarrow}(12,9)--node {\midarrow}(13,10)--node {\midarrow}(14,11);
\draw(9,4)--node {\midarrow}(10,5)--node {\midarrow}(11,6)--node {\midarrow}(12,7)--node {\midarrow}(13,8)--node {\midarrow}(14,9)--node {\midarrow}(15,10);
\draw(10,3)--node {\midarrow}(11,4)--node {\midarrow}(12,5)--node {\midarrow}(13,6)--node {\midarrow}(14,7)--node {\midarrow}(15,8)--node {\midarrow}(16,9);
\draw(11,2)--node {\midarrow}(12,3)--node {\midarrow}(13,4)--node {\midarrow}(14,5)--node {\midarrow}(15,6)--node {\midarrow}(16,7)--node {\midarrow}(17,8);
\draw(12,1)--node {\midarrow}(13,2)--node {\midarrow}(14,3)--node {\midarrow}(15,4)--node {\midarrow}(16,5)--node {\midarrow}(17,6)--node {\midarrow}(18,7);
\draw(6,19)--node {\midarrow}(7,18)--node {\midarrow}(8,17)--node {\midarrow}(9,16)--node {\midarrow}(10,15)--node {\midarrow}(11,14)--node {\midarrow}(12,13)--node {\midarrow}(13,12)--node {\midarrow}(14,11)--node {\midarrow}(15,10)--node {\midarrow}(16,9)--node {\midarrow}(17,8)--node {\midarrow}(18,7);
\draw(5,18)--node {\midarrow}(6,17)--node {\midarrow}(7,16)--node {\midarrow}(8,15)--node {\midarrow}(9,14)--node {\midarrow}(10,13)--node {\midarrow}(11,12)--node {\midarrow}(12,11)--node {\midarrow}(13,10)--node {\midarrow}(14,9)--node {\midarrow}(15,8)--node {\midarrow}(16,7)--node {\midarrow}(17,6);
\draw(4,17)--node {\midarrow}(5,16)--node {\midarrow}(6,15)--node {\midarrow}(7,14)--node {\midarrow}(8,13)--node {\midarrow}(9,12)--node {\midarrow}(10,11)--node {\midarrow}(11,10)--node {\midarrow}(12,9)--node {\midarrow}(13,8)--node {\midarrow}(14,7)--node {\midarrow}(15,6)--node {\midarrow}(16,5);
\draw(3,16)--node {\midarrow}(4,15)--node {\midarrow}(5,14)--node {\midarrow}(6,13)--node {\midarrow}(7,12)--node {\midarrow}(8,11)--node {\midarrow}(9,10)--node {\midarrow}(10,9)--node {\midarrow}(11,8)--node {\midarrow}(12,7)--node {\midarrow}(13,6)--node {\midarrow}(14,5)--node {\midarrow}(15,4);
\draw(2,15)--node {\midarrow}(3,14)--node {\midarrow}(4,13)--node {\midarrow}(5,12)--node {\midarrow}(6,11)--node {\midarrow}(7,10)--node {\midarrow}(8,9)--node {\midarrow}(9,8)--node {\midarrow}(10,7)--node {\midarrow}(11,6)--node {\midarrow}(12,5)--node {\midarrow}(13,4)--node {\midarrow}(14,3);
\draw(1,14)--node {\midarrow}(2,13)--node {\midarrow}(3,12)--node {\midarrow}(4,11)--node {\midarrow}(5,10)--node {\midarrow}(6,9)--node {\midarrow}(7,8)--node {\midarrow}(8,7)--node {\midarrow}(9,6)--node {\midarrow}(10,5)--node {\midarrow}(11,4)--node {\midarrow}(12,3)--node {\midarrow}(13,2);
\draw(0,13)--node {\midarrow}(1,12)--node {\midarrow}(2,11)--node {\midarrow}(3,10)--node {\midarrow}(4,9)--node {\midarrow}(5,8)--node {\midarrow}(6,7)--node {\midarrow}(7,6)--node {\midarrow}(8,5)--node {\midarrow}(9,4)--node {\midarrow}(10,3)--node {\midarrow}(11,2)--node {\midarrow}(12,1);
\draw(9,4)--node {\midarrow}(9,6)--node {\midarrow}(9,8)--node {\midarrow}(9,10)--node {\midarrow}(9,12)--node {\midarrow}(9,14)--node {\midarrow}(9,16);
\end{scope}
\node at (0, 19.4) {\scalebox{0.55}{\scri{$0$}}};
\node at (1, 19.4) {\scalebox{0.55}{\scri{$1$}}};
\node at (2, 19.4) {\scalebox{0.55}{\scri{$2$}}};
\node at (3, 19.4) {\scalebox{0.55}{\scri{$3$}}};
\node at (4, 19.4) {\scalebox{0.55}{\scri{$4$}}};
\node at (5, 19.4) {\scalebox{0.55}{\scri{$5$}}};
\node at (6, 19.4) {\scalebox{0.55}{\scri{$6$}}};
\node at (7, 19.4) {\scalebox{0.55}{\scri{$7$}}};
\node at (8, 19.4) {\scalebox{0.55}{\scri{$8$}}};
\node at (9, 19.4) {\scalebox{0.55}{\scri{$9$}}};
\node at (10, 19.4) {\scalebox{0.55}{\scri{$10$}}};
\node at (11, 19.4) {\scalebox{0.55}{\scri{$11$}}};
\node at (12, 19.4) {\scalebox{0.55}{\scri{$12$}}};
\node at (13, 19.4) {\scalebox{0.55}{\scri{$13$}}};
\node at (14, 19.4) {\scalebox{0.55}{\scri{$14$}}};
\node at (15, 19.4) {\scalebox{0.55}{\scri{$15$}}};
\node at (16, 19.4) {\scalebox{0.55}{\scri{$16$}}};
\node at (17, 19.4) {\scalebox{0.55}{\scri{$17$}}};
\node at (18, 19.4) {\scalebox{0.55}{\scri{$18$}}};
\node at (-0.5, 19) {\scalebox{0.55}{\scri{$1$}}};
\node at (-0.5, 18) {\scalebox{0.55}{\scri{$2$}}};
\node at (-0.5, 17) {\scalebox{0.55}{\scri{$3$}}};
\node at (-0.5, 16) {\scalebox{0.55}{\scri{$4$}}};
\node at (-0.5, 15) {\scalebox{0.55}{\scri{$5$}}};
\node at (-0.5, 14) {\scalebox{0.55}{\scri{$6$}}};
\node at (-0.5, 13) {\scalebox{0.55}{\scri{$7$}}};
\node at (-0.5, 12) {\scalebox{0.55}{\scri{$8$}}};
\node at (-0.5, 11) {\scalebox{0.55}{\scri{$9$}}};
\node at (-0.5, 10) {\scalebox{0.55}{\scri{$10$}}};
\node at (-0.5, 9) {\scalebox{0.55}{\scri{$11$}}};
\node at (-0.5, 8) {\scalebox{0.55}{\scri{$12$}}};
\node at (-0.5, 7) {\scalebox{0.55}{\scri{$13$}}};
\node at (-0.5, 6) {\scalebox{0.55}{\scri{$14$}}};
\node at (-0.5, 5) {\scalebox{0.55}{\scri{$15$}}};
\node at (-0.5, 4) {\scalebox{0.55}{\scri{$16$}}};
\node at (-0.5, 3) {\scalebox{0.55}{\scri{$17$}}};
\node at (-0.5, 2) {\scalebox{0.55}{\scri{$18$}}};
\node at (-0.5,1) {\scalebox{0.55}{\scri{$19$}}};
\end{tikzpicture}}
\end{minipage}}
\caption{All paths in $\mathscr{P}_{6,1}$ for $n=10$.}\label{path in grid2}
\end{figure}
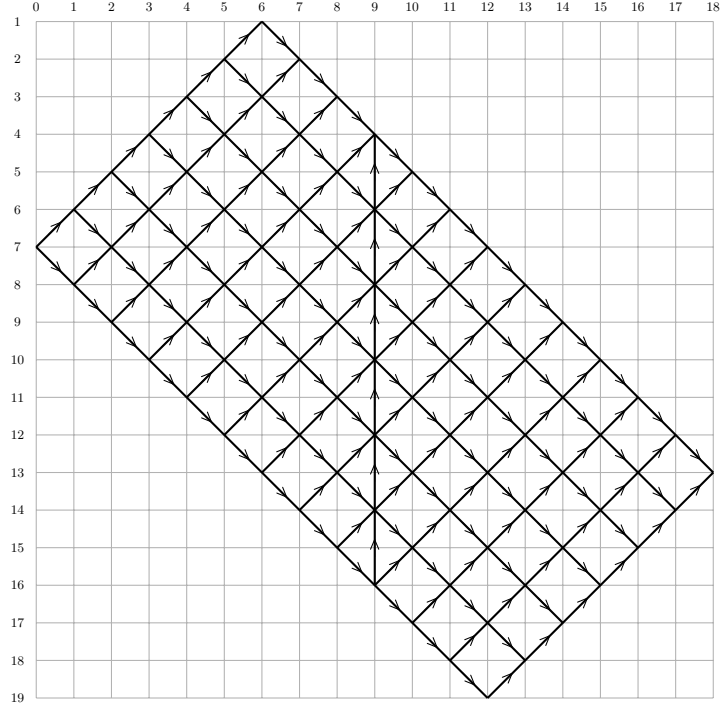

\subsection{Corners}

The sets $C_{p}^{\pm}$ of upper and lower corners of a path $p \in \mathscr{P}_{n-1,k}$ are defined as follows:
\begin{align*}
C^{+}_{p}=\{(r,y_{r})\in p \mid  r \in \{1,\ldots,n-2\}, \ y_{r-1}=y_{r}+1=y_{r+1} \}
\\\sqcup\{(n-1,y_{n-1})\in p \mid  y_{n-1}+1=y_{n-2}\},\\
C^{-}_{p}=\{(r,y_{r})\in p \mid  r\in \{1,\ldots,n-2\}, \ y_{r-1}=y_{r}-1=y_{r+1} \}\\
\sqcup\{(n-1,y_{n-1})\in p \mid  y_{n-1}-1=y_{n-2}\}.
\end{align*}

The sets $C_{p}^{\pm}$ of upper and lower corners of a path $p \in \mathscr{P}_{i,k}$, with $1\leq i \leq n-2$, are defined as follows:
\begin{align*}
C^{+}_{p} & = \{(n-1,y_{n-1})\in p \mid  y_{n-1}+1=y_{n-2} \,\,\text{or}\,\, y_{n-1}+1=y_{n}\}\\
& \sqcup \{(r,y_{r})\in p \mid  r\in \mathscr{S}\backslash \{0,n-1,N\}, \ y_{r-1}=y_{r}+1=y_{r+1}\},\\
C^{-}_{p} & = \{(n-1,y_{n-1})\in p\mid y_{n-1}-1=y_{n-2} \,\,\text{or}\,\,  y_{n-1}-1=y_{n}\}\\
& \sqcup\{(r,y_{r})\in p\mid  r\in \mathscr{S}\backslash \{0,n-1,N\}, \ y_{r-1}=y_{r}-1=y_{r+1}\}.
\end{align*}

Let $(i,k)\in \mathcal{X}$. By the definition of paths, any path $p \in \mathscr{P}_{i,k}$ has at least one lower corner or upper corner, which is unique defined by its set of lower or upper corners. There exists a unique path without lower (respectively, upper) corners in $\mathscr{P}_{i,k}$. The path without lower (respectively, upper) corners is called the highest (resp. lowest) path, denoted by $p^{+}_{i,k}$ (respectively, $p^{-}_{i,k}$).

\subsection{Moves of paths}

\subsubsection{Lowering moves}
Let $(i,k)\in\mathcal{X}$. A path $p\in \mathscr{P}_{i,k}$ is said to be lowered at $(j,\ell)$ if and only if $(j,\ell-1) \in C^+_p$ and $(j,\ell+1) \not\in C^+_p$, see \cite[section 5.2]{MY12a}. We denote by $p\mathscr{A}^{-1}_{j,\ell}$ the new path obtained by lowering move on $p$ at $(j,\ell)$.

We define lowering moves on paths by case-by-case. Firstly, we define lowering moves on paths in $\mathscr{P}_{n-1,k}$. For any $p\in \mathscr{P}_{n-1,k}$, we assume without loss of generality that
\[
p=((0,y_{0}),(1,y_{1}),\ldots,(j,y_{j}),\ldots,(n-1,y_{n-1})).
\]

\begin{enumerate}[(i)]
\item If $(j,y_j)\in C^{+}_{p}$ for some $j<n-1$, then $(j,y_j+2) \not\in C^+_p$ follows automatically, and $y_{j-1}=y_{j}+1=y_{j+1}=\ell$. We define
\[
p\mathscr{A}^{-1}_{j,y_j+1}:=((0,y_{0}),\ldots,(j-1,y_{j-1}),(j,y_{j}+2),(j+1,y_{j+1}),\ldots,(n-1,y_{n-1})).
\]
Obviously, $p\mathscr{A}^{-1}_{j,y_j+1} \in \mathscr{P}_{n-1,k}$.

\item If $(n-1,y_{n-1})\in C^+_{p}$, then $y_{n-2}=y_{n-1}+1=\ell$. We define
\[
p\mathscr{A}^{-1}_{n-1,y_{n-1}+1}:=((0,y_{0}),(1,y_{1}),\ldots,(n-2,y_{n-2}),(n-1,y_{n-1}+2)) \in \mathscr{P}_{n-1,k}.
\]
\end{enumerate}
Pictorially, the lowering moves of a path are depicted in Figure \ref{lowering moves of a path}.
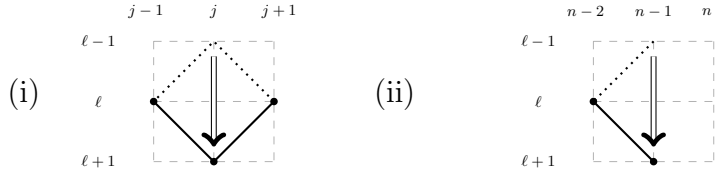
\begin{figure}[H]
\resizebox{1.0\width}{1.0\height}{
\begin{minipage}[b]{0.3\linewidth}
\centerline{
\text{(i) } \begin{tikzpicture}[baseline=0cm,scale=.8,yscale=-1]
\draw[help lines, color=gray!60, dashed] (-1,-1) grid (1,1);
\node at (-1, -1.5) {\scalebox{0.65}{\scri{$j-1$}}\,\,\,};
 \node at (0, -1.5) {\scalebox{0.65}{\scri{$j$}}};
 \node at (1, -1.5) {\scalebox{0.65}{\,\,\,\scri{$j+1$}}};
\node at (-2, -1) {\scalebox{0.65}{\,\,\,\scri{$\ell-1$}}};
\node at (-2, 0) {\scalebox{0.65}{\,\,\,\scri{$\ell$}}};
\node at (-2, 1) {\scalebox{0.65}{\,\,\,\scri{$\ell+1$}}};
\begin{scope}[every node/.style={minimum size=.1cm,inner sep=0mm,fill,circle}]
\draw[thick] (-1,0) node {} -- (0,1) node {} -- (1,0)node {} ;
\draw [thick,dotted] (1,0) -- (0,-1) -- (-1,0);
\end{scope}
\draw[double distance=.5mm,->,shorten <= 2mm,shorten >= 2mm] (0,-1) -- (0,1);
\end{tikzpicture}}
\end{minipage}
\quad\text{(ii)} \begin{minipage}[b]{0.3\linewidth}
\centerline{
\begin{tikzpicture}[baseline=0cm,scale=.8,yscale=-1]
\draw[help lines, color=gray!60, dashed] (-1,-1) grid (1,1);
\node at (-1, -1.5) {\scalebox{0.65}{\scri{$n-2$}}\,\,\,};
 \node at (0, -1.5) {\scalebox{0.65}{\scri{$n-1$}}};
 \node at (0.8, -1.5) {\scalebox{0.65}{\,\,\,\scri{$n$}}};
\node at (-2, -1) {\scalebox{0.65}{\,\,\,\scri{$\ell-1$}}};
\node at (-2, 0) {\scalebox{0.65}{\,\,\,\scri{$\ell$}}};
\node at (-2, 1) {\scalebox{0.65}{\,\,\,\scri{$\ell+1$}}};
\begin{scope}[every node/.style={minimum size=.1cm,inner sep=0mm,fill,circle}]
\draw[thick] (-1,0) node {} -- (0,1) node {} ;
\draw [thick,dotted](0,-1) -- (-1,0);
\end{scope}
\draw[double distance=.5mm,->,shorten <= 2mm,shorten >= 2mm] (0,-1) -- (0,1);
\end{tikzpicture}}
\end{minipage}}
\caption{Lowering moves of a path $p\in \mathscr{P}_{n-1,k}$.}\label{lowering moves of a path}
\end{figure}

We nextly define the lowering moves on paths in $\mathscr{P}_{i,k}$, with $i<n-1$. Assume without loss of generality that each path in $\mathscr{P}_{i,k}$ has the form (\ref{path form in pik}).

\begin{enumerate}[(i)]
\item If $(j,y_j)\in C^{+}_{p}$ for some $j\neq n-1$, then $y_{j-1}=y_{j}+1=y_{j+1}=\ell$, and we define
\[
p\mathscr{A}^{-1}_{j,y_j+1}:=((0,y_{0}),\ldots,(j-1,y_{j-1}),(j,y_{j}+2),(j+1,y_{j+1}),\ldots,(N,y_{N})).
\]
\item If $(n-1,y_{n-1})\in C^{+}_{p}$ and $\ell=y_{n-2}=y_{n-1}+1 \neq y_n$, we define
\[
p\mathscr{A}^{-1}_{n-1,y_{n-1}+1}:=((0,y_{0}),\ldots,(n-2,y_{n-2}),(n-1,y_{n-1}+2), (n-1,y'_{n-1}),\ldots, (N,y_N)).
\]
\item If $(n-1,y'_{n-1})\in C^{+}_{p}$, and $\ell=y_n=y'_{n-1}+1 \neq y_{n-2}$, we define
\[
p\mathscr{A}^{-1}_{n-1,y'_{n-1}+1}:=((0,y_{0}),\ldots,(n-1,y_{n-1}), (n-1,y'_{n-1}+2),(n,y_n),\ldots, (N,y_N)).
\]
\item If $(n-1,y_{n-1})\in C^{+}_{p}$, and $\ell=y_{n-2}= y_{n-1}+1 = y'_{n-1}+1 = y_{n}$, we define
\begin{align*}
p\mathscr{A}^{-1}_{n-1,y_{n-1}+1}:= & ((0,y_{0}),\ldots,(n-2,y_{n-2}),(n-1,y_{n-1}+2), (n-1,y'_{n-1}+2), \\
&  (n,y_n),\ldots, (N,y_N)).
\end{align*}
\end{enumerate}

Pictorially, the lowering moves are depicted in Figure \ref{lowering moves of a path 2}.
\begin{figure}[H]
\resizebox{1.0\width}{1.0\height}{
\begin{minipage}[b]{0.23\linewidth}
\centerline{
\begin{tikzpicture}[baseline=0cm,scale=.8,yscale=-1]
\draw[help lines, color=gray!60, dashed] (-1,-1) grid (1,1);
\node at (-1, -1.5) {\scalebox{0.65}{\scri{$j-1$}}\,\,\,};
 \node at (0, -1.5) {\scalebox{0.65}{\scri{$j$}}};
 \node at (1, -1.5) {\scalebox{0.65}{\,\,\,\scri{$j+1$}}};
\node at (-2, -1) {\scalebox{0.65}{\,\,\,\scri{$\ell-1$}}};
\node at (-2, 0) {\scalebox{0.65}{\,\,\,\scri{$\ell$}}};
\node at (-2, 1) {\scalebox{0.65}{\,\,\,\scri{$\ell+1$}}};
\node at (0, 2) {(i)};
\begin{scope}[every node/.style={minimum size=.1cm,inner sep=0mm,fill,circle}]
\draw[thick] (-1,0) node {} -- (0,1) node {} -- (1,0)node {} ;
\draw [thick,dotted] (1,0) -- (0,-1) -- (-1,0);
\end{scope}
\draw[double distance=.5mm,->,shorten <= 2mm,shorten >= 2mm] (0,-1) -- (0,1);
\end{tikzpicture}}
\end{minipage}
\begin{minipage}[b]{0.23\linewidth}
\centerline{
\begin{tikzpicture}[baseline=0cm,scale=.8,yscale=-1]
\draw[help lines, color=gray!60, dashed] (-1,-1) grid (1,1);
\node at (-1, -1.5) {\scalebox{0.65}{\scri{$n-2$}}\,\,\,};
 \node at (0, -1.5) {\scalebox{0.65}{\scri{$n-1$}}};
 \node at (1, -1.5) {\scalebox{0.65}{\,\,\,\scri{$n$}}};
\node at (-2, -1) {\scalebox{0.65}{\,\,\,\scri{$\ell-1$}}};
\node at (-2, 0) {\scalebox{0.65}{\,\,\,\scri{$\ell$}}};
\node at (-2, 1) {\scalebox{0.65}{\,\,\,\scri{$\ell+1$}}};
\node at (0, 2) {(ii)};
\begin{scope}[every node/.style={minimum size=.1cm,inner sep=0mm,fill,circle}]
\draw[thick] (-1,0) node {} -- (0,1) node {} ;
\draw [thick,dotted](0,-1) -- (-1,0);
\end{scope}
\draw[double distance=.5mm,->,shorten <= 2mm,shorten >= 2mm] (0,-1) -- (0,1);
\end{tikzpicture}}
\end{minipage}
\begin{minipage}[b]{0.23\linewidth}
\centerline{
\begin{tikzpicture}[baseline=0cm,scale=.8,yscale=-1]
\draw[help lines, color=gray!60, dashed] (-1,-1) grid (1,1);
\node at (-1, -1.5) {\scalebox{0.65}{\scri{$n-2$}}\,\,\,};
 \node at (0, -1.5) {\scalebox{0.65}{\scri{$n-1$}}};
 \node at (1, -1.5) {\scalebox{0.65}{\,\,\,\scri{$n$}}};
\node at (-2, -1) {\scalebox{0.65}{\,\,\,\scri{$\ell-1$}}};
\node at (-2, 0) {\scalebox{0.65}{\,\,\,\scri{$\ell$}}};
\node at (-2, 1) {\scalebox{0.65}{\,\,\,\scri{$\ell+1$}}};
\node at (0, 2) {(iii)};
\begin{scope}[every node/.style={minimum size=.1cm,inner sep=0mm,fill,circle}]
\draw[thick] (0,1) node {} -- (1,0)node {} ;
\draw [thick,dotted] (1,0) -- (0,-1);
\end{scope}
\draw[double distance=.5mm,->,shorten <= 2mm,shorten >= 2mm] (0,-1) -- (0,1);
\end{tikzpicture}}
\end{minipage}
\begin{minipage}[b]{0.23\linewidth}
\centerline{
\begin{tikzpicture}[baseline=0cm,scale=.8,yscale=-1]
\draw[help lines, color=gray!60, dashed] (-1,-1) grid (1,1);
\node at (-1, -1.5) {\scalebox{0.65}{\scri{$n-2$}}\,\,\,};
 \node at (0, -1.5) {\scalebox{0.65}{\scri{$n-1$}}};
 \node at (1, -1.5) {\scalebox{0.65}{\,\,\,\scri{$n$}}};
\node at (-2, -1) {\scalebox{0.65}{\,\,\,\scri{$\ell-1$}}};
\node at (-2, 0) {\scalebox{0.65}{\,\,\,\scri{$\ell$}}};
\node at (-2, 1) {\scalebox{0.65}{\,\,\,\scri{$\ell+1$}}};
\node at (0, 2) {(iv)};
\begin{scope}[every node/.style={minimum size=.1cm,inner sep=0mm,fill,circle}]
\draw[thick] (-1,0) node {} -- (0,1) node {} -- (1,0)node {} ;
\draw [thick,dotted] (1,0) -- (0,-1) -- (-1,0);
\end{scope}
\draw[double distance=.5mm,->,shorten <= 2mm,shorten >= 2mm] (0,-1) -- (0,1);
\end{tikzpicture}}
\end{minipage}}
\caption{Lowering moves of a path $p\in \mathscr{P}_{i,k}$ for $i<n-1$.}\label{lowering moves of a path 2}
\end{figure}
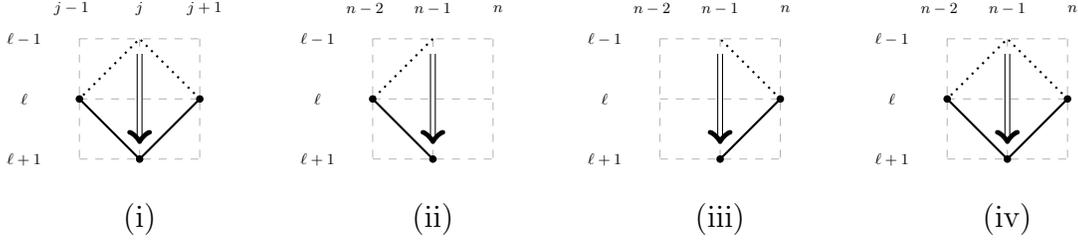

\subsubsection{Raising moves}

Following \cite[Section 5.3]{MY12a}, let $(i,k)\in\mathcal{X}$. A path $p\in \mathscr{P}_{i,k}$ is said to be raised at $(j,\ell)$ if and only if $p=p'\mathscr{A}^{-1}_{j,\ell}$ for some $p'\in \mathscr{P}_{i,k}$. It is unique if $p'$ exists, and we define $p':=p\mathscr{A}_{j,\ell}$. We can verify that $p$ can be raised at $(j,\ell)$ if and only if $(j,\ell+1)\in C^{-}_{p}$ and $(j,\ell-1)\notin C^{-}_{p}$.

\subsection{A lattice structure from paths} \label{a lattice structure from paths}
Let $(i,k)\in \mathcal{X}$. We assume that
\begin{align*}
& p=((0,y_0),(1,y_1),\ldots,(j,y_j),\ldots)\in \mathscr{P}_{i,k}, \\
& q=((0,z_0),(1,z_1),\ldots,(j,z_j),\ldots)\in \mathscr{P}_{i,k}.
\end{align*}
Define
\begin{gather}
\begin{align*}
& p \vee q = ((0,\min\{y_0,z_0\}),(1,\min\{y_1,z_1\}),\ldots,(j,\min\{y_j,z_j\}),\ldots), \\
& p \wedge q = ((0,\max\{y_0,z_0\}),(1,\max\{y_1,z_1\}),\ldots,(j,\max\{y_j,z_j\}),\ldots).
\end{align*}
\end{gather}
Obviously, both $p \vee q$ and $p \wedge q$ are paths in $\mathscr{P}_{i,k}$.

The set of all paths in $\mathscr{P}_{i,k}$ forms a lattice under the operators $\vee$ and $\wedge$. Let $p$ and $q$ be two paths in $\mathscr{P}_{i,k}$. We say that $p \prec q$ if and only if there exists a unique $(j,\ell)$ such that $p=q \mathscr{A}^{-1}_{j,\ell}$. The highest (respectively, lowest) path $p^{+}_{i,k}$ (respectively, $p^{-}_{i,k}$) is the maximum (respectively, minimum) element in $\mathscr{P}_{i,k}$ with respect to $\prec$.

\subsection{From paths to monomials or binomials}\label{from paths to monomials}

In the section, we assign a monomial or binomial to a path $p\in \mathscr{P}_{i,k}$, with $(i,k)\in \mathcal{X}$.

For $p\in \mathscr{P}_{n-1,k}$, we assume without loss of generality that
\[
p=((0,y_{0}),(1,y_{1}),\ldots,(j,y_{j}),\ldots,(n-1,y_{n-1})).
\]
Define a monomial $m(p)$ associated to $p$ as follows:
\begin{align}\label{the mononial ssociated to a path}
m(p)=\begin{cases}
Y_{f(y_{n-1}),y_{n-1}} \prod\limits_{\substack{(j,\ell)\in C^{+}_{p} \\ j \neq n-1}}Y_{j,\ell} \prod\limits_{\substack{(j,\ell)\in C^{-}_{p} \\ j\neq n-1}}Y^{-1}_{j,\ell}  &  \text{if $(n-1,y_{n-1})\in C^+_p$},  \\
Y^{-1}_{g(y_{n-1}),y_{n-1}} \prod\limits_{ \substack{(j,\ell)\in C^{+}_{p} \\ j \neq n-1}}Y_{j,\ell} \prod\limits_{\substack{(j,\ell)\in C^{-}_{p} \\ j\neq n-1}}Y^{-1}_{j,\ell}  &  \text{if $(n-1,y_{n-1})\in C^-_p$},
\end{cases}
\end{align}
where $f(y_{n-1})= \begin{cases} n-1 & \text{if $y_{n-1}-k \equiv 0 \pmod 4$}, \\  n & \text{if $y_{n-1}-k \equiv 2 \pmod 4$},  \end{cases}$ and
\[
g(y_{n-1})= \begin{cases} n & \text{if $y_{n-1}-k \equiv 0 \pmod 4$}, \\  n-1 & \text{if $y_{n-1}-k \equiv 2 \pmod 4$}. \end{cases}
\]

For $p\in \mathscr{P}_{i,k}$ and $i< n-1$, define  
\begin{align*}
m(p)=Z \prod_{\substack{(j,\ell)\in C^{+}_{p} \\ j\neq n-1} }Y_{\overline{j},\ell}\prod_{\substack{(j,\ell)\in C^{-}_{p}, \\ j\neq n-1}}Y^{-1}_{\overline{j},\ell},
\end{align*}
where $Z$ is defined as follows:
\begin{align} \label{monomials associated to paths1}
Z=\begin{cases} 
Y_{n-1,\ell_{1}}Y^{-1}_{n,\ell_{2}}+Y_{n,\ell_{1}}Y^{-1}_{n-1,\ell_{2}} &  \text{if $p$ travels (1) in Figure \ref{paths and monomials}, $\ell_2-\ell_1 \equiv 2 \hspace{-0.3cm} \pmod 4$}, \\
Y_{n-1,\ell_{1}}Y^{-1}_{n-1,\ell_{2}}+Y_{n,\ell_{1}}Y^{-1}_{n,\ell_{2}} &  \text{if $p$ travels (1) in Figure \ref{paths and monomials}, $\ell_2-\ell_1 \equiv 0 \hspace{-0.3cm} \pmod 4$}, \\
Y^{-1}_{n-1,\ell_{1}}Y^{-1}_{n,\ell_{2}}+Y^{-1}_{n,\ell_{1}}Y^{-1}_{n-1,\ell_{2}} &  \text{if $p$ travels (2) in Figure \ref{paths and monomials}, $\ell_2-\ell_1 \equiv 0 \hspace{-0.3cm} \pmod 4$}, \\
Y^{-1}_{n-1,\ell_{1}}Y^{-1}_{n-1,\ell_{2}}+Y^{-1}_{n,\ell_{1}}Y^{-1}_{n,\ell_{2}} &  \text{if $p$ travels (2) in Figure \ref{paths and monomials}, $\ell_2-\ell_1 \equiv 2 \hspace{-0.3cm} \pmod 4$}, \\
Y_{n-1,\ell_{1}}Y_{n,\ell_{2}}+Y_{n,\ell_{1}}Y_{n-1,\ell_{2}} & \text{if $p$ travels (3) in Figure \ref{paths and monomials}, $\ell_2-\ell_1 \equiv 0 \hspace{-0.3cm} \pmod 4$}, \\
Y_{n-1,\ell_{1}}Y_{n-1,\ell_{2}}+Y_{n,\ell_{1}}Y_{n,\ell_{2}} & \text{if $p$ travels (3) in Figure \ref{paths and monomials}, $\ell_2-\ell_1 \equiv 2 \hspace{-0.3cm} \pmod 4$}, \\
Y^{-1}_{n-1,\ell_{1}}Y_{n,\ell_{2}}+Y^{-1}_{n,\ell_{1}}Y_{n-1,\ell_{2}}  & \text{if $p$ travels (4) in Figure \ref{paths and monomials}, $\ell_2-\ell_1 \equiv 2 \hspace{-0.3cm} \pmod 4$}, \\
Y^{-1}_{n-1,\ell_{1}}Y_{n-1,\ell_{2}}+Y^{-1}_{n,\ell_{1}}Y_{n,\ell_{2}}  & \text{if $p$ travels (4) in Figure \ref{paths and monomials}, $\ell_2-\ell_1 \equiv 0 \hspace{-0.3cm} \pmod 4$}, \\
Y_{n-1,\ell-1}Y_{n,\ell-1} & \text{if $p$ travels (5) in Figure \ref{paths and monomials}}, \\
Y^{-1}_{n-1,\ell}Y^{-1}_{n,\ell} & \text{if $p$ travels (6) in Figure \ref{paths and monomials}}.
\end{cases}
\end{align}

\begin{figure}[H]
\resizebox{1.0\width}{1.0\height}{
\begin{minipage}[b]{0.15\linewidth}
\centerline{
\begin{tikzpicture}[scale=.5]
\draw[help lines, color=gray!60, dashed] (-1,0) grid (1,4);
\begin{scope}[thick, every node/.style={sloped,allow upside down}]
\draw (-1,1)-- node {\midarrow}(0,0);
\draw[dashed] (0,0)--node {\midarrow}(0,4);
\draw (0,4)--node {\midarrow}(1,3);
\end{scope}
\node at (0, 4.5) {\scalebox{0.55}{\scri{$n-1$}}};
\node at (-1.5, 0) {\scalebox{0.55}{\,\,\,\scri{$\ell_{2}$}}};
\node at (-1.5, 2) {\scalebox{0.55}{\,\,\,\scri{$\vdots$}}};
\node at (-1.5, 4) {\scalebox{0.65}{\,\,\,\scri{$\ell_{1}$}}};
\node at (0, -1) {(1)};
\end{tikzpicture}}
\end{minipage}
\begin{minipage}[b]{0.15\linewidth}
\centerline{
\begin{tikzpicture}[scale=.5]
\draw[help lines, color=gray!60, dashed] (-1,0) grid (1,4);
\begin{scope}[thick, every node/.style={sloped,allow upside down}]
\draw (-1,1)--node {\midarrow}(0,0);
\draw[dashed] (0,0)--node {\midarrow}(0,3);
\draw (0,3)--node {\midarrow}(1,4);
\end{scope}
\node at (0, 4.5) {\scalebox{0.55}{\scri{$n-1$}}};
 \node at (-1.5, 0) {\scalebox{0.55}{\,\,\,\scri{$\ell_{2}$}}};
\node at (-1.5, 1.5) {\scalebox{0.55}{\,\,\,\scri{$\vdots$}}};
\node at (-1.5, 3) {\scalebox{0.65}{\,\,\,\scri{$\ell_{1}$}}};
\node at (0, -1) {(2)};
\end{tikzpicture}}
\end{minipage}
\begin{minipage}[b]{0.15\linewidth}
\centerline{
\begin{tikzpicture}[scale=.5]
\draw[help lines, color=gray!60, dashed] (-1,0) grid (1,4);
\begin{scope}[thick, every node/.style={sloped,allow upside down}]
\draw (-1,0)--node {\midarrow}(0,1);
\draw[dashed] (0,1)--node {\midarrow}(0,4);
\draw (0,4)--node {\midarrow}(1,3);
\end{scope}
\node at (0, 4.5) {\scalebox{0.55}{\scri{$n-1$}}};
 \node at (-1.5, 1) {\scalebox{0.55}{\,\,\,\scri{$\ell_{2}$}}};
\node at (-1.5, 2.5) {\scalebox{0.55}{\,\,\,\scri{$\vdots$}}};
\node at (-1.5, 4) {\scalebox{0.65}{\,\,\,\scri{$\ell_{1}$}}};
\node at (0, -1) {(3)};
\end{tikzpicture}}
\end{minipage}
\begin{minipage}[b]{0.15\linewidth}
\centerline{
\begin{tikzpicture}[scale=.5]
\draw[help lines, color=gray!60, dashed] (-1,0) grid (1,4);
\begin{scope}[thick, every node/.style={sloped,allow upside down}]
\draw (-1,0)--node {\midarrow}(0,1);
\draw[dashed] (0,1)--node {\midarrow}(0,3);
\draw (0,3)--node {\midarrow}(1,4);
\end{scope}
\node at (0, 4.5) {\scalebox{0.55}{\scri{$n-1$}}};
 \node at (-1.5, 1) {\scalebox{0.55}{\,\,\,\scri{$\ell_{2}$}}};
\node at (-1.5, 2) {\scalebox{0.55}{\,\,\,\scri{$\vdots$}}};
\node at (-1.5, 3) {\scalebox{0.65}{\,\,\,\scri{$\ell_{1}$}}};
\node at (0, -1) {(4)};
\end{tikzpicture}}
\end{minipage}
\begin{minipage}[b]{0.15\linewidth}
\centerline{
\begin{tikzpicture}[scale=.5]
\draw[help lines, color=gray!90, dashed] (0,0) grid (2,2);
\begin{scope}[thick, every node/.style={sloped,allow upside down}]
\draw (0,1) -- node {\midarrow}(1,2);
\draw (1,2) -- node {\midarrow}(2,1);;
\end{scope}
\node at (1, 2.35) {\scalebox{0.75}{\scri{$n-1$}}};
\node at (-0.4, 1) {\scalebox{0.75}{\,\,\,\scri{$\ell$}}};
\node at (-0.8, 2) {\scalebox{0.75}{\,\,\,\scri{$\ell-1$}}};
\node at (1,-1) {(5)};
\end{tikzpicture}}
\end{minipage}
\begin{minipage}[b]{0.15\linewidth}
\centerline{
\begin{tikzpicture}[scale=.5]
\draw[help lines, color=gray!90, dashed] (0,0) grid (2,2);
\begin{scope}[thick, every node/.style={sloped,allow upside down}]
\draw (0,2) -- node {\midarrow}(1,1);
\draw (1,1) -- node {\midarrow}(2,2);
\end{scope}
\node at (1, 2.35) {\scalebox{0.75}{\scri{$n-1$}}};
\node at (-0.4, 1) {\scalebox{0.75}{\,\,\,\scri{$\ell$}}};
\node at (-0.8, 2) {\scalebox{0.75}{\,\,\,\scri{$\ell-1$}}};
\node at (1,-1) {(6)};
\end{tikzpicture}}
\end{minipage}}
\caption{All possible ways of a path $p$ travelling $x=n-1$ axis.}\label{paths and monomials}
\end{figure}
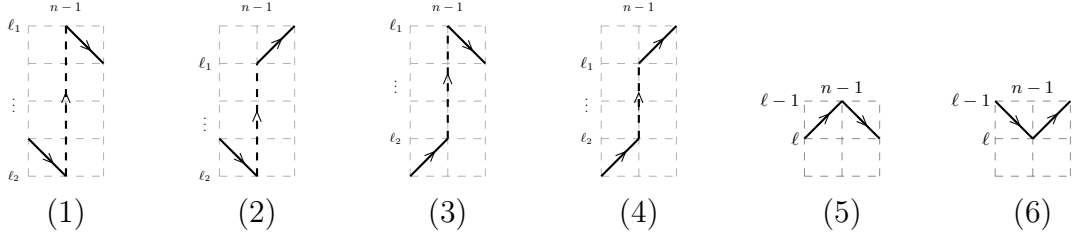

A map $m$ sending paths in $\mathscr{P}_{i,k}$ to Laurent polynomials is defined by
\begin{align*}
m: \mathscr{P}_{i,k} & \longrightarrow \mathbb{Z}[Y^{\pm1}_{j,\ell} | (j,\ell) \in I \times \mathbb{Z}]   \\
p & \longmapsto  m(p).
\end{align*}

We always identify a path $p$ with  $m(p)$.

\section{A combinatorial approach to dimensions of fundamental modules of type $D_{n}$} \label{Dimensions of fundamental modules}
In this section, we compute the number of monomials (including multiplicities) associated to paths in $\mathscr{P}_{i,k}$, which is proved to be the same with the dimension of the fundamental module $L(Y_{i,k})$ of type $D_n$. The number does not depend on the choice of the parameter $k$, so we assume without loss of generality that $k=0$.

We agree that $\sum_{j=0}^{k} \binom{n}{j}=0$ if $k<0$ and $n\geq 0$, and extend the definition of $\mathscr{P}_{i,k}$ to the domain $\{(i,k) \in \mathscr{S} \times \mathbb{Z} \mid  i-k\equiv 0 \hspace{-0.1cm} \pmod  2 \}$. The following lemma counts the number of paths in $\mathscr{P}_{i,0}$, with $i\in I$.

\begin{lemma}\label{the numbers of paths}
Suppose that $\mathscr{P}_{i,0}$, with $i\in I$, is the set of paths defined in Section \ref{defintion of paths}.
\begin{itemize}
\item[(1)] For $i<n-1$, we have
\begin{equation*}\label{dimension1}
|\mathscr{P}_{i,0}|=\sum^{i}_{j=0} \sum^{i-j}_{l=0} \binom{n-1}{j} \binom{n-1}{l}.
\end{equation*}

\item[(2)] For $i=n-1$, we have
\begin{equation*}\label{dimension2}
|\mathscr{P}_{i,0}|=2^{n-1}.
\end{equation*}
\end{itemize}

\begin{proof}

(1) Assume without loss of generality that each path in $\mathscr{P}_{i,0}$ has the form (\ref{path form in pik}). Let $a$ denote the point $(0,i)$ and $b$ denote the point $(N,N-i)$. Obviously, $a$ (respectively, $b$) is the leftmost (respectively, rightmost) point of any path in $\mathscr{P}_{i,0}$.

The number of points in $\mathscr{P}_{i,0} \cap (x=n-1)$ is $(i+1)$, and we assume without loss of generality that these points are $p_0,p_1, \ldots,p_i$ in the descending order of vertical coordinates. The number of paths from $a$ to $p_j$ ($0\leq j \leq i$) in $\mathscr{P}_{i,0} \cap (x\leq n-1)$ is $\binom{n-1}{j}$, and the number of paths from $p_l$ ($0\leq l \leq i$) to $b$ in $\mathscr{P}_{i,0} \cap (x\geq n-1)$ is $\binom{n-1}{i-l}$. Every path in $\mathscr{P}_{i,0}$ starts with $a$, goes through two points $p_j$ and $p_l$ ($j\leq l$) in order, and ends with $b$. When we fix $j$, the $l$ can take any value in $\{j,j+1,\ldots,i\}$. So
\begin{equation*}
|\mathscr{P}_{i,0}|=\sum^{i}_{j=0} \sum^{i-j}_{l=0} \binom{n-1}{j} \binom{n-1}{l}.
\end{equation*}

(2) Let $a$ denote the point $(0,n-1)$. Obviously, $a$ is the leftmost point of any path in $\mathscr{P}_{n-1,0}$. The number of points in $\mathscr{P}_{n-1,0} \cap (x=n-1)$ is $n$, and we assume without loss of generality that these points are $p_0,p_1, \ldots, p_{n-1}$ in the descending order of vertical coordinates. The number of paths from $a$ to $p_j$ ($0\leq j \leq n-1$) in $\mathscr{P}_{i,0}$ is $\binom{n-1}{j}$. Let $j$ take all values in $\{0,1,\ldots,n-1\}$, our result follows.

\end{proof}
\end{lemma}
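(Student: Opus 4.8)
The plan is to treat both statements as lattice-path enumeration problems, exploiting the fact that each path is an up/down walk (every step changes the vertical coordinate by $\pm 1$), so that the walks reaching a prescribed endpoint are counted by a single binomial coefficient. The whole argument is bookkeeping about where these walks can land on the axis $x=n-1$, combined with the standard bijection between such walks and their sets of descending steps.

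For part (2) this is immediate. A path in $\mathscr{P}_{n-1,0}$ is a sequence $((0,y_0),\ldots,(n-1,y_{n-1}))$ with $y_0$ fixed and each of the $n-1$ increments $y_{j+1}-y_j$ chosen freely in $\{1,-1\}$, so the set is in bijection with $\{1,-1\}^{n-1}$ and has $2^{n-1}$ elements. Equivalently, I would group the walks by the height of their endpoint at $x=n-1$ and sum the binomials $\binom{n-1}{j}$ over all $j$; recording it this way makes the same bookkeeping carry over to part (1).

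For part (1) the main step is to decompose a path $p\in\mathscr{P}_{i,0}$ into the three pieces dictated by the definition of $\mathscr{P}_{i,k}$: a left half from the fixed leftmost point $a=(0,i)$ to the axis $x=n-1$, a vertical gluing segment along $x=n-1$, and a right half from $x=n-1$ out to the fixed rightmost point $b=(N,N-i)$. The left half is a walk governed by $\mathscr{P}_{n-1,-(n-i-1)}$ and the right half by $\widehat{\mathscr{P}}_{n-1,\,n-i-1}$. I would first pin down the admissible landing heights on $x=n-1$: the left half reaches heights in an arithmetic progression of step $2$, as does the reversed right half, and their overlap is exactly an interval of $i+1$ lattice heights. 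Labelling these $p_0,\ldots,p_i$ in descending order of height, a walk from $a$ to $p_j$ must use exactly $j$ down-steps among its $n-1$ steps and is therefore counted by $\binom{n-1}{j}$, while the analogous reversed computation shows that a walk from $p_l$ to $b$ is counted by $\binom{n-1}{i-l}$.

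Finally I would encode the gluing constraint $a_{n-1}-\overline{a}_{n-1}=(0,y)$ with $y\ge 0$: since the $p_r$ are listed in descending order of height, the left endpoint $p_j$ lying weakly above the right endpoint $p_l$ is precisely the condition $j\le l$. Summing the product of the two binomials over all admissible pairs yields $\sum_{0\le j\le l\le i}\binom{n-1}{j}\binom{n-1}{i-l}$, and the substitution $l\mapsto i-l$ turns the inner range $j\le l\le i$ into $0\le l\le i-j$, producing the asserted double sum. The part I expect to require the most care is the determination that the two height ranges overlap in exactly $i+1$ points (this is where the hypothesis $i<n-1$ is used) together with the verification that the gluing inequality corresponds cleanly to $j\le l$ rather than a strict or shifted inequality; the binomial counts for each half are routine once the endpoints are fixed.
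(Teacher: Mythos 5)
Your proposal is correct and follows essentially the same route as the paper's own proof: the same decomposition of a path in $\mathscr{P}_{i,0}$ into a left half ending at some $p_j$ on the axis $x=n-1$, a gluing condition equivalent to $j\le l$, and a right half from $p_l$ to $b$, with the two halves counted by $\binom{n-1}{j}$ and $\binom{n-1}{i-l}$ and the substitution $l\mapsto i-l$ giving the stated double sum. Your part (2) via the bijection with $\{1,-1\}^{n-1}$ is a slightly more direct phrasing of the paper's summation $\sum_j\binom{n-1}{j}=2^{n-1}$, but it is the same argument.
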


Recall that we assign monomials to each path in $\mathscr{P}_{i,0}$, with $i\in I$, see Section \ref{from paths to monomials}. Let $\mathcal{M}(\mathscr{P}_{i,0})$ be the set of monomials associated to paths in $\mathscr{P}_{i,0}$.

The following Lemma records the cardinality of the set $\mathcal{M}(\mathscr{P}_{i,0})$.
\begin{theorem}\label{the number of monomials}
Under the assumption of Lemma \ref{the numbers of paths},
\begin{itemize}
\item[(1)] For $i<n-1$, we have
\begin{equation} \label{dimension3}
|\mathcal{M}(\mathscr{P}_{i,0})|=\binom{2n-2}{i}+2\sum^{i}_{j=0} \sum^{i-j-1}_{l=0} \binom{n-1}{j} \binom{n-1}{l}.
\end{equation}
\item[(2)] For $i=n-1$, we have
\begin{equation}\label{dimension4}
|\mathcal{M}(\mathscr{P}_{i,0})|=2^{n-1}.
\end{equation}
\end{itemize}
\begin{proof}

(1) Keep the assumptions and notation in the proof of Lemma \ref{the numbers of paths}~(1). Let $p$ be a path in $\mathscr{P}_{i,0}$ starting with $a$, travelling two points $p_j$ and $p_l$ ($j\leq l$) in order, and ending with $b$. We assign binomials to the $p$ if $j<l$ and assign one monomial to the $p$ if $j=l$, see Equation (\ref{monomials associated to paths1}). The number of monomials for $j=l$ is
\[
\binom{2n-2}{i}=\sum^{i}_{j=0} \binom{n-1}{j} \binom{n-1}{i-j},
\]
and the number of monomials for $j<l$ is
\[
2\sum^{i}_{j=0} \sum^{i-j-1}_{l=0} \binom{n-1}{j} \binom{n-1}{l}.
\]
The cardinality $|\mathcal{M}(\mathscr{P}_{i,0})|$ is a sum of the two cases above.

(2) Every path in $\mathscr{P}_{n-1,0}$ is assigned to a monomial, see Equation (\ref{the mononial ssociated to a path}). Our result follows directly from Lemma \ref{the numbers of paths}~(2).
\end{proof}
\end{theorem}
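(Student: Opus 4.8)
The plan is to build directly on the path count of Lemma~\ref{the numbers of paths}, keeping track of whether each individual path is assigned a single monomial or a binomial. The decisive observation is that the assignment in Section~\ref{from paths to monomials} is governed entirely by the way a path crosses the axis $x=n-1$: a path that meets the axis at a single height (cases (5) and (6) of Figure~\ref{paths and monomials}) receives one monomial via \eqref{monomials associated to paths1}, whereas a path with a nontrivial vertical run on the axis (cases (1)--(4)) receives a binomial, i.e.\ a sum of two monomials. So first I would recall the bijective description of $\mathscr{P}_{i,0}$ from the proof of Lemma~\ref{the numbers of paths} and reinterpret it through the entry point $p_j$ and exit point $p_l$ (with $j\le l$, the points $p_0,\dots,p_i$ listed in descending order of height) at which a path touches the axis.

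For part (1) I would split $\mathscr{P}_{i,0}$ according to $j=l$ versus $j<l$. When $j=l$ the path has an upper or lower corner on the axis, as in Figure~\ref{paths and monomials} (5), (6), and is assigned exactly one monomial; the number of such paths is $\sum_{j=0}^{i}\binom{n-1}{j}\binom{n-1}{i-j}$, which collapses to $\binom{2n-2}{i}$ by the Vandermonde identity. When $j<l$ the path has a genuine vertical segment on the axis and is assigned a binomial, hence contributes two monomials; after reindexing the right-hand factor by $l'=i-l$, the number of such paths becomes $\sum_{j=0}^{i}\sum_{l'=0}^{i-j-1}\binom{n-1}{j}\binom{n-1}{l'}$, and doubling it produces the second summand of \eqref{dimension3}. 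Summing the two contributions yields the claimed formula.

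Part (2) is essentially immediate: by \eqref{the mononial ssociated to a path} every path in $\mathscr{P}_{n-1,0}$ is sent to a single monomial, so the number of monomials equals $|\mathscr{P}_{n-1,0}|=2^{n-1}$ directly from Lemma~\ref{the numbers of paths}(2).

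The step I expect to demand the most care is not the arithmetic but the passage from a count with multiplicity to the cardinality of the \emph{set} $\mathcal{M}(\mathscr{P}_{i,0})$: one must confirm that distinct (path, binomial-term) pairs yield distinct monomials, so that no coincidence deflates the count. I would establish this combinatorially by reading the corner data off \eqref{monomials associated to paths1} and \eqref{the mononial ssociated to a path}, observing that the corner positions $(j,\ell)$, together with the terminal value $y_{n-1}$ and the residue of $y_{n-1}-k$ modulo $4$, reconstruct the path together with its chosen term uniquely. (This injectivity also follows a posteriori from the thinness of the fundamental modules obtained later via the $q$-character theorem, but it is cleaner to verify it here at the purely combinatorial level.)
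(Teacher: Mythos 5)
Your counting argument is the same as the paper's: split the paths of $\mathscr{P}_{i,0}$ according to the entry and exit points $p_j$, $p_l$ on the axis $x=n-1$, assign one monomial when $j=l$ and two when $j<l$, evaluate the first contribution by Vandermonde as $\binom{2n-2}{i}$ and the second as $2\sum_{j}\sum_{l'\le i-j-1}\binom{n-1}{j}\binom{n-1}{l'}$, and handle $i=n-1$ by noting every path gets a single monomial. That part is correct and matches the paper's proof essentially verbatim.

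Your final paragraph, however, contains a genuine error. You propose to show that distinct (path, binomial-term) pairs yield distinct monomials, so that the multiplicity count equals the cardinality of a set of distinct Laurent monomials. This injectivity is false: in Example \ref{example of D4} (type $D_4$, $i=2$) the monomial $Y_{2,3}Y^{-1}_{2,5}$ arises from two different paths in $\mathscr{P}_{2,1}$, which is exactly why $\chi_q([L(Y_{2,1})])$ contains the term $2\,Y_{2,3}Y^{-1}_{2,5}$; the corner data do not reconstruct the path uniquely. The appeal to thinness also fails for the same reason --- these fundamental modules of type $D_n$ are not thin, as the paper's remark after Theorem \ref{path formula for fundamental modules} notes. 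The statement is nevertheless correct because $\mathcal{M}(\mathscr{P}_{i,0})$ is to be read as the multiset of monomials counted with multiplicity (as announced at the start of Section \ref{Dimensions of fundamental modules}), so no injectivity is needed; you should simply delete that verification rather than attempt it.
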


Following \cite[Chapter 10]{CP94}, for any $\lambda\in P$, the Verma module $M_q(\lambda)$ is defined as the quotient of $U_q (\mathfrak{g})$ by the left ideal generated by $x^+_{i}$ and $k_i-q^{(\lambda,\omega_i)}$, for $i\in I$. It is obvious that $M(\lambda)$ is a highest weight $U_q (\mathfrak{g})$-module with the highest weight $\lambda$, which has a unique simple quotient $V_q(\lambda)$. Moreover, every simple highest weight $U_q (\mathfrak{g})$-module with the highest weight $\lambda$ is isomorphic to $V_q(\lambda)$. If $\lambda=\omega_{i}$, then $V_q(\omega_{i})$ is called a fundamental module of $U_q (\mathfrak{g})$.

It is well-known that finite-dimensional $U_q (\mathfrak{g})$-modules of type 1 have one-to-one correspondence with finite-dimensional $\mathfrak{g}$-modules, see \cite{CP94,Lus93}, they have the same characters, and hence the same dimension.

Let $\text{Rep}(U_q (\mathfrak{g}))$ be the Grothendieck ring of the category of finite-dimensional $U_q(\mathfrak{g})$-modules. There is a characteristic homomorphism
\[
\chi: \text{Rep}(U_q (\mathfrak{g})) \rightarrow \mathbb{Z}[y^{\pm1}_{i}]_{i\in I},
\]
where $y_{i}$ is the function corresponding to the character of the fundamental module $V_q(\omega_{i})$. One of the properties of $q$-character $\chi_{q}$ is that if we replace each $Y^{\pm1}_{i,a}$ by $y^{\pm1}_{i}$ in $\chi_{q}([V])$, where $V$ is a $U_q(\widehat{\mathfrak{g}})$-module, then we obtain the character $\chi(V|_{U_q (\mathfrak{g})})$ of $V$ as a $U_q(\mathfrak{g})$-module.

In \cite[Theorem 6.8]{CP95a}, Chari and Pressley gave the $U_q (\mathfrak{g})$-structure of most of the fundamental $U_q(\widehat{\mathfrak{g}})$-modules. Denote by $V|_{U_q(\mathfrak{g})}$ (respectively, $V|_{U_q(\widehat{\mathfrak{g}})}$) the $U_q(\mathfrak{g})$-structure (respectively, $U_q(\widehat{\mathfrak{g}})$-structure) of $V$. For the Lie algebra $\mathfrak{g}$ of type $D_n$, the Chari-Pressley decomposition is as follows:
 \begin{itemize}
\item[$(1)$] For $i\in\{1,n-1,n\}$, $L(Y_{i,k})|_{U_q (\mathfrak{g})} \cong V_q(\omega_i)$ (independent on the choice of $k$).
\item[$(2)$] For $1<i<n-1$, $L(Y_{i,k})|_{U_q (\mathfrak{g})} \cong \bigoplus^{\lfloor\frac{i}{2}\rfloor}_{j=0}V_q(\omega_{i-2j})$ (independent on the choice of $k$).
\end{itemize}
Here for any integer $i$, $\lfloor i \rfloor$ is the greatest integer less than or equal to $i$. By the Weyl dimension formula in \cite[Chapter 6, Section 24]{Hum78}, for $i\in \{1,2\ldots,n-2\}$, the dimension of the $i$-th fundamental module $V_q(\omega_i)$ is $\binom{2n}{i}$, and  $V_q(\omega_{n-1})$ and $V_q(\omega_n)$ have the same dimension, which equals $2^{n-1}$. These results were explicitly computed in \cite[Proposition 13.10]{Car05}. Hence
\begin{itemize}
\item[(1)] for $i\in\{n-1,n\}$, $\text{dim}(L(Y_{i,0})|_{U_q (\mathfrak{g})})=2^{n-1}$,
\item[(2)] for $1\leq i<n-1$, $\text{dim}(L(Y_{i,0})|_{U_q (\mathfrak{g})})=\sum^{\lfloor\frac{i}{2}\rfloor}_{j=0}\binom{2n}{i-2j}$.
\end{itemize}

The following corollary will be particularly useful in the sequel.

\begin{corollary}\label{a new expression on dimensions of fundamental modules}
For any $i\in I$, we have
\begin{align}\label{equation1}
\text{dim}(L(Y_{i,0})|_{U_q (\mathfrak{g})})=|\mathcal{M}(\mathscr{P}_{i,0})|.
\end{align}
\end{corollary}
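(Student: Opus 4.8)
The plan is to deduce the identity \eqref{equation1} by matching the explicit count produced in Theorem \ref{the number of monomials} against the dimension formulas coming from the Chari--Pressley decomposition and the Weyl dimension formula recorded immediately before the statement. There are two regimes to treat, dictated by the shape of the Chari--Pressley decomposition, and in each the representation theory has already been reduced to a purely combinatorial assertion.

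First I would dispose of the spinor indices $i\in\{n-1,n\}$. Here Theorem \ref{the number of monomials}(2) gives $|\mathcal{M}(\mathscr{P}_{n-1,0})|=2^{n-1}$, while the spinor case of the decomposition gives $\dim(L(Y_{i,0})|_{U_q(\mathfrak{g})})=2^{n-1}$, so the two sides agree at once. Since the count is insensitive to $k$ (as noted at the start of the section), the node $n$ is handled by the same argument after the parity shift $k\mapsto k+2$, which interchanges the roles of $n-1$ and $n$ in the monomial assignment \eqref{the mononial ssociated to a path}; equivalently one invokes the diagram automorphism swapping the two fork nodes.

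The substantive case is $1\le i<n-1$, where the claim reduces to the binomial identity
\[
\sum_{t=0}^{\lfloor i/2\rfloor}\binom{2n}{\,i-2t\,}=\binom{2n-2}{i}+2\sum_{j=0}^{i}\sum_{l=0}^{i-j-1}\binom{n-1}{j}\binom{n-1}{l}.
\]
The first step is to simplify the right-hand side. The double sum ranges precisely over pairs with $j+l\le i-1$, so regrouping by $s=j+l$ and applying Vandermonde's identity collapses it to $\sum_{s=0}^{i-1}\binom{2n-2}{s}$. Thus the combinatorial count becomes $c_i:=\binom{2n-2}{i}+2\sum_{s=0}^{i-1}\binom{2n-2}{s}$, and it remains to compare $c_i$ with $d_i:=\sum_{t=0}^{\lfloor i/2\rfloor}\binom{2n}{i-2t}$.

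I would then finish by comparing ordinary generating functions, which is the cleanest route. Convolving $(1+x)^{2n}$ with $\sum_{t\ge 0}x^{2t}$ shows $\sum_i d_i x^i=(1+x)^{2n}/(1-x^2)$. On the other side, using $\sum_i\binom{2n-2}{i}x^i=(1+x)^{2n-2}$ together with the partial-sum rule $\sum_i\bigl(\sum_{s=0}^{i-1}\binom{2n-2}{s}\bigr)x^i=\tfrac{x}{1-x}(1+x)^{2n-2}$ gives $\sum_i c_i x^i=(1+x)^{2n-2}\bigl(1+\tfrac{2x}{1-x}\bigr)=(1+x)^{2n-1}/(1-x)=(1+x)^{2n}/(1-x^2)$. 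The two generating functions coincide, so $c_i=d_i$ for every $i$, in particular throughout $1\le i<n-1$, which is the required identity. The only delicate point is the bookkeeping of the floor $\lfloor i/2\rfloor$ and the off-by-one shift in the partial sums; carrying everything through as formal power series removes any ambiguity, so I expect no genuine obstacle beyond this routine care, the conceptual input being entirely the Vandermonde collapse of the double sum.
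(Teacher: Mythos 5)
Your proposal is correct, and the reduction to the identity $\sum_{t=0}^{\lfloor i/2\rfloor}\binom{2n}{i-2t}=\binom{2n-2}{i}+2\sum_{j+l\le i-1}\binom{n-1}{j}\binom{n-1}{l}$ is exactly the content of the corollary once Theorem \ref{the number of monomials} and the Chari--Pressley/Weyl dimension data are in place; your Vandermonde collapse of the double sum to $\sum_{s=0}^{i-1}\binom{2n-2}{s}$ is valid (the pairs $(j,l)$ with $0\le j\le i$, $0\le l\le i-j-1$ are precisely those with $j+l\le i-1$, given the paper's convention that empty sums vanish), and the generating-function computation $(1+x)^{2n-2}\bigl(1+\tfrac{2x}{1-x}\bigr)=(1+x)^{2n}/(1-x^2)$ checks out. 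However, your route differs from the paper's: the paper proves the identity by induction on $i$ in steps of $2$, treating odd and even $i$ separately, using the Pascal-type expansion $\binom{2n}{i+2}=\binom{2n-2}{i+2}+2\binom{2n-2}{i+1}+\binom{2n-2}{i}$ and matching the increment of the double sum against $2\binom{2n-2}{i+1}+2\binom{2n-2}{i}$ by hand. Your argument buys a uniform treatment of all $i$ at once (no parity split, no induction), and the intermediate closed form $c_i=\binom{2n-2}{i}+2\sum_{s=0}^{i-1}\binom{2n-2}{s}$ is a cleaner restatement of the path count; the paper's induction, on the other hand, stays entirely within finite binomial manipulations and avoids any appeal to formal power series. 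The spinor nodes $i\in\{n-1,n\}$ are handled identically in both arguments ($2^{n-1}=2^{n-1}$), with your remark about the swap of the fork nodes matching the paper's convention for passing from $Y_{n-1,k}$ to $Y_{n,k}$.
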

\begin{proof}
For $i\in\{n-1,n\}$, our result directly follows from Theorem \ref{the number of monomials}~(2). The rest of the proof is to show that our equation holds for $1\leq i<n-1$.

Suppose that $i$ is odd. We prove Equation (\ref{equation1}) by the induction on $i$. If $i=1$, then the term at the left hand side of Equation (\ref{equation1}) equals $\binom{2n}{1}=2n$, and the term at the right hand side of Equation (\ref{equation1}) equals $\binom{2n-2}{1}+2\binom{n-1}{0}\binom{n-1}{0}=2n$. So Equation (\ref{equation1}) holds. Suppose that Equation (\ref{equation1}) holds for $i$. We prove it for $i+2$.
\begin{align*}
\text{dim}(L(Y_{i+2,0})|_{U_q (\mathfrak{g})}) & = \binom{2n}{1}+\binom{2n}{3}+\cdots+\binom{2n}{i}+\binom{2n}{i+2} \\
& = \binom{2n-2}{i}+2\sum^{i}_{j=0} \sum^{i-j-1}_{l=0} \binom{n-1}{j} \binom{n-1}{l} + \binom{2n}{i+2},
\end{align*}
where the last equation follows from our induction.

By the combination formula $\binom{n+1}{m}=\binom{n}{m}+\binom{n}{m-1}$, we have
\[
\binom{2n}{i+2} = \binom{2n-2}{i+2} + 2 \binom{2n-2}{i+1} + \binom{2n-2}{i}.
\]
Hence
\begin{gather}
\begin{align*}
\text{dim}(L(Y_{i+2,0})|_{U_q (\mathfrak{g})}) = \binom{2n-2}{i+2} +2\sum^{i}_{j=0} \sum^{i-j-1}_{l=0} \binom{n-1}{j} \binom{n-1}{l}+ 2 \binom{2n-2}{i+1}+2\binom{2n-2}{i}.
\end{align*}
\end{gather}

On the other hand, by Theorem \ref{the number of monomials}~(1),

\begin{align*}
|\mathcal{M} & (\mathscr{P}_{i+2,0})| = \binom{2n-2}{i+2} + 2\sum^{i+2}_{j=0} \sum^{i-j+1}_{l=0} \binom{n-1}{j} \binom{n-1}{l} \\
& =\binom{2n-2}{i+2} +2\sum^{i}_{j=0} \sum^{i-j-1}_{l=0} \binom{n-1}{j} \binom{n-1}{l} \\
&+2\left( \binom{n-1}{i+1}\binom{n-1}{0}+\binom{n-1}{i}\binom{n-1}{1}+\cdots+\binom{n-1}{0}\binom{n-1}{i+1} \right)  \\
&+2\left( \binom{n-1}{i}\binom{n-1}{0}+\binom{n-1}{i-1}\binom{n-1}{1}+\cdots+\binom{n-1}{0}\binom{n-1}{i} \right) \\
& = \binom{2n-2}{i+2} +2\sum^{i}_{j=0} \sum^{i-j-1}_{l=0} \binom{n-1}{j} \binom{n-1}{l}+ 2 \binom{2n-2}{i+1}+2\binom{2n-2}{i}.
\end{align*}
This completes the induction step.

By the same argument with the case where $i$ is odd, we can prove Equation (\ref{equation1}) for an even number $i$. The proof is complete.

\end{proof}

\section{A combinatorial formula for $q$-characters of fundamental modules}\label{Path description for $q$-characters of fundamental modules}

In this section, we give a combinatorial algorithm for the $q$-characters of fundamental modules of type $D_n$. The $q$-character of the fundamental module $\chi_{q}([L(Y_{n,k})])$ can be obtained from $\chi_{q}([L(Y_{n-1,k})])$ by switching $Y_{n-1,\ell}$ with $Y_{n,\ell}$. So it is enough to investigate the behavior of the monomials in $\chi_{q}([L(Y_{i,k})])$, with $i\leq n-1$.

Let $(i,k)\in \mathcal{X}$. There exists a unique dominant (respectively, anti-dominant) monomial $Y_{i,k}$ (respectively, $Y^{-1}_{i^*,2n-2+k}$) in $\{ m(p) \mid p \in \mathscr{P}_{i,k}\}$, where $i^*$ is defined by $w_0(\alpha_i)=-\alpha_{i^*}$ for the longest element $w_0$ in the Weyl group of type $D_n$.

Now everything is in the place for our main theorem.

\begin{theorem}\label{path formula for fundamental modules}
For $(i,k) \in \mathcal{X}$, we have
\begin{align*}
\chi_{q}([L(Y_{i,k})])=\sum_{p \in \mathscr{P}_{i,k}} m(p).
\end{align*}
\end{theorem}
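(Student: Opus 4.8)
The plan is to prove the identity in two stages, adapting the strategy of Mukhin and Young. Write
\[
F:=\sum_{p\in\mathscr{P}_{i,k}} m(p).
\]
First I would show that $F$ lies in $\ker S_j$ for every $j\in I$; by Proposition \ref{isomorphism} this places $F$ in the image of $\chi_q$, so that $F=\chi_q([W])$ for a unique class $[W]\in\mathrm{Rep}(U_q(\widehat{\mathfrak g}))$. Second, I would identify $[W]$ with $[L(Y_{i,k})]$ using the non-negativity of the coefficients of $F$ together with the uniqueness of the dominant monomial $Y_{i,k}$ among the $m(p)$, recorded just before the statement.

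The structural input driving the first stage is the compatibility of the assignment $m$ with the lowering moves: for each admissible triple I expect a relation of the shape
\[
m\bigl(p\mathscr{A}^{-1}_{j,\ell}\bigr)=m(p)\,A^{-1}_{\overline{j},\ell},
\]
so that two paths joined by a move of colour $j$ differ precisely by the factor $A^{-1}_{\overline{j},\ell}$ featuring in the kernel description of Proposition \ref{Kernel}. I would verify this relation case-by-case from the definitions of Section \ref{from paths to monomials}, handling the ordinary nodes $1\le j\le n-2$ and the fork nodes $j\in\{n-1,n\}$ separately. Granting it, membership in $\ker S_j$ follows by partitioning $\mathscr{P}_{i,k}$ into the maximal chains of paths linked by $\mathscr{A}^{-1}_{j,\cdot}$-moves; using that $L(Y_{i,k})$ is thin, each chain has length at most two and contributes either a monomial free of $Y_{j,\cdot}$ or a summand of the form $Y_{j,b}\bigl(1+A^{-1}_{j,bq^{d_j}}\bigr)$, both of which lie in $\ker S_j$ by Proposition \ref{Kernel}. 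The lattice structure of Section \ref{a lattice structure from paths} guarantees that these chains exhaust $\mathscr{P}_{i,k}$ and that the pairing is well defined.

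The step I expect to be the main obstacle is the analysis at the two fork nodes $n-1$ and $n$. There $m(p)$ is a binomial rather than a monomial, see Equation \eqref{monomials associated to paths1}, reflecting the merging of the two branches of the Dynkin diagram, and the four special lowering moves of Figure \ref{lowering moves of a path 2} together with the six crossing patterns of Figure \ref{paths and monomials} must all be reconciled with $S_{n-1}$ and $S_n$ at once. Checking that these binomials reassemble into the kernel generators for both $S_{n-1}$ and $S_n$ simultaneously, while respecting the reflection symmetry about the line $x=n-1$ that distinguishes type $D_n$ from type $B_{n-1}$, is the genuinely delicate computation; the bookkeeping of the residues modulo $4$ in the definition of $m$ is designed precisely to make this cancellation work.

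For the second stage I would argue as follows. Every $m(p)$ is a monomial or binomial with coefficient $+1$, so $F$ has non-negative coefficients, and its unique maximal monomial is the dominant monomial $Y_{i,k}$, of multiplicity one. Writing $[W]=\sum_s n_s[L(m_s)]$ and comparing maximal monomials forces the highest constituent to be $L(Y_{i,k})$ with coefficient one. To see that nothing else survives, I would invoke the standard consequence of Proposition \ref{isomorphism}: the difference $F-\chi_q([L(Y_{i,k})])$ is again an element of the image, and all of its dominant monomials lie among the dominant monomials of $F$ and of $\chi_q([L(Y_{i,k})])$; as both reduce to $\{Y_{i,k}\}$ (using that the fundamental module $L(Y_{i,k})$ is special), which cancels, the difference has no dominant monomial and hence vanishes. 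Thus $F=\chi_q([L(Y_{i,k})])$, and Theorem \ref{the number of monomials} together with Corollary \ref{a new expression on dimensions of fundamental modules} furnishes an independent check that the number of monomials on the right-hand side equals $\dim L(Y_{i,k})$, exactly as required.
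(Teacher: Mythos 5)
Your overall strategy is the same as the paper's: show $\sum_{p}m(p)$ lies in $\bigcap_j\ker S_j$ by grouping paths according to moves of colour $j$ and invoking Propositions \ref{Kernel} and \ref{isomorphism}, then pin down the class by looking at dominant monomials. However, your first stage has a genuine gap. You claim that the maximal $j$-linked families are \emph{chains of length at most two}, justified by thinness of $L(Y_{i,k})$. Both halves of this are wrong. First, for $i<n-1$ the paths in $\mathscr{P}_{i,k}$ run over the full range $x=0,\dots,N$, and a colour $j\le n-2$ labels \emph{two} columns, $x=j$ and $x=N-j$ (since $\overline{j}=\overline{N-j}$). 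A path can carry independent corners in both columns, so the $j$-connected components are not chains: they can be $2\times 2$ squares of four paths $p_1$, $p_2=p_1\mathscr{A}^{-1}_{N-u,\ell'}$, $p_3=p_1\mathscr{A}^{-1}_{u,\ell}$, $p_4=p_2\mathscr{A}^{-1}_{u,\ell}$. The paper's Case~3 is devoted exactly to this configuration; its contribution is the product $(Y_{j,\ell-1}+Y_{j,\ell-1}A^{-1}_{j,\ell})(Y_{j,\ell'-1}+Y_{j,\ell'-1}A^{-1}_{j,\ell'})M$, which still lies in $\ker S_j$, but your pairing argument as stated does not produce it. Second, the appeal to thinness is unavailable: fundamental modules of type $D_n$ are \emph{not} thin (the paper's Example \ref{example of D4} exhibits the coefficient $2$ of $Y_{2,3}Y^{-1}_{2,5}$ in $\chi_q([L(Y_{2,1})])$), and in any case thinness of the module is a consequence of, not an input to, the combinatorics you are trying to establish. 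Your identification of the fork-node analysis as the delicate point is fair, but the doubling at the ordinary nodes is the piece you actually omitted.

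Your second stage differs mildly from the paper's and is fine, arguably cleaner: you use that a nonzero element of $\operatorname{im}\chi_q$ must have a dominant monomial, together with specialness of $L(Y_{i,k})$, to conclude $F-\chi_q([L(Y_{i,k})])=0$; the paper instead closes the argument by matching the count of monomials against $\dim L(Y_{i,k})$ via Corollary \ref{a new expression on dimensions of fundamental modules}. Either route works once stage one is repaired, provided you are willing to cite the Frenkel--Mukhin result that fundamental modules are special.
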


\begin{proof}
We fix $j\in I$. Following the proof of Theorem 4.3 in \cite{J22}, the set $\mathscr{P}_{i,k}$ can be refined as a disjoint union of the connected components with respect to lowering moves or raising moves at $(u,\ell)$ with $j \in \overline{u}$ for any $\ell\in \mathbb{Z}$. Let $C$ be a $j$-connected component of $\mathscr{P}_{i,k}$, and denote by $|C|$ the number of paths in $C$.

{\bf Case 1.} Assume that $|C|=1$. The path $p$ in $C$ has no upper or lower corner at $(u,\ell)$ with $j\in \overline{u}$ for any $\ell\in \mathbb{Z}$, which implies that $m(p)$ has no any factor $Y^{\pm 1}_{j,\ell}$. By the Leibniz rule of the $j$-th screening operator $S_{j}$, we have $m(p)\in \text{ker}(S_j)$ .

{\bf Case 2.} Assume that $|C|=2$. Let $p_{1}$ and $p_{2}$ be the two paths in $C$. Since $C$ is a $j$-connected component of $\mathscr{P}_{i,k}$, we have either $p_{2}=p_{1}\mathscr{A}^{-1}_{u,\ell}$ or  $p_{1}=p_{2}\mathscr{A}^{-1}_{u,\ell}$  with $j\in \overline{u}$ for some $\ell\in \mathbb{Z}$. We assume without loss of generality that $p_{2}=p_{1}\mathscr{A}^{-1}_{u,\ell}$. The local configurations of $p_1$ and $p_2$ near by $(u,\ell)$ are depicted in Figures \ref{local configuration of q (left) and p 1}--\ref{local configuration of q (left) and p 3}, and the other parts of $p_1$ and $p_2$ are the same.
\begin{figure}[H]
\resizebox{1.0\width}{1.0\height}{
\begin{minipage}[b]{0.5\linewidth}
\centerline{
\begin{tikzpicture}[baseline=0cm,scale=.8,yscale=-1]
\draw[help lines, color=gray!60, dashed] (-1,-1) grid (1,1);
\node at (-1, -1.5) {\scalebox{0.55}{\scri{$u-1$}}\,\,\,};
 \node at (0, -1.5) {\scalebox{0.55}{\scri{$u$}}};
 \node at (1, -1.5) {\scalebox{0.55}{\,\,\,\scri{$u+1$}}};
\node at (-2, -1) {\scalebox{0.65}{\,\,\,\scri{$\ell-1$}}};
\node at (-2, 0) {\scalebox{0.65}{\,\,\,\scri{$\ell$}}};
\node at (-2, 1) {\scalebox{0.65}{\,\,\,\scri{$\ell+1$}}};
\begin{scope}[every node/.style={minimum size=.1cm,inner sep=0mm,fill,circle}]
\draw[thick] (1,0) node {} -- (0,-1) node {} -- (-1,0) node {};
\end{scope}
\end{tikzpicture}}
\end{minipage}
\hspace{-20mm}
\begin{minipage}[b]{0.5\linewidth}
\centerline{
\begin{tikzpicture}[baseline=0cm,scale=.8,yscale=-1]
\draw[help lines, color=gray!60, dashed] (-1,-1) grid (1,1);
\node at (-1, -1.5) {\scalebox{0.65}{\scri{$u-1$}}\,\,\,};
 \node at (0, -1.5) {\scalebox{0.65}{\scri{$u$}}};
 \node at (1, -1.5) {\scalebox{0.65}{\,\,\,\scri{$u+1$}}};
\node at (-2, -1) {\scalebox{0.65}{\,\,\,\scri{$\ell-1$}}};
\node at (-2, 0) {\scalebox{0.65}{\,\,\,\scri{$\ell$}}};
\node at (-2, 1) {\scalebox{0.65}{\,\,\,\scri{$\ell+1$}}};
\begin{scope}[every node/.style={minimum size=.1cm,inner sep=0mm,fill,circle}]
\draw[thick] (-1,0) node {} -- (0,1) node {} -- (1,0)node {};
\end{scope}
\end{tikzpicture}}
\end{minipage}}
\caption{The local configuration of $p_1$ (left) and $p_2$ (right) near by $(u,\ell)$ for $u<n-1$.}\label{local configuration of q (left) and p 1}
\end{figure}
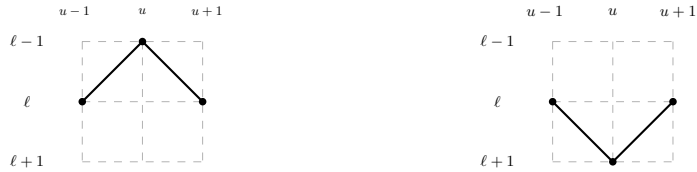

\begin{figure}[H]
\resizebox{1.0\width}{1.0\height}{
\begin{minipage}[b]{0.5\linewidth}
\centerline{
\begin{tikzpicture}[baseline=0cm,scale=.8,yscale=-1]
\draw[help lines, color=gray!60, dashed] (-1,-1) grid (1,1);
\node at (-1, -1.5) {\scalebox{0.55}{\scri{$n-2$}}\,\,\,};
 \node at (0, -1.5) {\scalebox{0.55}{\scri{$n-1$}}};
 \node at (1, -1.5) {\scalebox{0.55}{\,\,\,\scri{$n$}}};
\node at (-2, -1) {\scalebox{0.65}{\,\,\,\scri{$\ell-1$}}};
\node at (-2, 0) {\scalebox{0.65}{\,\,\,\scri{$\ell$}}};
\node at (-2, 1) {\scalebox{0.65}{\,\,\,\scri{$\ell+1$}}};
\begin{scope}[every node/.style={minimum size=.1cm,inner sep=0mm,fill,circle}]
\draw[thick]  (0,-1) node {} -- (-1,0) node {};
\end{scope}
\end{tikzpicture}}
\end{minipage}
\hspace{-20mm}
\begin{minipage}[b]{0.5\linewidth}
\centerline{
\begin{tikzpicture}[baseline=0cm,scale=.8,yscale=-1]
\draw[help lines, color=gray!60, dashed] (-1,-1) grid (1,1);
\node at (-1, -1.5) {\scalebox{0.65}{\scri{$n-2$}}\,\,\,};
 \node at (0, -1.5) {\scalebox{0.65}{\scri{$n-1$}}};
 \node at (1, -1.5) {\scalebox{0.65}{\,\,\,\scri{$n$}}};
\node at (-2, -1) {\scalebox{0.65}{\,\,\,\scri{$\ell-1$}}};
\node at (-2, 0) {\scalebox{0.65}{\,\,\,\scri{$\ell$}}};
\node at (-2, 1) {\scalebox{0.65}{\,\,\,\scri{$\ell+1$}}};
\begin{scope}[every node/.style={minimum size=.1cm,inner sep=0mm,fill,circle}]
\draw[thick] (-1,0) node {} -- (0,1) node {};
\end{scope}
\end{tikzpicture}}
\end{minipage}}
\caption{The local configuration of $p_1$ (left) and $p_2$ (right) near by $(u,\ell)$ for $u=n-1$.}\label{local configuration of q (left) and p 2}
\end{figure}

\begin{figure}[H]
\resizebox{1.0\width}{1.0\height}{
\begin{minipage}[b]{0.5\linewidth}
\centerline{
\begin{tikzpicture}[baseline=0cm,scale=.8,yscale=-1]
\draw[help lines, color=gray!60, dashed] (-1,-1) grid (1,1);
\node at (-1, -1.5) {\scalebox{0.55}{\scri{$n-2$}}\,\,\,};
 \node at (0, -1.5) {\scalebox{0.55}{\scri{$n-1$}}};
 \node at (1, -1.5) {\scalebox{0.55}{\,\,\,\scri{$n$}}};
\node at (-2, -1) {\scalebox{0.65}{\,\,\,\scri{$\ell-1$}}};
\node at (-2, 0) {\scalebox{0.65}{\,\,\,\scri{$\ell$}}};
\node at (-2, 1) {\scalebox{0.65}{\,\,\,\scri{$\ell+1$}}};
\begin{scope}[every node/.style={minimum size=.1cm,inner sep=0mm,fill,circle}]
\draw[thick] (1,0) node {} -- (0,-1);
\end{scope}
\end{tikzpicture}}
\end{minipage}
\hspace{-20mm}
\begin{minipage}[b]{0.5\linewidth}
\centerline{
\begin{tikzpicture}[baseline=0cm,scale=.8,yscale=-1]
\draw[help lines, color=gray!60, dashed] (-1,-1) grid (1,1);
\node at (-1, -1.5) {\scalebox{0.65}{\scri{$n-2$}}\,\,\,};
 \node at (0, -1.5) {\scalebox{0.65}{\scri{$n-1$}}};
 \node at (1, -1.5) {\scalebox{0.65}{\,\,\,\scri{$n$}}};
\node at (-2, -1) {\scalebox{0.65}{\,\,\,\scri{$\ell-1$}}};
\node at (-2, 0) {\scalebox{0.65}{\,\,\,\scri{$\ell$}}};
\node at (-2, 1) {\scalebox{0.65}{\,\,\,\scri{$\ell+1$}}};
\begin{scope}[every node/.style={minimum size=.1cm,inner sep=0mm,fill,circle}]
\draw[thick] (0,1) node {} -- (1,0)node {};
\end{scope}
\end{tikzpicture}}
\end{minipage}}
\caption{The local configuration of $p_1$ (left) and $p_2$ (right) near by $(u,\ell)$ for $u=n-1$.}\label{local configuration of q (left) and p 3}
\end{figure}


In this case, $m(p_1)+m(p_2)=(Y_{j, \ell-1}+Y_{j, \ell-1}A^{-1}_{j,\ell})M$, where $M$ is a monomial in $\{Y^{\pm1}_{i,\ell} \mid i\in I, \ell \in \mathbb{Z}\}$ without the factors $Y^{\pm1}_{j,\ell}$ for $\ell\in\mathbb{Z}$. Hence
\begin{align*}
S_j(m(p_1)+m(p_2)) & = S_j((Y_{j, \ell-1}+Y_{j, \ell-1}A^{-1}_{j,\ell})M)\\
& = S_j(Y_{j, \ell-1}+Y_{j, \ell-1}A^{-1}_{j,\ell}) M + (Y_{j, \ell-1}+Y_{j, \ell-1}A^{-1}_{j,\ell}) S_j(M) \\
& = 0,
\end{align*}
where the last equation follows from Proposition \ref{Kernel} and $S_j(M)=0$.

{\bf Case 3.} Assume that $|C|=4$. Let $p_{1}$, $p_{2}$, $p_{3}$, and $p_{4}$ be the four paths in $C$. Since $C$ is a $j$-connected component of $\mathscr{P}_{i,k}$, we assume without loss of generality that
\[
p_{2}=p_{1}\mathscr{A}^{-1}_{N-u,\ell'},\quad p_{3}=p_{1}\mathscr{A}^{-1}_{u,\ell},\quad p_{4}=p_{2}\mathscr{A}^{-1}_{u,\ell}=p_{3}\mathscr{A}^{-1}_{N-u,\ell'},
\]
where $u\neq n-1$ and $\ell, \ell' \in \mathbb{Z}$. The local configurations of $p_{1}$, $p_{2}$, $p_{3}$, and $p_{4}$ near by $(u,\ell)$ and $(N-u,\ell')$ are depicted in  Figure \ref{local configuration of p1p2p3p4}, and the other parts of $p_1$, $p_2$, $p_3$ and $p_4$ are the same.

\begin{figure}
\begin{align*}
\raisebox{0.5cm}{
\begin{tikzpicture}[scale=.5]
\draw[help lines, color=gray!60, dashed] (-2.1,-5.1) grid (8.1, 1.5);
\node at (-1, 1.5) {\scalebox{0.45}{\scri{$u-1$}}};
\node at (0, 1.5) {\scalebox{0.45}{\scri{$u$}}};
\node at (1, 1.5) {\scalebox{0.45}{\scri{$u+1$}}};
\node at (-2.5, 1) {\scalebox{0.45}{\scri{$\ell-1$}}};
\node at (-2.5, 0) {\scalebox{0.45}{\scri{$\ell$}}};
\node at (-2.5, -1) {\scalebox{0.45}{\scri{$\ell+1$}}};
\node at (5, 1.5) {\scalebox{0.45}{\scri{$N-u-1$}}\,\,\,};
\node at (6, 1.5) {\scalebox{0.45}{\scri{$N-u$}}};
\node at (7, 1.5) {\scalebox{0.45}{\,\,\,\,\,\scri{$N-u+1$}}};
\node at (-2.5, -3) {\scalebox{0.45}{\scri{$\ell'-1$}}};
\node at (-2.5, -4) {\scalebox{0.45}{\scri{$\ell'$}}};
\node at (-2.5, -5) {\scalebox{0.45}{\scri{$\ell'+1$}}};
\node at (-2.5, -2) {\scalebox{0.45}{\scri{$\vdots$}}};
\node at (3, 1.5) {\scalebox{0.45}{\scri{$\cdots$}}};
\begin{scope}[every node/.style={minimum size=.1cm,inner sep=0mm,fill,circle}]
\draw[thick] (-1,0) node {} -- (0,1) node {} -- (1,0)node {};
\draw[thick] (5,-4) node {} -- (6,-3) node {} -- (7,-4)node {};
\draw [dotted] (1,0) -- (2,-1)-- (3,-2)-- (4,-3)-- (5,-4);
\end{scope}
\node at (3, -6) {\scri{\text{The path $p_1$}}};
\end{tikzpicture}
}
\raisebox{0.60cm}{
\begin{tikzpicture}[scale=0.5]
\draw[help lines, color=gray!60, dashed] (-2.1,-5.1) grid (8.1, 1.5);
\node at (-1, 1.5) {\scalebox{0.45}{\scri{$u-1$}}};
\node at (0, 1.5) {\scalebox{0.45}{\scri{$u$}}};
\node at (1, 1.5) {\scalebox{0.45}{\scri{$u+1$}}};
\node at (-2.5, 1) {\scalebox{0.45}{\scri{$\ell-1$}}};
\node at (-2.5, 0) {\scalebox{0.45}{\scri{$\ell$}}};
\node at (-2.5, -1) {\scalebox{0.45}{\scri{$\ell+1$}}};
\node at (5, 1.5) {\scalebox{0.45}{\scri{$N-u-1$}}\,\,\,};
\node at (6, 1.5) {\scalebox{0.45}{\scri{$N-u$}}};
\node at (7, 1.5) {\scalebox{0.45}{\,\,\,\,\,\scri{$N-u+1$}}};
\node at (-2.5, -3) {\scalebox{0.45}{\scri{$\ell'-1$}}};
\node at (-2.5, -4) {\scalebox{0.45}{\scri{$\ell'$}}};
\node at (-2.5, -5) {\scalebox{0.45}{\scri{$\ell'+1$}}};
\node at (-2.5, -2) {\scalebox{0.45}{\scri{$\vdots$}}};
\node at (3, 1.5) {\scalebox{0.45}{\scri{$\cdots$}}};
\begin{scope}[every node/.style={minimum size=.1cm,inner sep=0mm,fill,circle}]
\draw[thick] (-1,0) node {} -- (0,1) node {} -- (1,0)node {};
\draw[thick] (5,-4) node {} -- (6,-5) node {} -- (7,-4)node {};
\draw [dotted] (1,0) -- (2,-1)-- (3,-2)-- (4,-3)-- (5,-4);
\end{scope}
\node at (3, -6) {\scri{\text{The path $p_2$}}};
\end{tikzpicture}
} \\
\raisebox{0.5cm}{
\begin{tikzpicture}[scale=0.5]
\draw[help lines, color=gray!60, dashed] (-2.1,-5.1) grid (8.1, 1.5);
\node at (-1, 1.5) {\scalebox{0.45}{\scri{$u-1$}}};
\node at (0, 1.5) {\scalebox{0.45}{\scri{$u$}}};
\node at (1, 1.5) {\scalebox{0.45}{\scri{$u+1$}}};
\node at (-2.5, 1) {\scalebox{0.45}{\scri{$\ell-1$}}};
\node at (-2.5, 0) {\scalebox{0.45}{\scri{$\ell$}}};
\node at (-2.5, -1) {\scalebox{0.45}{\scri{$\ell+1$}}};
\node at (5, 1.5) {\scalebox{0.45}{\scri{$N-u-1$}}\,\,\,};
\node at (6, 1.5) {\scalebox{0.45}{\scri{$N-u$}}};
\node at (7, 1.5) {\scalebox{0.45}{\,\,\,\,\,\scri{$N-u+1$}}};
\node at (-2.5, -3) {\scalebox{0.45}{\scri{$\ell'-1$}}};
\node at (-2.5, -4) {\scalebox{0.45}{\scri{$\ell'$}}};
\node at (-2.5, -5) {\scalebox{0.45}{\scri{$\ell'+1$}}};
\node at (-2.5, -2) {\scalebox{0.45}{\scri{$\vdots$}}};
\node at (3, 1.5) {\scalebox{0.45}{\scri{$\cdots$}}};
\begin{scope}[every node/.style={minimum size=.1cm,inner sep=0mm,fill,circle}]
\draw[thick] (-1,0) node {} -- (0,-1) node {} -- (1,0)node {};
\draw[thick] (5,-4) node {} -- (6,-3) node {} -- (7,-4)node {};
\draw [dotted] (1,0) -- (2,-1)-- (3,-2)-- (4,-3)-- (5,-4);
\end{scope}
\node at (3, -6) {\scri{\text{The path $p_3$}}};
\end{tikzpicture}
}
\raisebox{0.5cm}{
\begin{tikzpicture}[scale=0.5]
\draw[help lines, color=gray!60, dashed] (-2.1,-5.1) grid (8.1, 1.5);
\node at (-1, 1.5) {\scalebox{0.45}{\scri{$u-1$}}};
\node at (0, 1.5) {\scalebox{0.45}{\scri{$u$}}};
\node at (1, 1.5) {\scalebox{0.45}{\scri{$u+1$}}};
\node at (-2.5, 1) {\scalebox{0.45}{\scri{$\ell-1$}}};
\node at (-2.5, 0) {\scalebox{0.45}{\scri{$\ell$}}};
\node at (-2.5, -1) {\scalebox{0.45}{\scri{$\ell+1$}}};
\node at (5, 1.5) {\scalebox{0.45}{\scri{$N-u-1$}}\,\,\,};
\node at (6, 1.5) {\scalebox{0.45}{\scri{$N-u$}}};
\node at (7, 1.5) {\scalebox{0.45}{\,\,\,\,\,\scri{$N-u+1$}}};
\node at (-2.5, -3) {\scalebox{0.45}{\scri{$\ell'-1$}}};
\node at (-2.5, -4) {\scalebox{0.45}{\scri{$\ell'$}}};
\node at (-2.5, -5) {\scalebox{0.45}{\scri{$\ell'+1$}}};
\node at (-2.5, -2) {\scalebox{0.45}{\scri{$\vdots$}}};
\node at (3, 1.5) {\scalebox{0.45}{\scri{$\cdots$}}};
\begin{scope}[every node/.style={minimum size=.1cm,inner sep=0mm,fill,circle}]
\draw[thick] (-1,0) node {} -- (0,-1) node {} -- (1,0)node {};
\draw[thick] (5,-4) node {} -- (6,-5) node {} -- (7,-4)node {};
\draw [dotted] (1,0) -- (2,-1)-- (3,-2)-- (4,-3)-- (5,-4);
\end{scope}
\node at (3, -6) {\scri{\text{The path $p_4$}}};
\end{tikzpicture}
}
\end{align*}
\caption{The local configuration of $p_1$ (top left),  $p_2$ (top right),  $p_3$ (bottom left) and  $p_4$ (bottom right) near by $(u,\ell)$ and $(N-u,\ell')$.} \label{local configuration of p1p2p3p4}
\end{figure}

In this case, we have
\[
m(p_{1})+m(p_{2})+ m(p_3)+m(p_{4})=(Y_{j,\ell-1}+Y_{j,\ell-1}A^{-1}_{j,\ell})(Y_{j,\ell'-1}+Y_{j,\ell'-1}A^{-1}_{j,\ell'})M,
\]
where $M$ is a monomial in $\{Y^{\pm1}_{i,\ell} \mid i\in I, \ell \in \mathbb{Z}\}$ without the factors $Y^{\pm1}_{j,\ell}$ for $\ell\in\mathbb{Z}$. By the Leibniz rule of $S_{j}$ and the Proposition \ref{Kernel}, we conclude that
\[
S_{j}(m(p_{1})+m(p_{2})+ m(p_3)+m(p_{4}))=0,
\]
so $m(p_{1})+m(p_{2})+ m(p_3)+m(p_{4}) \in \text{ker}(S_j)$.

Since $\mathscr{P}_{i,k}$ is a disjoint union of all $j$-connected components, we have
\[
\sum_{p\in \mathscr{P}_{i,k}} m(p) \subset \text{ker}(S_j).
\]
When $j$ runs over the set $I$, we conclude that
\[
\sum_{p\in \mathscr{P}_{i,k}} m(p) \subseteq \bigcap_{j\in I} \text{ker}(S_j) = \chi_{q}([L(Y_{i,k})]).
\]

The reverse inclusion $\chi_{q}([L(Y_{i,k})])\subseteq \sum_{p\in \mathscr{P}_{i,k}} m(p)$ follows from Corollary \ref{equation1}. The proof is completed.
\end{proof}

\begin{remark}
The coefficient of each Laurent monomial in the $q$-character of a fundamental module is $1$ in types $A_n$, $B_{n}$ $C_{n}$ \cite{CM06,H05,KS95}, and type $G_{2}$ \cite[Section 8.4]{Her04}. It is not true for type $D_n$ \cite{CM06,H05,KS95}, types $E_{6}$, $E_{7}$, $E_{8}$ \cite{H07, Nak01, Nak10}, and type $F_{4}$ \cite[Appendix 8]{H05}. 
\end{remark}

In practice, the horizontal coordinates in our figures are labeled by $\mathscr{S}$ when we draw paths. The horizontal coordinates in our figures are labeled by the images of $\mathscr{S}\backslash \{0,N\}$ under $\overline{\cdot}$ when we assign monomials or binomials to paths.

We give an example of type $D_4$ to illustrate our Theorem \ref{path formula for fundamental modules}.

\begin{example}\label{example of D4}
Let $\mathfrak{g}=\mathfrak{so}_8(\mathbb{C})$. All paths in $\mathscr{P}_{1,0}$, $\mathscr{P}_{2,1}$, and $\mathscr{P}_{3,0}$ are shown in Figure \ref{all paths in D4}, and monomials or binomials associated to paths in $\mathscr{P}_{1,0}$, $\mathscr{P}_{2,1}$, and $\mathscr{P}_{3,0}$ are shown in Figure \ref{q-character of 1_0},  Figures \ref{q-character of 2_1 1} and \ref{q-character of 2_1 2}, and Figure \ref{q-character of 3_0} respectively. By Theorem \ref{path formula for fundamental modules},
\begin{align*}
\chi_{q}([L(Y_{1,0})])  & = Y_{1,0}+Y^{-1}_{1,2}Y_{2,1}+Y^{-1}_{2,3}Y_{3,2}Y_{4,2}+Y^{-1}_{3,4}Y_{4,2}+Y_{3,2}Y^{-1}_{4,4}+Y_{2,3}Y^{-1}_{3,4}Y^{-1}_{4,4}\\
& + Y_{1,4}Y^{-1}_{2,5}+Y^{-1}_{1,6}, \\
\chi_{q}([L(Y_{2,1})]) & = Y_{2,1}+Y_{1,2}Y^{-1}_{2,3}Y_{3,2}Y_{4,2}+Y^{-1}_{1,4}Y_{3,2}Y_{4,2}+Y_{1,2}Y^{-1}_{3,4}Y_{4,2}+Y_{1,2}Y_{3,2}Y^{-1}_{4,4}\\
& +Y^{-1}_{1,4}Y_{2,3}Y^{-1}_{3,4}Y_{4,2}+Y^{-1}_{1,4}Y_{2,3}Y_{3,2}Y^{-1}_{4,4}+Y_{1,2}Y_{2,3}Y^{-1}_{3,4}Y^{-1}_{4,4} + Y^{-1}_{2,5}Y_{3,2}Y_{3,4} \\
& + Y^{-1}_{2,5}Y_{4,2}Y_{4,4} + Y^{-1}_{1,4}Y^{2}_{2,3}Y^{-1}_{3,4}Y^{-1}_{4,4} + Y_{1,2}Y_{1,4}Y^{-1}_{2,5} + Y_{3,2}Y^{-1}_{3,6} + Y_{4,2}Y^{-1}_{4,6}\\
& + 2 Y_{2,3}Y^{-1}_{2,5} + Y_{1,2}Y^{-1}_{1,6} + Y_{2,3}Y^{-1}_{3,4}Y^{-1}_{3,6} + Y_{2,3}Y^{-1}_{4,4}Y^{-1}_{4,6} + Y_{1,4}Y^{-2}_{2,5}Y_{3,4}Y_{4,4}\\ 
& + Y^{-1}_{1,4}Y^{-1}_{1,6}Y_{2,3} + Y_{1,4}Y^{-1}_{2,5}Y^{-1}_{3,6}Y_{4,4}+Y_{1,4}Y^{-1}_{2,5}Y_{3,4}Y^{-1}_{4,6} + Y^{-1}_{1,6}Y^{-1}_{2,5}Y_{3,4}Y_{4,4} \\
& +Y_{1,4}Y^{-1}_{3,6}Y^{-1}_{4,6} + Y^{-1}_{1,6}Y^{-1}_{3,6}Y_{4,4} + Y^{-1}_{1,6}Y_{3,4}Y^{-1}_{4,6} + Y^{-1}_{1,6}Y_{2,5}Y^{-1}_{3,6}Y^{-1}_{4,6}+Y^{-1}_{2,7}, \\
\chi_{q}([L(Y_{3,0})]) & = Y_{3,0}+Y_{2,1}Y^{-1}_{3,2}+Y_{1,2}Y^{-1}_{2,3}Y_{4,2}+Y_{1,2}Y^{-1}_{4,4}+Y^{-1}_{1,4}Y_{4,2}+Y^{-1}_{1,4}Y_{2,3}Y^{-1}_{4,4} \\
& + Y^{-1}_{2,5}Y_{3,4}+Y^{-1}_{3,6},
\end{align*}
and after switching $Y_{3,\ell}$ with $Y_{4,\ell}$ in $\chi_{q}([L(Y_{3,0})])$, with $\ell\in \mathbb{Z}$, we have
\begin{align*}
\chi_{q}([L(Y_{4,0})]) & = Y_{4,0}+Y_{2,1}Y^{-1}_{4,2}+Y_{1,2}Y^{-1}_{2,3}Y_{3,2}+Y^{-1}_{1,4}Y_{3,2}+Y_{1,2}Y^{-1}_{3,4}+Y^{-1}_{1,4}Y_{2,3}Y^{-1}_{3,4}\\
& +Y^{-1}_{2,5}Y_{4,4}+Y^{-1}_{4,6}.
\end{align*}

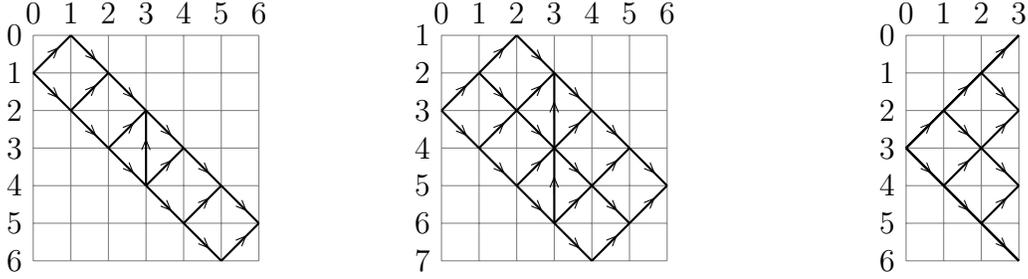
\begin{figure}
\resizebox{1.0\width}{1.0\height}{
\begin{minipage}[b]{0.33\linewidth}
\centerline{
\begin{tikzpicture}
\draw[step=.5cm,gray,thin] (-0.5,5) grid (2.5,8) (-0.5,5)--(2.5,5);
\begin{scope}[thick, every node/.style={sloped,allow upside down}]
\draw (-0.5,7.5)--node {\midarrow}(0,7);
\draw (0,7)--node {\midarrow}(0.5,6.5);
\draw (0.5,6.5)--node {\midarrow}(1,6);
\draw (1,6)--node {\midarrow}(1.5,5.5);
\draw (1.5,5.5)--node {\midarrow}(2,5);
\draw (2,5)--node {\midarrow}(2.5,5.5);
\draw (-0.5,7.5)--node {\midarrow}(0,8);
\draw (0,8)--node {\midarrow}(0.5,7.5);
\draw (0.5,7.5)--node {\midarrow}(1,7);
\draw (1,7)--node {\midarrow}(1.5,6.5);
\draw (1.5,6.5)--node {\midarrow}(2,6);
\draw (2,6)--node {\midarrow}(2.5,5.5);
\draw (0,7)--node {\midarrow}(0.5,7.5);
\draw (0.5,6.5)--node {\midarrow}(1,7);
\draw (1,6)--node {\midarrow}(1.5,6.5);
\draw (1.5,5.5)--node {\midarrow}(2,6);
\draw (1,6)--node {\midarrow}(1,7);
\end{scope}
\node at (-0.5,8.3) {$0$};
\node at (0,8.3) {$1$};
\node at (0.5,8.3) {$2$};
\node at (1,8.3) {$3$};
\node at (1.5,8.3) {$4$};
\node at (2,8.3) {$5$};
\node at (2.5,8.3) {$6$};
\node [left] at (-0.5,8) {$0$};
\node [left] at (-0.5,7.5) {$1$};
\node [left] at (-0.5,7) {$2$};
\node [left] at (-0.5,6.5) {$3$};
\node [left] at (-0.5,6) {$4$};
\node [left] at (-0.5,5.5) {$5$};
\node [left] at (-0.5,5) {$6$};
\end{tikzpicture}}
\end{minipage}
\begin{minipage}[b]{0.33\linewidth}
\centerline{
\begin{tikzpicture}
\draw[step=.5cm,gray,thin] (-0.5,5) grid (2.5,8) (-0.5,5)--(2.5,5);
\begin{scope}[thick, every node/.style={sloped,allow upside down}]
\draw (-0.5,7)--node {\midarrow}(0,6.5);
\draw (0,6.5)--node {\midarrow}(0.5,6);
\draw (0.5,6)--node {\midarrow}(1,5.5);
\draw (1,5.5)--node {\midarrow}(1.5,6);
\draw (1.5,6)--node {\midarrow}(2,6.5);
\draw (2,6.5)--node {\midarrow}(2.5,6);
\draw (1,5.5)--node {\midarrow}(1.5,5);
\draw (1.5,5)--node {\midarrow}(2,5.5);
\draw (2,5.5)--node {\midarrow}(2.5,6);
\draw (1.5,6)--node {\midarrow}(2,5.5);
\draw (-0.5,7)--node {\midarrow}(0,7.5);
\draw (0,7.5)--node {\midarrow}(0.5,8);
\draw (0.5,8)--node {\midarrow}(1,7.5);
\draw (1,7.5)--node {\midarrow}(1.5,7);
\draw (1.5,7)--node {\midarrow}(2,6.5);
\draw (0,7.5)--node {\midarrow}(0.5,7);
\draw (0.5,7)--node {\midarrow}(1,6.5);
\draw (1,6.5)--node {\midarrow}(1.5,6);
\draw (0,6.5)--node {\midarrow}(0.5,7);
\draw (0.5,7)--node {\midarrow}(1,7.5);
\draw (0.5,6)--node {\midarrow}(1,6.5);
\draw (1,6.5)--node {\midarrow}(1.5,7);
\draw (1,5.5)--node {\midarrow}(1,6.5);
\draw (1,6.5)--node {\midarrow}(1,7.5);
\end{scope}
\node at (-0.5,8.3) {$0$};
\node at (0,8.3) {$1$};
\node at (0.5,8.3) {$2$};
\node at (1,8.3) {$3$};
\node at (1.5,8.3) {$4$};
\node at (2,8.3) {$5$};
\node at (2.5,8.3) {$6$};
\node [left] at (-0.5,8) {$1$};
\node [left] at (-0.5,7.5) {$2$};
\node [left] at (-0.5,7) {$3$};
\node [left] at (-0.5,6.5) {$4$};
\node [left] at (-0.5,6) {$5$};
\node [left] at (-0.5,5.5) {$6$};
\node [left] at (-0.5,5) {$7$};
\end{tikzpicture}}
\end{minipage}
\begin{minipage}[b]{0.33\linewidth}
\centerline{
\begin{tikzpicture}
\draw[step=.5cm,gray,thin] (-0.5,5) grid (1,8) (-0.5,5)--(1,5);
\node at (-0.5,8.3) {$0$};
\node at (0,8.3) {$1$};
\node at (0.5,8.3) {$2$};
\node at (1,8.3) {$3$};
\node [left] at (-0.5,8) {$0$};
\node [left] at (-0.5,7.5) {$1$};
\node [left] at (-0.5,7) {$2$};
\node [left] at (-0.5,6.5) {$3$};
\node [left] at (-0.5,6) {$4$};
\node [left] at (-0.5,5.5) {$5$};
\node [left] at (-0.5,5) {$6$};
\draw[thick] (-0.5,6.5)--(0,7)--(0.5,7.5)--(1,8);
\draw[thick] (-0.5,6.5)--(0,6)--(0.5,5.5)--(1,5);
\begin{scope}[thick, every node/.style={sloped,allow upside down}]
\draw (-0.5,6.5)--node {\midarrow}(0,7);
\draw (0,7)--node {\midarrow}(0.5,7.5);
\draw (0,7)--node {\midarrow}(0.5,6.5);
\draw (0.5,6.5)--node {\midarrow}(1,7);
\draw (0.5,6.5)--node {\midarrow}(1,6);
\draw (0.5,7.5)--node {\midarrow}(1,7);
\draw (0.5,7.5)--node {\midarrow}(1,8);
\draw (-0.5,6.5)--node {\midarrow}(0,6);
\draw (0,6)--node {\midarrow}(0.5,6.5);
\draw (0,6)--node {\midarrow}(0.5,5.5);
\draw (0.5,5.5)--node {\midarrow}(1,6);
\draw (0.5,5.5)--node {\midarrow}(1,5);
\end{scope}
\end{tikzpicture}}
\end{minipage}}
\caption{All paths in $\mathscr{P}_{1,0}$ (left), $\mathscr{P}_{2,1}$ (middle), and $\mathscr{P}_{3,0}$ (right).}\label{all paths in D4}
\end{figure}

Note that the coefficient of the Laurent monomial $Y_{2,3}Y^{-1}_{2,5}$ appearing in $\chi_{q}([L(Y_{2,1})])$ is $2$. Bittmann \cite[Section 8]{Bit21b} computed explicitly the $(q,t)$-character of the fundamental module $L(Y_{2,k})$, for some $k\in \mathbb{Z}$, by quantum cluster mutations. When $t=1$, the $(q,t)$-character is the $q$-character.

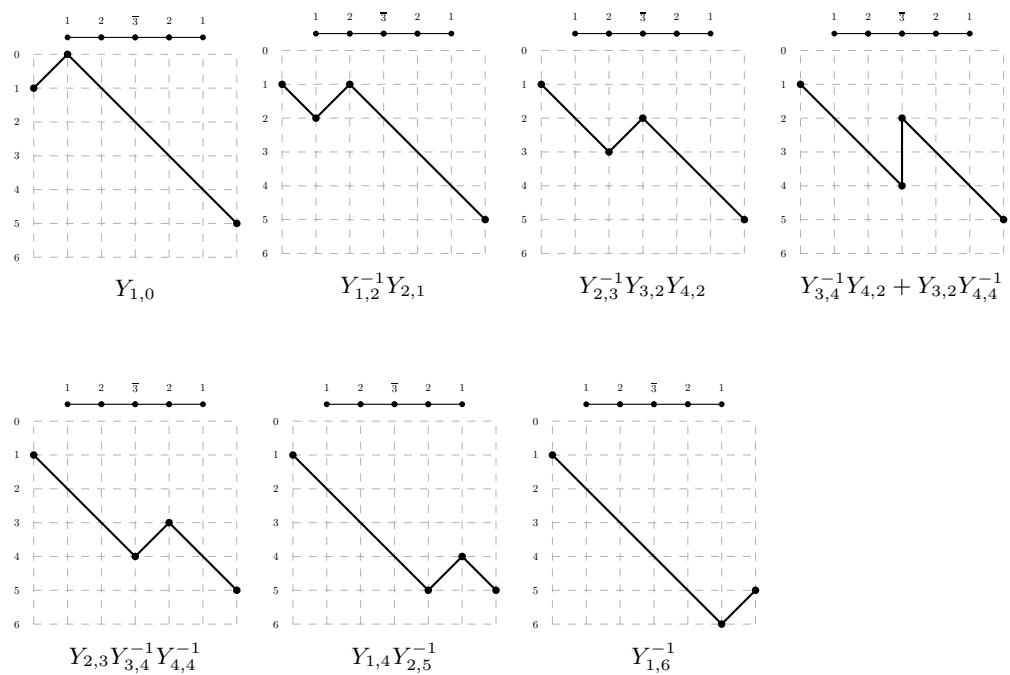
\begin{figure}
\begin{align*}
&
\raisebox{0.5cm}{
\begin{tikzpicture}[scale=.45]
\draw[help lines, color=gray!60, dashed] (-2.0,-5.0) grid (4.0, 1.0);
\draw[fill] (-1,1.5) circle (2pt)--(0,1.5) circle (2pt);
\draw[fill] (0,1.5) circle (2pt)--(1,1.5) circle (2pt);
\draw[fill] (1,1.5) circle (2pt)--(2,1.5) circle (2pt);
\draw[fill] (2,1.5) circle (2pt)--(3,1.5) circle (2pt);
\node at (-1, 2) {\scalebox{0.45}{\scri{$1$}}};
\node at (0, 2) {\scalebox{0.45}{\scri{$2$}}};
\node at (1, 2) {\scalebox{0.45}{\scri{$\overline{3}$}}};
\node at (2, 2) {\scalebox{0.45}{\scri{$2$}}};
\node at (3, 2) {\scalebox{0.45}{\scri{$1$}}};
\node at (-2.5, 1) {\scalebox{0.45}{\scri{$0$}}};
\node at (-2.5, 0) {\scalebox{0.45}{\scri{$1$}}};
\node at (-2.5, -1) {\scalebox{0.45}{\scri{$2$}}};
\node at (-2.5, -2) {\scalebox{0.45}{\scri{$3$}}};
\node at (-2.5, -3) {\scalebox{0.45}{\scri{$4$}}};
\node at (-2.5, -4) {\scalebox{0.45}{\scri{$5$}}};
\node at (-2.5, -5) {\scalebox{0.45}{\scri{$6$}}};
\begin{scope}[every node/.style={minimum size=.1cm,inner sep=0mm,fill,circle}]
\draw[thick] (-2,0) node {} -- (-1,1) node {} -- (4,-4)node {};
\end{scope}
\node at (1, -6) {\scri{\text{$Y_{1,0}$}}};
\end{tikzpicture}}
\raisebox{0.5cm}{
\begin{tikzpicture}[scale=0.45]
\draw[help lines, color=gray!60, dashed] (-2.0,-5.0) grid (4.0, 1.0);
\draw[fill] (-1,1.5) circle (2pt)--(0,1.5) circle (2pt);
\draw[fill] (0,1.5) circle (2pt)--(1,1.5) circle (2pt);
\draw[fill] (1,1.5) circle (2pt)--(2,1.5) circle (2pt);
\draw[fill] (2,1.5) circle (2pt)--(3,1.5) circle (2pt);
\node at (-1, 2) {\scalebox{0.45}{\scri{$1$}}};
\node at (0, 2) {\scalebox{0.45}{\scri{$2$}}};
\node at (1, 2) {\scalebox{0.45}{\scri{$\overline{3}$}}};
\node at (2, 2) {\scalebox{0.45}{\scri{$2$}}};
\node at (3, 2) {\scalebox{0.45}{\scri{$1$}}};
\node at (-2.5, 1) {\scalebox{0.45}{\scri{$0$}}};
\node at (-2.5, 0) {\scalebox{0.45}{\scri{$1$}}};
\node at (-2.5, -1) {\scalebox{0.45}{\scri{$2$}}};
\node at (-2.5, -2) {\scalebox{0.45}{\scri{$3$}}};
\node at (-2.5, -3) {\scalebox{0.45}{\scri{$4$}}};
\node at (-2.5, -4) {\scalebox{0.45}{\scri{$5$}}};
\node at (-2.5, -5) {\scalebox{0.45}{\scri{$6$}}};
\begin{scope}[every node/.style={minimum size=.1cm,inner sep=0mm,fill,circle}]
\draw[thick] (-2,0) node {} -- (-1,-1) node {} -- (0,0)node {} -- (4,-4)node {};
\end{scope}
\node at (1, -6) {\scri{\text{$Y^{-1}_{1,2}Y_{2,1}$}}};
\end{tikzpicture}
} 
\raisebox{0.5cm}{
\begin{tikzpicture}[scale=0.45]
\draw[help lines, color=gray!60, dashed] (-2.0,-5.0) grid (4.0, 1.0);
\draw[fill] (-1,1.5) circle (2pt)--(0,1.5) circle (2pt);
\draw[fill] (0,1.5) circle (2pt)--(1,1.5) circle (2pt);
\draw[fill] (1,1.5) circle (2pt)--(2,1.5) circle (2pt);
\draw[fill] (2,1.5) circle (2pt)--(3,1.5) circle (2pt);
\node at (-1, 2) {\scalebox{0.45}{\scri{$1$}}};
\node at (0, 2) {\scalebox{0.45}{\scri{$2$}}};
\node at (1, 2) {\scalebox{0.45}{\scri{$\overline{3}$}}};
\node at (2, 2) {\scalebox{0.45}{\scri{$2$}}};
\node at (3, 2) {\scalebox{0.45}{\scri{$1$}}};
\node at (-2.5, 1) {\scalebox{0.45}{\scri{$0$}}};
\node at (-2.5, 0) {\scalebox{0.45}{\scri{$1$}}};
\node at (-2.5, -1) {\scalebox{0.45}{\scri{$2$}}};
\node at (-2.5, -2) {\scalebox{0.45}{\scri{$3$}}};
\node at (-2.5, -3) {\scalebox{0.45}{\scri{$4$}}};
\node at (-2.5, -4) {\scalebox{0.45}{\scri{$5$}}};
\node at (-2.5, -5) {\scalebox{0.45}{\scri{$6$}}};
\begin{scope}[every node/.style={minimum size=.1cm,inner sep=0mm,fill,circle}]
\draw[thick] (-2,0) node {} -- (0,-2) node {} -- (1,-1)node {} -- (4,-4)node {};
\end{scope}
\node at (1, -6) {\scri{\text{$Y^{-1}_{2,3}Y_{3,2}Y_{4,2}$}}};
\end{tikzpicture}
}
\raisebox{0.5cm}{
\begin{tikzpicture}[scale=0.45]
\draw[help lines, color=gray!60, dashed] (-2.0,-5.0) grid (4.0, 1.0);
\draw[fill] (-1,1.5) circle (2pt)--(0,1.5) circle (2pt);
\draw[fill] (0,1.5) circle (2pt)--(1,1.5) circle (2pt);
\draw[fill] (1,1.5) circle (2pt)--(2,1.5) circle (2pt);
\draw[fill] (2,1.5) circle (2pt)--(3,1.5) circle (2pt);
\node at (-1, 2) {\scalebox{0.45}{\scri{$1$}}};
\node at (0, 2) {\scalebox{0.45}{\scri{$2$}}};
\node at (1, 2) {\scalebox{0.45}{\scri{$\overline{3}$}}};
\node at (2, 2) {\scalebox{0.45}{\scri{$2$}}};
\node at (3, 2) {\scalebox{0.45}{\scri{$1$}}};
\node at (-2.5, 1) {\scalebox{0.45}{\scri{$0$}}};
\node at (-2.5, 0) {\scalebox{0.45}{\scri{$1$}}};
\node at (-2.5, -1) {\scalebox{0.45}{\scri{$2$}}};
\node at (-2.5, -2) {\scalebox{0.45}{\scri{$3$}}};
\node at (-2.5, -3) {\scalebox{0.45}{\scri{$4$}}};
\node at (-2.5, -4) {\scalebox{0.45}{\scri{$5$}}};
\node at (-2.5, -5) {\scalebox{0.45}{\scri{$6$}}};
\begin{scope}[every node/.style={minimum size=.1cm,inner sep=0mm,fill,circle}]
\draw[thick] (-2,0) node {} -- (1,-3) node {} -- (1,-1) node {} -- (4,-4)node {};
\end{scope}
\node at (1, -6) {\scri{\text{$Y^{-1}_{3,4}Y_{4,2}+Y_{3,2}Y^{-1}_{4,4}$}}};
\end{tikzpicture}
}
\\
&
\raisebox{0.5cm}{
\begin{tikzpicture}[scale=0.45]
\draw[help lines, color=gray!60, dashed] (-2.0,-5.0) grid (4.0, 1.0);
\draw[fill] (-1,1.5) circle (2pt)--(0,1.5) circle (2pt);
\draw[fill] (0,1.5) circle (2pt)--(1,1.5) circle (2pt);
\draw[fill] (1,1.5) circle (2pt)--(2,1.5) circle (2pt);
\draw[fill] (2,1.5) circle (2pt)--(3,1.5) circle (2pt);
\node at (-1, 2) {\scalebox{0.45}{\scri{$1$}}};
\node at (0, 2) {\scalebox{0.45}{\scri{$2$}}};
\node at (1, 2) {\scalebox{0.45}{\scri{$\overline{3}$}}};
\node at (2, 2) {\scalebox{0.45}{\scri{$2$}}};
\node at (3, 2) {\scalebox{0.45}{\scri{$1$}}};
\node at (-2.5, 1) {\scalebox{0.45}{\scri{$0$}}};
\node at (-2.5, 0) {\scalebox{0.45}{\scri{$1$}}};
\node at (-2.5, -1) {\scalebox{0.45}{\scri{$2$}}};
\node at (-2.5, -2) {\scalebox{0.45}{\scri{$3$}}};
\node at (-2.5, -3) {\scalebox{0.45}{\scri{$4$}}};
\node at (-2.5, -4) {\scalebox{0.45}{\scri{$5$}}};
\node at (-2.5, -5) {\scalebox{0.45}{\scri{$6$}}};
\begin{scope}[every node/.style={minimum size=.1cm,inner sep=0mm,fill,circle}]
\draw[thick] (-2,0) node {} -- (1,-3) node {} -- (2,-2)node {}-- (4,-4)node {};
\end{scope}
\node at (1, -6) {\scri{\text{$Y_{2,3}Y^{-1}_{3,4}Y^{-1}_{4,4}$}}};
\end{tikzpicture}
}
\raisebox{0.5cm}{
\begin{tikzpicture}[scale=0.45]
\draw[help lines, color=gray!60, dashed] (-2.0,-5.0) grid (4.0, 1.0);
\draw[fill] (-1,1.5) circle (2pt)--(0,1.5) circle (2pt);
\draw[fill] (0,1.5) circle (2pt)--(1,1.5) circle (2pt);
\draw[fill] (1,1.5) circle (2pt)--(2,1.5) circle (2pt);
\draw[fill] (2,1.5) circle (2pt)--(3,1.5) circle (2pt);
\node at (-1, 2) {\scalebox{0.45}{\scri{$1$}}};
\node at (0, 2) {\scalebox{0.45}{\scri{$2$}}};
\node at (1, 2) {\scalebox{0.45}{\scri{$\overline{3}$}}};
\node at (2, 2) {\scalebox{0.45}{\scri{$2$}}};
\node at (3, 2) {\scalebox{0.45}{\scri{$1$}}};
\node at (-2.5, 1) {\scalebox{0.45}{\scri{$0$}}};
\node at (-2.5, 0) {\scalebox{0.45}{\scri{$1$}}};
\node at (-2.5, -1) {\scalebox{0.45}{\scri{$2$}}};
\node at (-2.5, -2) {\scalebox{0.45}{\scri{$3$}}};
\node at (-2.5, -3) {\scalebox{0.45}{\scri{$4$}}};
\node at (-2.5, -4) {\scalebox{0.45}{\scri{$5$}}};
\node at (-2.5, -5) {\scalebox{0.45}{\scri{$6$}}};
\begin{scope}[every node/.style={minimum size=.1cm,inner sep=0mm,fill,circle}]
\draw[thick] (-2,0) node {} -- (2,-4) node {} -- (3,-3)node {}-- (4,-4)node {};
\end{scope}
\node at (1, -6) {\scri{\text{$Y_{1,4}Y^{-1}_{2,5}$}}};
\end{tikzpicture}
}
\raisebox{0.5cm}{
\begin{tikzpicture}[scale=0.45]
\draw[help lines, color=gray!60, dashed] (-2.0,-5.0) grid (4.0, 1.0);
\draw[fill] (-1,1.5) circle (2pt)--(0,1.5) circle (2pt);
\draw[fill] (0,1.5) circle (2pt)--(1,1.5) circle (2pt);
\draw[fill] (1,1.5) circle (2pt)--(2,1.5) circle (2pt);
\draw[fill] (2,1.5) circle (2pt)--(3,1.5) circle (2pt);
\node at (-1, 2) {\scalebox{0.45}{\scri{$1$}}};
\node at (0, 2) {\scalebox{0.45}{\scri{$2$}}};
\node at (1, 2) {\scalebox{0.45}{\scri{$\overline{3}$}}};
\node at (2, 2) {\scalebox{0.45}{\scri{$2$}}};
\node at (3, 2) {\scalebox{0.45}{\scri{$1$}}};
\node at (-2.5, 1) {\scalebox{0.45}{\scri{$0$}}};
\node at (-2.5, 0) {\scalebox{0.45}{\scri{$1$}}};
\node at (-2.5, -1) {\scalebox{0.45}{\scri{$2$}}};
\node at (-2.5, -2) {\scalebox{0.45}{\scri{$3$}}};
\node at (-2.5, -3) {\scalebox{0.45}{\scri{$4$}}};
\node at (-2.5, -4) {\scalebox{0.45}{\scri{$5$}}};
\node at (-2.5, -5) {\scalebox{0.45}{\scri{$6$}}};
\begin{scope}[every node/.style={minimum size=.1cm,inner sep=0mm,fill,circle}]
\draw[thick] (-2,0) node {} -- (3,-5) node {} -- (4,-4)node {};
\end{scope}
\node at (1,-6) {\scri{\text{$Y^{-1}_{1,6}$}}};
\end{tikzpicture}
}
\end{align*}
\caption{Monomials or binomials associated to paths in $\mathscr{P}_{1,0}$.} \label{q-character of 1_0}
\end{figure}

\begin{figure}
\begin{align*}
&
\raisebox{0.5cm}{
\begin{tikzpicture}[scale=.45]
\draw[help lines, color=gray!60, dashed] (-2.0,-5.0) grid (4.0, 1.0);
\draw[fill] (-1,1.5) circle (2pt)--(0,1.5) circle (2pt);
\draw[fill] (0,1.5) circle (2pt)--(1,1.5) circle (2pt);
\draw[fill] (1,1.5) circle (2pt)--(2,1.5) circle (2pt);
\draw[fill] (2,1.5) circle (2pt)--(3,1.5) circle (2pt);
\node at (-1, 2) {\scalebox{0.45}{\scri{$1$}}};
\node at (0, 2) {\scalebox{0.45}{\scri{$2$}}};
\node at (1, 2) {\scalebox{0.45}{\scri{$\overline{3}$}}};
\node at (2, 2) {\scalebox{0.45}{\scri{$2$}}};
\node at (3, 2) {\scalebox{0.45}{\scri{$1$}}};
\node at (-2.5, 1) {\scalebox{0.45}{\scri{$1$}}};
\node at (-2.5, 0) {\scalebox{0.45}{\scri{$2$}}};
\node at (-2.5, -1) {\scalebox{0.45}{\scri{$3$}}};
\node at (-2.5, -2) {\scalebox{0.45}{\scri{$4$}}};
\node at (-2.5, -3) {\scalebox{0.45}{\scri{$5$}}};
\node at (-2.5, -4) {\scalebox{0.45}{\scri{$6$}}};
\node at (-2.5, -5) {\scalebox{0.45}{\scri{$7$}}};
\begin{scope}[every node/.style={minimum size=.1cm,inner sep=0mm,fill,circle}]
\draw[thick] (-2,-1) node {} -- (0,1) node {} -- (4,-3)node {};
\end{scope}
\node at (1, -6) {\scri{\text{$Y_{2,1}$}}};
\end{tikzpicture}
}
\raisebox{0.5cm}{
\begin{tikzpicture}[scale=0.45]
\draw[help lines, color=gray!60, dashed] (-2.0,-5.0) grid (4.0, 1.0);
\draw[fill] (-1,1.5) circle (2pt)--(0,1.5) circle (2pt);
\draw[fill] (0,1.5) circle (2pt)--(1,1.5) circle (2pt);
\draw[fill] (1,1.5) circle (2pt)--(2,1.5) circle (2pt);
\draw[fill] (2,1.5) circle (2pt)--(3,1.5) circle (2pt);
\node at (-1, 2) {\scalebox{0.45}{\scri{$1$}}};
\node at (0, 2) {\scalebox{0.45}{\scri{$2$}}};
\node at (1, 2) {\scalebox{0.45}{\scri{$\overline{3}$}}};
\node at (2, 2) {\scalebox{0.45}{\scri{$2$}}};
\node at (3, 2) {\scalebox{0.45}{\scri{$1$}}};
\node at (-2.5, 1) {\scalebox{0.45}{\scri{$1$}}};
\node at (-2.5, 0) {\scalebox{0.45}{\scri{$2$}}};
\node at (-2.5, -1) {\scalebox{0.45}{\scri{$3$}}};
\node at (-2.5, -2) {\scalebox{0.45}{\scri{$4$}}};
\node at (-2.5, -3) {\scalebox{0.45}{\scri{$5$}}};
\node at (-2.5, -4) {\scalebox{0.45}{\scri{$6$}}};
\node at (-2.5, -5) {\scalebox{0.45}{\scri{$7$}}};
\begin{scope}[every node/.style={minimum size=.1cm,inner sep=0mm,fill,circle}]
\draw[thick] (-2,-1) node {} -- (-1,0) node {} -- (0,-1)node {} -- (1,0) node {} -- (4,-3)node {};
\end{scope}
\node at (1, -6) {\scri{\text{$Y_{1,2}Y^{-1}_{2,3}Y_{3,2}Y_{4,2}$}}};
\end{tikzpicture}
}
\raisebox{0.5cm}{
\begin{tikzpicture}[scale=0.45]
\draw[help lines, color=gray!60, dashed] (-2.0,-5.0) grid (4.0, 1.0);
\draw[fill] (-1,1.5) circle (2pt)--(0,1.5) circle (2pt);
\draw[fill] (0,1.5) circle (2pt)--(1,1.5) circle (2pt);
\draw[fill] (1,1.5) circle (2pt)--(2,1.5) circle (2pt);
\draw[fill] (2,1.5) circle (2pt)--(3,1.5) circle (2pt);
\node at (-1, 2) {\scalebox{0.45}{\scri{$1$}}};
\node at (0, 2) {\scalebox{0.45}{\scri{$2$}}};
\node at (1, 2) {\scalebox{0.45}{\scri{$\overline{3}$}}};
\node at (2, 2) {\scalebox{0.45}{\scri{$2$}}};
\node at (3, 2) {\scalebox{0.45}{\scri{$1$}}};
\node at (-2.5, 1) {\scalebox{0.45}{\scri{$1$}}};
\node at (-2.5, 0) {\scalebox{0.45}{\scri{$2$}}};
\node at (-2.5, -1) {\scalebox{0.45}{\scri{$3$}}};
\node at (-2.5, -2) {\scalebox{0.45}{\scri{$4$}}};
\node at (-2.5, -3) {\scalebox{0.45}{\scri{$5$}}};
\node at (-2.5, -4) {\scalebox{0.45}{\scri{$6$}}};
\node at (-2.5, -5) {\scalebox{0.45}{\scri{$7$}}};
\begin{scope}[every node/.style={minimum size=.1cm,inner sep=0mm,fill,circle}]
\draw[thick] (-2,-1) node {} -- (-1,-2) node {} -- (1,0)node {} -- (4,-3)node {};
\end{scope}
\node at (1, -6) {\scri{\text{$Y^{-1}_{1,4}Y_{3,2}Y_{4,2}$}}};
\end{tikzpicture}
}
\raisebox{0.5cm}{
\begin{tikzpicture}[scale=.45]
\draw[help lines, color=gray!60, dashed] (-2.0,-5.0) grid (4.0, 1.0);
\draw[fill] (-1,1.5) circle (2pt)--(0,1.5) circle (2pt);
\draw[fill] (0,1.5) circle (2pt)--(1,1.5) circle (2pt);
\draw[fill] (1,1.5) circle (2pt)--(2,1.5) circle (2pt);
\draw[fill] (2,1.5) circle (2pt)--(3,1.5) circle (2pt);
\node at (-1, 2) {\scalebox{0.45}{\scri{$1$}}};
\node at (0, 2) {\scalebox{0.45}{\scri{$2$}}};
\node at (1, 2) {\scalebox{0.45}{\scri{$\overline{3}$}}};
\node at (2, 2) {\scalebox{0.45}{\scri{$2$}}};
\node at (3, 2) {\scalebox{0.45}{\scri{$1$}}};
\node at (-2.5, 1) {\scalebox{0.45}{\scri{$1$}}};
\node at (-2.5, 0) {\scalebox{0.45}{\scri{$2$}}};
\node at (-2.5, -1) {\scalebox{0.45}{\scri{$3$}}};
\node at (-2.5, -2) {\scalebox{0.45}{\scri{$4$}}};
\node at (-2.5, -3) {\scalebox{0.45}{\scri{$5$}}};
\node at (-2.5, -4) {\scalebox{0.45}{\scri{$6$}}};
\node at (-2.5, -5) {\scalebox{0.45}{\scri{$7$}}};
\begin{scope}[every node/.style={minimum size=.1cm,inner sep=0mm,fill,circle}]
\draw[thick] (-2,-1) node {} -- (-1,0) node {}-- (1,-2) node {} -- (1,0) node {}-- (4,-3)node {};
\end{scope}
\node at (1, -6) {\scalebox{0.8}{\scri{\text{$Y_{1,2}Y^{-1}_{3,4}Y_{4,2}+Y_{1,2}Y_{3,2}Y^{-1}_{4,4}$}}}};
\end{tikzpicture}
} \\
& \raisebox{0.5cm}{
\begin{tikzpicture}[scale=.45]
\draw[help lines, color=gray!60, dashed] (-2.0,-5.0) grid (4.0, 1.0);
\draw[fill] (-1,1.5) circle (2pt)--(0,1.5) circle (2pt);
\draw[fill] (0,1.5) circle (2pt)--(1,1.5) circle (2pt);
\draw[fill] (1,1.5) circle (2pt)--(2,1.5) circle (2pt);
\draw[fill] (2,1.5) circle (2pt)--(3,1.5) circle (2pt);
\node at (-1, 2) {\scalebox{0.45}{\scri{$1$}}};
\node at (0, 2) {\scalebox{0.45}{\scri{$2$}}};
\node at (1, 2) {\scalebox{0.45}{\scri{$\overline{3}$}}};
\node at (2, 2) {\scalebox{0.45}{\scri{$2$}}};
\node at (3, 2) {\scalebox{0.45}{\scri{$1$}}};
\node at (-2.5, 1) {\scalebox{0.45}{\scri{$1$}}};
\node at (-2.5, 0) {\scalebox{0.45}{\scri{$2$}}};
\node at (-2.5, -1) {\scalebox{0.45}{\scri{$3$}}};
\node at (-2.5, -2) {\scalebox{0.45}{\scri{$4$}}};
\node at (-2.5, -3) {\scalebox{0.45}{\scri{$5$}}};
\node at (-2.5, -4) {\scalebox{0.45}{\scri{$6$}}};
\node at (-2.5, -5) {\scalebox{0.45}{\scri{$7$}}};
\begin{scope}[every node/.style={minimum size=.1cm,inner sep=0mm,fill,circle}]
\draw[thick] (-2,-1) node {} -- (-1,-2) node {}-- (0,-1) node {} -- (1,-2) node {}-- (1,0) node {}-- (4,-3)node {};
\end{scope}
\node at (1, -6) {\scalebox{0.65}{\scri{\text{$Y^{-1}_{1,4}Y_{2,3}Y^{-1}_{3,4}Y_{4,2}+Y^{-1}_{1,4}Y_{2,3}Y_{3,2}Y^{-1}_{4,4}$}}}};
\end{tikzpicture} 
}
\raisebox{0.5cm}{
\begin{tikzpicture}[scale=0.45]
\draw[help lines, color=gray!60, dashed] (-2.0,-5.0) grid (4.0, 1.0);
\draw[fill] (-1,1.5) circle (2pt)--(0,1.5) circle (2pt);
\draw[fill] (0,1.5) circle (2pt)--(1,1.5) circle (2pt);
\draw[fill] (1,1.5) circle (2pt)--(2,1.5) circle (2pt);
\draw[fill] (2,1.5) circle (2pt)--(3,1.5) circle (2pt);
\node at (-1, 2) {\scalebox{0.45}{\scri{$1$}}};
\node at (0, 2) {\scalebox{0.45}{\scri{$2$}}};
\node at (1, 2) {\scalebox{0.45}{\scri{$\overline{3}$}}};
\node at (2, 2) {\scalebox{0.45}{\scri{$2$}}};
\node at (3, 2) {\scalebox{0.45}{\scri{$1$}}};
\node at (-2.5, 1) {\scalebox{0.45}{\scri{$1$}}};
\node at (-2.5, 0) {\scalebox{0.45}{\scri{$2$}}};
\node at (-2.5, -1) {\scalebox{0.45}{\scri{$3$}}};
\node at (-2.5, -2) {\scalebox{0.45}{\scri{$4$}}};
\node at (-2.5, -3) {\scalebox{0.45}{\scri{$5$}}};
\node at (-2.5, -4) {\scalebox{0.45}{\scri{$6$}}};
\node at (-2.5, -5) {\scalebox{0.45}{\scri{$7$}}};
\begin{scope}[every node/.style={minimum size=.1cm,inner sep=0mm,fill,circle}]
\draw[thick] (-2,-1) node {} -- (-1,0) node {} -- (1,-2)node {} -- (2,-1)node {} -- (4,-3)node {};
\end{scope}
\node at (1, -6) {\scri{\text{$Y_{1,2}Y_{2,3}Y^{-1}_{3,4}Y^{-1}_{4,4}$}}};
\end{tikzpicture}
}
\raisebox{0.5cm}{
\begin{tikzpicture}[scale=.45]
\draw[help lines, color=gray!60, dashed] (-2.0,-5.0) grid (4.0, 1.0);
\draw[fill] (-1,1.5) circle (2pt)--(0,1.5) circle (2pt);
\draw[fill] (0,1.5) circle (2pt)--(1,1.5) circle (2pt);
\draw[fill] (1,1.5) circle (2pt)--(2,1.5) circle (2pt);
\draw[fill] (2,1.5) circle (2pt)--(3,1.5) circle (2pt);
\node at (-1, 2) {\scalebox{0.45}{\scri{$1$}}};
\node at (0, 2) {\scalebox{0.45}{\scri{$2$}}};
\node at (1, 2) {\scalebox{0.45}{\scri{$\overline{3}$}}};
\node at (2, 2) {\scalebox{0.45}{\scri{$2$}}};
\node at (3, 2) {\scalebox{0.45}{\scri{$1$}}};
\node at (-2.5, 1) {\scalebox{0.45}{\scri{$1$}}};
\node at (-2.5, 0) {\scalebox{0.45}{\scri{$2$}}};
\node at (-2.5, -1) {\scalebox{0.45}{\scri{$3$}}};
\node at (-2.5, -2) {\scalebox{0.45}{\scri{$4$}}};
\node at (-2.5, -3) {\scalebox{0.45}{\scri{$5$}}};
\node at (-2.5, -4) {\scalebox{0.45}{\scri{$6$}}};
\node at (-2.5, -5) {\scalebox{0.45}{\scri{$7$}}};
\begin{scope}[every node/.style={minimum size=.1cm,inner sep=0mm,fill,circle}]
\draw[thick] (-2,-1) node {} -- (0,-3) node {}-- (1,-2) node {}-- (1,0) node {} -- (4,-3)node {};
\end{scope}
\node at (1, -6) {\scalebox{0.7}{\scri{\text{$Y^{-1}_{2,5}Y_{3,2}Y_{3,4}+Y^{-1}_{2,5}Y_{4,2}Y_{4,4}$}}}};
\end{tikzpicture}
} 
\raisebox{0.5cm}{
\begin{tikzpicture}[scale=0.45]
\draw[help lines, color=gray!60, dashed] (-2.0,-5.0) grid (4.0, 1.0);
\draw[fill] (-1,1.5) circle (2pt)--(0,1.5) circle (2pt);
\draw[fill] (0,1.5) circle (2pt)--(1,1.5) circle (2pt);
\draw[fill] (1,1.5) circle (2pt)--(2,1.5) circle (2pt);
\draw[fill] (2,1.5) circle (2pt)--(3,1.5) circle (2pt);
\node at (-1, 2) {\scalebox{0.45}{\scri{$1$}}};
\node at (0, 2) {\scalebox{0.45}{\scri{$2$}}};
\node at (1, 2) {\scalebox{0.45}{\scri{$\overline{3}$}}};
\node at (2, 2) {\scalebox{0.45}{\scri{$2$}}};
\node at (3, 2) {\scalebox{0.45}{\scri{$1$}}};
\node at (-2.5, 1) {\scalebox{0.45}{\scri{$1$}}};
\node at (-2.5, 0) {\scalebox{0.45}{\scri{$2$}}};
\node at (-2.5, -1) {\scalebox{0.45}{\scri{$3$}}};
\node at (-2.5, -2) {\scalebox{0.45}{\scri{$4$}}};
\node at (-2.5, -3) {\scalebox{0.45}{\scri{$5$}}};
\node at (-2.5, -4) {\scalebox{0.45}{\scri{$6$}}};
\node at (-2.5, -5) {\scalebox{0.45}{\scri{$7$}}};
\begin{scope}[every node/.style={minimum size=.1cm,inner sep=0mm,fill,circle}]
\draw[thick] (-2,-1) node {} -- (-1,-2) node {} -- (0,-1)node {} -- (1,-2)node {}-- (2,-1)node {} -- (4,-3)node {};
\end{scope}
\node at (1, -6) {\scri{\text{$Y^{-1}_{1,4}Y^{2}_{2,3}Y^{-1}_{3,4}Y^{-1}_{4,4}$}}};
\end{tikzpicture}
} \\
&
\raisebox{0.5cm}{
\begin{tikzpicture}[scale=0.45]
\draw[help lines, color=gray!60, dashed] (-2.0,-5.0) grid (4.0, 1.0);
\draw[fill] (-1,1.5) circle (2pt)--(0,1.5) circle (2pt);
\draw[fill] (0,1.5) circle (2pt)--(1,1.5) circle (2pt);
\draw[fill] (1,1.5) circle (2pt)--(2,1.5) circle (2pt);
\draw[fill] (2,1.5) circle (2pt)--(3,1.5) circle (2pt);
\node at (-1, 2) {\scalebox{0.45}{\scri{$1$}}};
\node at (0, 2) {\scalebox{0.45}{\scri{$2$}}};
\node at (1, 2) {\scalebox{0.45}{\scri{$\overline{3}$}}};
\node at (2, 2) {\scalebox{0.45}{\scri{$2$}}};
\node at (3, 2) {\scalebox{0.45}{\scri{$1$}}};
\node at (-2.5, 1) {\scalebox{0.45}{\scri{$1$}}};
\node at (-2.5, 0) {\scalebox{0.45}{\scri{$2$}}};
\node at (-2.5, -1) {\scalebox{0.45}{\scri{$3$}}};
\node at (-2.5, -2) {\scalebox{0.45}{\scri{$4$}}};
\node at (-2.5, -3) {\scalebox{0.45}{\scri{$5$}}};
\node at (-2.5, -4) {\scalebox{0.45}{\scri{$6$}}};
\node at (-2.5, -5) {\scalebox{0.45}{\scri{$7$}}};
\begin{scope}[every node/.style={minimum size=.1cm,inner sep=0mm,fill,circle}]
\draw[thick] (-2,-1) node {} -- (-1,0) node {} -- (2,-3)node {} -- (3,-2)node {} -- (4,-3)node {};
\end{scope}
\node at (1, -6) {\scri{\text{$Y_{1,2}Y_{1,4}Y^{-1}_{2,5}$}}};
\end{tikzpicture}
}
\raisebox{0.5cm}{
\begin{tikzpicture}[scale=.45]
\draw[help lines, color=gray!60, dashed] (-2.0,-5.0) grid (4.0, 1.0);
\draw[fill] (-1,1.5) circle (2pt)--(0,1.5) circle (2pt);
\draw[fill] (0,1.5) circle (2pt)--(1,1.5) circle (2pt);
\draw[fill] (1,1.5) circle (2pt)--(2,1.5) circle (2pt);
\draw[fill] (2,1.5) circle (2pt)--(3,1.5) circle (2pt);
\node at (-1, 2) {\scalebox{0.45}{\scri{$1$}}};
\node at (0, 2) {\scalebox{0.45}{\scri{$2$}}};
\node at (1, 2) {\scalebox{0.45}{\scri{$\overline{3}$}}};
\node at (2, 2) {\scalebox{0.45}{\scri{$2$}}};
\node at (3, 2) {\scalebox{0.45}{\scri{$1$}}};
\node at (-2.5, 1) {\scalebox{0.45}{\scri{$1$}}};
\node at (-2.5, 0) {\scalebox{0.45}{\scri{$2$}}};
\node at (-2.5, -1) {\scalebox{0.45}{\scri{$3$}}};
\node at (-2.5, -2) {\scalebox{0.45}{\scri{$4$}}};
\node at (-2.5, -3) {\scalebox{0.45}{\scri{$5$}}};
\node at (-2.5, -4) {\scalebox{0.45}{\scri{$6$}}};
\node at (-2.5, -5) {\scalebox{0.45}{\scri{$7$}}};
\begin{scope}[every node/.style={minimum size=.1cm,inner sep=0mm,fill,circle}]
\draw[thick] (-2,-1) node {} -- (1,-4) node {}-- (1,0) node {}-- (4,-3)node {};
\end{scope}
\node at (1, -6) {\scalebox{0.8}{\scri{\text{$Y_{3,2}Y^{-1}_{3,6}+Y_{4,2}Y^{-1}_{4,6}$}}}};
\end{tikzpicture}
}
\raisebox{0.5cm}{
\begin{tikzpicture}[scale=0.45]
\draw[help lines, color=gray!60, dashed] (-2.0,-5.0) grid (4.0, 1.0);
\draw[fill] (-1,1.5) circle (2pt)--(0,1.5) circle (2pt);
\draw[fill] (0,1.5) circle (2pt)--(1,1.5) circle (2pt);
\draw[fill] (1,1.5) circle (2pt)--(2,1.5) circle (2pt);
\draw[fill] (2,1.5) circle (2pt)--(3,1.5) circle (2pt);
\node at (-1, 2) {\scalebox{0.45}{\scri{$1$}}};
\node at (0, 2) {\scalebox{0.45}{\scri{$2$}}};
\node at (1, 2) {\scalebox{0.45}{\scri{$\overline{3}$}}};
\node at (2, 2) {\scalebox{0.45}{\scri{$2$}}};
\node at (3, 2) {\scalebox{0.45}{\scri{$1$}}};
\node at (-2.5, 1) {\scalebox{0.45}{\scri{$1$}}};
\node at (-2.5, 0) {\scalebox{0.45}{\scri{$2$}}};
\node at (-2.5, -1) {\scalebox{0.45}{\scri{$3$}}};
\node at (-2.5, -2) {\scalebox{0.45}{\scri{$4$}}};
\node at (-2.5, -3) {\scalebox{0.45}{\scri{$5$}}};
\node at (-2.5, -4) {\scalebox{0.45}{\scri{$6$}}};
\node at (-2.5, -5) {\scalebox{0.45}{\scri{$7$}}};
\begin{scope}[every node/.style={minimum size=.1cm,inner sep=0mm,fill,circle}]
\draw[thick] (-2,-1) node {} -- (0,-3) node {} -- (2,-1)node {}-- (4,-3)node {};
\end{scope}
\node at (1, -6) {\scri{\text{$Y_{2,3}Y^{-1}_{2,5}$}}};
\end{tikzpicture}
}
\raisebox{0.5cm}{
\begin{tikzpicture}[scale=0.45]
\draw[help lines, color=gray!60, dashed] (-2.0,-5.0) grid (4.0, 1.0);
\draw[fill] (-1,1.5) circle (2pt)--(0,1.5) circle (2pt);
\draw[fill] (0,1.5) circle (2pt)--(1,1.5) circle (2pt);
\draw[fill] (1,1.5) circle (2pt)--(2,1.5) circle (2pt);
\draw[fill] (2,1.5) circle (2pt)--(3,1.5) circle (2pt);
\node at (-1, 2) {\scalebox{0.45}{\scri{$1$}}};
\node at (0, 2) {\scalebox{0.45}{\scri{$2$}}};
\node at (1, 2) {\scalebox{0.45}{\scri{$\overline{3}$}}};
\node at (2, 2) {\scalebox{0.45}{\scri{$2$}}};
\node at (3, 2) {\scalebox{0.45}{\scri{$1$}}};
\node at (-2.5, 1) {\scalebox{0.45}{\scri{$1$}}};
\node at (-2.5, 0) {\scalebox{0.45}{\scri{$2$}}};
\node at (-2.5, -1) {\scalebox{0.45}{\scri{$3$}}};
\node at (-2.5, -2) {\scalebox{0.45}{\scri{$4$}}};
\node at (-2.5, -3) {\scalebox{0.45}{\scri{$5$}}};
\node at (-2.5, -4) {\scalebox{0.45}{\scri{$6$}}};
\node at (-2.5, -5) {\scalebox{0.45}{\scri{$7$}}};
\begin{scope}[every node/.style={minimum size=.1cm,inner sep=0mm,fill,circle}]
\draw[thick] (-2,-1) node {} -- (-1,-2) node {} -- (0,-1) node {}-- (2,-3) node {}-- (3,-2) node {} -- (4,-3)node {};
\end{scope}
\node at (1, -6) {\scri{\text{$Y^{-1}_{1,4}Y_{2,3}Y^{-1}_{2,5}Y_{1,4}=Y_{2,3}Y^{-1}_{2,5}$}}};
\end{tikzpicture}
} \\
&
\raisebox{0.5cm}{
\begin{tikzpicture}[scale=0.45]
\draw[help lines, color=gray!60, dashed] (-2.0,-5.0) grid (4.0, 1.0);
\draw[fill] (-1,1.5) circle (2pt)--(0,1.5) circle (2pt);
\draw[fill] (0,1.5) circle (2pt)--(1,1.5) circle (2pt);
\draw[fill] (1,1.5) circle (2pt)--(2,1.5) circle (2pt);
\draw[fill] (2,1.5) circle (2pt)--(3,1.5) circle (2pt);
\node at (-1, 2) {\scalebox{0.45}{\scri{$1$}}};
\node at (0, 2) {\scalebox{0.45}{\scri{$2$}}};
\node at (1, 2) {\scalebox{0.45}{\scri{$\overline{3}$}}};
\node at (2, 2) {\scalebox{0.45}{\scri{$2$}}};
\node at (3, 2) {\scalebox{0.45}{\scri{$1$}}};
\node at (-2.5, 1) {\scalebox{0.45}{\scri{$1$}}};
\node at (-2.5, 0) {\scalebox{0.45}{\scri{$2$}}};
\node at (-2.5, -1) {\scalebox{0.45}{\scri{$3$}}};
\node at (-2.5, -2) {\scalebox{0.45}{\scri{$4$}}};
\node at (-2.5, -3) {\scalebox{0.45}{\scri{$5$}}};
\node at (-2.5, -4) {\scalebox{0.45}{\scri{$6$}}};
\node at (-2.5, -5) {\scalebox{0.45}{\scri{$7$}}};
\begin{scope}[every node/.style={minimum size=.1cm,inner sep=0mm,fill,circle}]
\draw[thick] (-2,-1) node {} -- (-1,0) node {} -- (3,-4)node {}-- (4,-3)node {};
\end{scope}
\node at (1, -6) {\scri{\text{$Y_{1,2}Y^{-1}_{1,6}$}}};
\end{tikzpicture}
}
\raisebox{0.5cm}{
\begin{tikzpicture}[scale=.45]
\draw[help lines, color=gray!60, dashed] (-2.0,-5.0) grid (4.0, 1.0);
\draw[fill] (-1,1.5) circle (2pt)--(0,1.5) circle (2pt);
\draw[fill] (0,1.5) circle (2pt)--(1,1.5) circle (2pt);
\draw[fill] (1,1.5) circle (2pt)--(2,1.5) circle (2pt);
\draw[fill] (2,1.5) circle (2pt)--(3,1.5) circle (2pt);
\node at (-1, 2) {\scalebox{0.45}{\scri{$1$}}};
\node at (0, 2) {\scalebox{0.45}{\scri{$2$}}};
\node at (1, 2) {\scalebox{0.45}{\scri{$\overline{3}$}}};
\node at (2, 2) {\scalebox{0.45}{\scri{$2$}}};
\node at (3, 2) {\scalebox{0.45}{\scri{$1$}}};
\node at (-2.5, 1) {\scalebox{0.45}{\scri{$1$}}};
\node at (-2.5, 0) {\scalebox{0.45}{\scri{$2$}}};
\node at (-2.5, -1) {\scalebox{0.45}{\scri{$3$}}};
\node at (-2.5, -2) {\scalebox{0.45}{\scri{$4$}}};
\node at (-2.5, -3) {\scalebox{0.45}{\scri{$5$}}};
\node at (-2.5, -4) {\scalebox{0.45}{\scri{$6$}}};
\node at (-2.5, -5) {\scalebox{0.45}{\scri{$7$}}};
\begin{scope}[every node/.style={minimum size=.1cm,inner sep=0mm,fill,circle}]
\draw[thick] (-2,-1) node {} -- (1,-4) node {}-- (1,-2) node {}-- (2,-1) node {}-- (4,-3)node {};
\end{scope}
\node at (1, -6) {\scalebox{0.8}{\scri{\text{$Y_{2,3}Y^{-1}_{3,4}Y^{-1}_{3,6}+Y_{2,3}Y^{-1}_{4,4}Y^{-1}_{4,6}$}}}};
\end{tikzpicture}
}
\raisebox{0.5cm}{
\begin{tikzpicture}[scale=0.45]
\draw[help lines, color=gray!60, dashed] (-2.0,-5.0) grid (4.0, 1.0);
\draw[fill] (-1,1.5) circle (2pt)--(0,1.5) circle (2pt);
\draw[fill] (0,1.5) circle (2pt)--(1,1.5) circle (2pt);
\draw[fill] (1,1.5) circle (2pt)--(2,1.5) circle (2pt);
\draw[fill] (2,1.5) circle (2pt)--(3,1.5) circle (2pt);
\node at (-1, 2) {\scalebox{0.45}{\scri{$1$}}};
\node at (0, 2) {\scalebox{0.45}{\scri{$2$}}};
\node at (1, 2) {\scalebox{0.45}{\scri{$\overline{3}$}}};
\node at (2, 2) {\scalebox{0.45}{\scri{$2$}}};
\node at (3, 2) {\scalebox{0.45}{\scri{$1$}}};
\node at (-2.5, 1) {\scalebox{0.45}{\scri{$1$}}};
\node at (-2.5, 0) {\scalebox{0.45}{\scri{$2$}}};
\node at (-2.5, -1) {\scalebox{0.45}{\scri{$3$}}};
\node at (-2.5, -2) {\scalebox{0.45}{\scri{$4$}}};
\node at (-2.5, -3) {\scalebox{0.45}{\scri{$5$}}};
\node at (-2.5, -4) {\scalebox{0.45}{\scri{$6$}}};
\node at (-2.5, -5) {\scalebox{0.45}{\scri{$7$}}};
\begin{scope}[every node/.style={minimum size=.1cm,inner sep=0mm,fill,circle}]
\draw[thick] (-2,-1) node {} -- (0,-3) node {} -- (1,-2) node {}-- (2,-3) node {}-- (3,-2) node {} -- (4,-3)node {};
\end{scope}
\node at (1, -6) {\scri{\text{$Y_{1,4}Y^{-2}_{2,5}Y_{3,4}Y_{4,4}$}}};
\end{tikzpicture}
}
\raisebox{0.5cm}{
\begin{tikzpicture}[scale=0.45]
\draw[help lines, color=gray!60, dashed] (-2.0,-5.0) grid (4.0, 1.0);
\draw[fill] (-1,1.5) circle (2pt)--(0,1.5) circle (2pt);
\draw[fill] (0,1.5) circle (2pt)--(1,1.5) circle (2pt);
\draw[fill] (1,1.5) circle (2pt)--(2,1.5) circle (2pt);
\draw[fill] (2,1.5) circle (2pt)--(3,1.5) circle (2pt);
\node at (-1, 2) {\scalebox{0.45}{\scri{$1$}}};
\node at (0, 2) {\scalebox{0.45}{\scri{$2$}}};
\node at (1, 2) {\scalebox{0.45}{\scri{$\overline{3}$}}};
\node at (2, 2) {\scalebox{0.45}{\scri{$2$}}};
\node at (3, 2) {\scalebox{0.45}{\scri{$1$}}};
\node at (-2.5, 1) {\scalebox{0.45}{\scri{$1$}}};
\node at (-2.5, 0) {\scalebox{0.45}{\scri{$2$}}};
\node at (-2.5, -1) {\scalebox{0.45}{\scri{$3$}}};
\node at (-2.5, -2) {\scalebox{0.45}{\scri{$4$}}};
\node at (-2.5, -3) {\scalebox{0.45}{\scri{$5$}}};
\node at (-2.5, -4) {\scalebox{0.45}{\scri{$6$}}};
\node at (-2.5, -5) {\scalebox{0.45}{\scri{$7$}}};
\begin{scope}[every node/.style={minimum size=.1cm,inner sep=0mm,fill,circle}]
\draw[thick] (-2,-1) node {} -- (-1,-2) node {} -- (0,-1)node {}-- (3,-4) node {}-- (4,-3)node {};
\end{scope}
\node at (1, -6) {\scri{\text{$Y^{-1}_{1,4}Y^{-1}_{1,6}Y_{2,3}$}}};
\end{tikzpicture}
} 
\end{align*}
\caption{Monomials or binomials associated to paths in $\mathscr{P}_{2,1}$, part I.} \label{q-character of 2_1 1}
\end{figure}

\begin{figure}
\begin{align*}
&
\raisebox{0.5cm}{
\begin{tikzpicture}[scale=.45]
\draw[help lines, color=gray!60, dashed] (-2.0,-5.0) grid (4.0, 1.0);
\draw[fill] (-1,1.5) circle (2pt)--(0,1.5) circle (2pt);
\draw[fill] (0,1.5) circle (2pt)--(1,1.5) circle (2pt);
\draw[fill] (1,1.5) circle (2pt)--(2,1.5) circle (2pt);
\draw[fill] (2,1.5) circle (2pt)--(3,1.5) circle (2pt);
\node at (-1, 2) {\scalebox{0.45}{\scri{$1$}}};
\node at (0, 2) {\scalebox{0.45}{\scri{$2$}}};
\node at (1, 2) {\scalebox{0.45}{\scri{$\overline{3}$}}};
\node at (2, 2) {\scalebox{0.45}{\scri{$2$}}};
\node at (3, 2) {\scalebox{0.45}{\scri{$1$}}};
\node at (-2.5, 1) {\scalebox{0.45}{\scri{$1$}}};
\node at (-2.5, 0) {\scalebox{0.45}{\scri{$2$}}};
\node at (-2.5, -1) {\scalebox{0.45}{\scri{$3$}}};
\node at (-2.5, -2) {\scalebox{0.45}{\scri{$4$}}};
\node at (-2.5, -3) {\scalebox{0.45}{\scri{$5$}}};
\node at (-2.5, -4) {\scalebox{0.45}{\scri{$6$}}};
\node at (-2.5, -5) {\scalebox{0.45}{\scri{$7$}}};
\begin{scope}[every node/.style={minimum size=.1cm,inner sep=0mm,fill,circle}]
\draw[thick] (-2,-1) node {} -- (1,-4) node {}-- (1,-2) node {}-- (2,-3) node {}-- (3,-2) node {} -- (4,-3)node {};
\end{scope}
\node at (1, -6) {\scalebox{0.7}{\scri{\text{$Y_{1,4}Y^{-1}_{2,5}Y^{-1}_{3,6}Y_{4,4}+Y_{1,4}Y^{-1}_{2,5}Y_{3,4}Y^{-1}_{4,6}$}}}};
\end{tikzpicture}
}
\raisebox{0.5cm}{
\begin{tikzpicture}[scale=0.45]
\draw[help lines, color=gray!60, dashed] (-2.0,-5.0) grid (4.0, 1.0);
\draw[fill] (-1,1.5) circle (2pt)--(0,1.5) circle (2pt);
\draw[fill] (0,1.5) circle (2pt)--(1,1.5) circle (2pt);
\draw[fill] (1,1.5) circle (2pt)--(2,1.5) circle (2pt);
\draw[fill] (2,1.5) circle (2pt)--(3,1.5) circle (2pt);
\node at (-1, 2) {\scalebox{0.45}{\scri{$1$}}};
\node at (0, 2) {\scalebox{0.45}{\scri{$2$}}};
\node at (1, 2) {\scalebox{0.45}{\scri{$\overline{3}$}}};
\node at (2, 2) {\scalebox{0.45}{\scri{$2$}}};
\node at (3, 2) {\scalebox{0.45}{\scri{$1$}}};
\node at (-2.5, 1) {\scalebox{0.45}{\scri{$1$}}};
\node at (-2.5, 0) {\scalebox{0.45}{\scri{$2$}}};
\node at (-2.5, -1) {\scalebox{0.45}{\scri{$3$}}};
\node at (-2.5, -2) {\scalebox{0.45}{\scri{$4$}}};
\node at (-2.5, -3) {\scalebox{0.45}{\scri{$5$}}};
\node at (-2.5, -4) {\scalebox{0.45}{\scri{$6$}}};
\node at (-2.5, -5) {\scalebox{0.45}{\scri{$7$}}};
\begin{scope}[every node/.style={minimum size=.1cm,inner sep=0mm,fill,circle}]
\draw[thick] (-2,-1) node {} -- (0,-3) node {} -- (1,-2) node {}-- (3,-4) node {} -- (4,-3)node {};
\end{scope}
\node at (1, -6) {\scri{\text{$Y^{-1}_{1,6}Y^{-1}_{2,5}Y_{3,4}Y_{4,4}$}}};
\end{tikzpicture}
}
\raisebox{0.5cm}{
\begin{tikzpicture}[scale=0.45]
\draw[help lines, color=gray!60, dashed] (-2.0,-5.0) grid (4.0, 1.0);
\draw[fill] (-1,1.5) circle (2pt)--(0,1.5) circle (2pt);
\draw[fill] (0,1.5) circle (2pt)--(1,1.5) circle (2pt);
\draw[fill] (1,1.5) circle (2pt)--(2,1.5) circle (2pt);
\draw[fill] (2,1.5) circle (2pt)--(3,1.5) circle (2pt);
\node at (-1, 2) {\scalebox{0.45}{\scri{$1$}}};
\node at (0, 2) {\scalebox{0.45}{\scri{$2$}}};
\node at (1, 2) {\scalebox{0.45}{\scri{$\overline{3}$}}};
\node at (2, 2) {\scalebox{0.45}{\scri{$2$}}};
\node at (3, 2) {\scalebox{0.45}{\scri{$1$}}};
\node at (-2.5, 1) {\scalebox{0.45}{\scri{$1$}}};
\node at (-2.5, 0) {\scalebox{0.45}{\scri{$2$}}};
\node at (-2.5, -1) {\scalebox{0.45}{\scri{$3$}}};
\node at (-2.5, -2) {\scalebox{0.45}{\scri{$4$}}};
\node at (-2.5, -3) {\scalebox{0.45}{\scri{$5$}}};
\node at (-2.5, -4) {\scalebox{0.45}{\scri{$6$}}};
\node at (-2.5, -5) {\scalebox{0.45}{\scri{$7$}}};
\begin{scope}[every node/.style={minimum size=.1cm,inner sep=0mm,fill,circle}]
\draw[thick] (-2,-1) node {} -- (1,-4) node {} -- (3,-2) node {} -- (4,-3)node {};
\end{scope}
\node at (1, -6) {\scri{\text{$Y_{1,4}Y^{-1}_{3,6}Y^{-1}_{4,6}$}}};
\end{tikzpicture}
}
\raisebox{0.5cm}{
\begin{tikzpicture}[scale=.45]
\draw[help lines, color=gray!60, dashed] (-2.0,-5.0) grid (4.0, 1.0);
\draw[fill] (-1,1.5) circle (2pt)--(0,1.5) circle (2pt);
\draw[fill] (0,1.5) circle (2pt)--(1,1.5) circle (2pt);
\draw[fill] (1,1.5) circle (2pt)--(2,1.5) circle (2pt);
\draw[fill] (2,1.5) circle (2pt)--(3,1.5) circle (2pt);
\node at (-1, 2) {\scalebox{0.45}{\scri{$1$}}};
\node at (0, 2) {\scalebox{0.45}{\scri{$2$}}};
\node at (1, 2) {\scalebox{0.45}{\scri{$\overline{3}$}}};
\node at (2, 2) {\scalebox{0.45}{\scri{$2$}}};
\node at (3, 2) {\scalebox{0.45}{\scri{$1$}}};
\node at (-2.5, 1) {\scalebox{0.45}{\scri{$1$}}};
\node at (-2.5, 0) {\scalebox{0.45}{\scri{$2$}}};
\node at (-2.5, -1) {\scalebox{0.45}{\scri{$3$}}};
\node at (-2.5, -2) {\scalebox{0.45}{\scri{$4$}}};
\node at (-2.5, -3) {\scalebox{0.45}{\scri{$5$}}};
\node at (-2.5, -4) {\scalebox{0.45}{\scri{$6$}}};
\node at (-2.5, -5) {\scalebox{0.45}{\scri{$7$}}};
\begin{scope}[every node/.style={minimum size=.1cm,inner sep=0mm,fill,circle}]
\draw[thick] (-2,-1) node {} -- (1,-4) node {}-- (1,-2) node {}-- (3,-4) node {} -- (4,-3)node {};
\end{scope}
\node at (1, -6) {\scalebox{0.8}{\scri{\text{$Y^{-1}_{1,6}Y^{-1}_{3,6}Y_{4,4}+Y^{-1}_{1,6}Y_{3,4}Y^{-1}_{4,6}$}}}};
\end{tikzpicture}
} \\
& 
\raisebox{0.5cm}{
\begin{tikzpicture}[scale=0.45]
\draw[help lines, color=gray!60, dashed] (-2.0,-5.0) grid (4.0, 1.0);
\draw[fill] (-1,1.5) circle (2pt)--(0,1.5) circle (2pt);
\draw[fill] (0,1.5) circle (2pt)--(1,1.5) circle (2pt);
\draw[fill] (1,1.5) circle (2pt)--(2,1.5) circle (2pt);
\draw[fill] (2,1.5) circle (2pt)--(3,1.5) circle (2pt);
\node at (-1, 2) {\scalebox{0.45}{\scri{$1$}}};
\node at (0, 2) {\scalebox{0.45}{\scri{$2$}}};
\node at (1, 2) {\scalebox{0.45}{\scri{$\overline{3}$}}};
\node at (2, 2) {\scalebox{0.45}{\scri{$2$}}};
\node at (3, 2) {\scalebox{0.45}{\scri{$1$}}};
\node at (-2.5, 1) {\scalebox{0.45}{\scri{$1$}}};
\node at (-2.5, 0) {\scalebox{0.45}{\scri{$2$}}};
\node at (-2.5, -1) {\scalebox{0.45}{\scri{$3$}}};
\node at (-2.5, -2) {\scalebox{0.45}{\scri{$4$}}};
\node at (-2.5, -3) {\scalebox{0.45}{\scri{$5$}}};
\node at (-2.5, -4) {\scalebox{0.45}{\scri{$6$}}};
\node at (-2.5, -5) {\scalebox{0.45}{\scri{$7$}}};
\begin{scope}[every node/.style={minimum size=.1cm,inner sep=0mm,fill,circle}]
\draw[thick] (-2,-1) node {} -- (1,-4) node {} -- (2,-3) node {}-- (3,-4) node {} -- (4,-3)node {};
\end{scope}
\node at (1, -6) {\scri{\text{$Y^{-1}_{1,6}Y_{2,5}Y^{-1}_{3,6}Y^{-1}_{4,6}$}}};
\end{tikzpicture}
}
\raisebox{0.5cm}{
\begin{tikzpicture}[scale=0.45]
\draw[help lines, color=gray!60, dashed] (-2.0,-5.0) grid (4.0, 1.0);
\draw[fill] (-1,1.5) circle (2pt)--(0,1.5) circle (2pt);
\draw[fill] (0,1.5) circle (2pt)--(1,1.5) circle (2pt);
\draw[fill] (1,1.5) circle (2pt)--(2,1.5) circle (2pt);
\draw[fill] (2,1.5) circle (2pt)--(3,1.5) circle (2pt);
\node at (-1, 2) {\scalebox{0.45}{\scri{$1$}}};
\node at (0, 2) {\scalebox{0.45}{\scri{$2$}}};
\node at (1, 2) {\scalebox{0.45}{\scri{$\overline{3}$}}};
\node at (2, 2) {\scalebox{0.45}{\scri{$2$}}};
\node at (3, 2) {\scalebox{0.45}{\scri{$1$}}};
\node at (-2.5, 1) {\scalebox{0.45}{\scri{$1$}}};
\node at (-2.5, 0) {\scalebox{0.45}{\scri{$2$}}};
\node at (-2.5, -1) {\scalebox{0.45}{\scri{$3$}}};
\node at (-2.5, -2) {\scalebox{0.45}{\scri{$4$}}};
\node at (-2.5, -3) {\scalebox{0.45}{\scri{$5$}}};
\node at (-2.5, -4) {\scalebox{0.45}{\scri{$6$}}};
\node at (-2.5, -5) {\scalebox{0.45}{\scri{$7$}}};
\begin{scope}[every node/.style={minimum size=.1cm,inner sep=0mm,fill,circle}]
\draw[thick] (-2,-1) node {} -- (2,-5) node {} -- (4,-3)node {};
\end{scope}
\node at (1, -6) {\scri{\text{$Y^{-1}_{2,7}$}}};
\end{tikzpicture}
}
\\
\end{align*}
\caption{Monomials or binomials associated to paths in $\mathscr{P}_{2,1}$, part II.} \label{q-character of 2_1 2}
\end{figure}

\begin{figure}
\begin{align*}
&
\raisebox{0.5cm}{
\begin{tikzpicture}[scale=.5]
\draw[help lines, color=gray!60, dashed] (-2.0,-5.0) grid (1.0, 1.0);
\draw[fill] (-1,1.5) circle (2pt)--(0,1.5) circle (2pt);
\draw[fill] (0,1.5) circle (2pt)--(1,1.5) circle (2pt);
\node at (-1, 2) {\scalebox{0.45}{\scri{$1$}}};
\node at (0, 2) {\scalebox{0.45}{\scri{$2$}}};
\node at (1, 2) {\scalebox{0.45}{\scri{$\overline{3}$}}};
\node at (-2.5, 1) {\scalebox{0.45}{\scri{$0$}}};
\node at (-2.5, 0) {\scalebox{0.45}{\scri{$1$}}};
\node at (-2.5, -1) {\scalebox{0.45}{\scri{$2$}}};
\node at (-2.5, -2) {\scalebox{0.45}{\scri{$3$}}};
\node at (-2.5, -3) {\scalebox{0.45}{\scri{$4$}}};
\node at (-2.5, -4) {\scalebox{0.45}{\scri{$5$}}};
\node at (-2.5, -5) {\scalebox{0.45}{\scri{$6$}}};
\begin{scope}[every node/.style={minimum size=.1cm,inner sep=0mm,fill,circle}]
\draw[thick] (-2,-2) node {} -- (1,1) node {};
\end{scope}
\node at (-0.5, -6) {\scri{\text{$Y_{3,0}$}}};
\end{tikzpicture}
}
\raisebox{0.5cm}{
\begin{tikzpicture}[scale=.5]
\draw[help lines, color=gray!60, dashed] (-2.0,-5.0) grid (1.0, 1.0);
\draw[fill] (-1,1.5) circle (2pt)--(0,1.5) circle (2pt);
\draw[fill] (0,1.5) circle (2pt)--(1,1.5) circle (2pt);
\node at (-1, 2) {\scalebox{0.45}{\scri{$1$}}};
\node at (0, 2) {\scalebox{0.45}{\scri{$2$}}};
\node at (1, 2) {\scalebox{0.45}{\scri{$\overline{3}$}}};
\node at (-2.5, 1) {\scalebox{0.45}{\scri{$0$}}};
\node at (-2.5, 0) {\scalebox{0.45}{\scri{$1$}}};
\node at (-2.5, -1) {\scalebox{0.45}{\scri{$2$}}};
\node at (-2.5, -2) {\scalebox{0.45}{\scri{$3$}}};
\node at (-2.5, -3) {\scalebox{0.45}{\scri{$4$}}};
\node at (-2.5, -4) {\scalebox{0.45}{\scri{$5$}}};
\node at (-2.5, -5) {\scalebox{0.45}{\scri{$6$}}};
\begin{scope}[every node/.style={minimum size=.1cm,inner sep=0mm,fill,circle}]
\draw[thick] (-2,-2) node {} -- (0,0) node {}-- (1,-1) node {};
\end{scope}
\node at (-0.5, -6) {\scri{\text{$Y_{2,1}Y^{-1}_{3,2}$}}};
\end{tikzpicture}
}
\raisebox{0.5cm}{
\begin{tikzpicture}[scale=.5]
\draw[help lines, color=gray!60, dashed] (-2.0,-5.0) grid (1.0, 1.0);
\draw[fill] (-1,1.5) circle (2pt)--(0,1.5) circle (2pt);
\draw[fill] (0,1.5) circle (2pt)--(1,1.5) circle (2pt);
\node at (-1, 2) {\scalebox{0.45}{\scri{$1$}}};
\node at (0, 2) {\scalebox{0.45}{\scri{$2$}}};
\node at (1, 2) {\scalebox{0.45}{\scri{$\overline{3}$}}};
\node at (-2.5, 1) {\scalebox{0.45}{\scri{$0$}}};
\node at (-2.5, 0) {\scalebox{0.45}{\scri{$1$}}};
\node at (-2.5, -1) {\scalebox{0.45}{\scri{$2$}}};
\node at (-2.5, -2) {\scalebox{0.45}{\scri{$3$}}};
\node at (-2.5, -3) {\scalebox{0.45}{\scri{$4$}}};
\node at (-2.5, -4) {\scalebox{0.45}{\scri{$5$}}};
\node at (-2.5, -5) {\scalebox{0.45}{\scri{$6$}}};
\begin{scope}[every node/.style={minimum size=.1cm,inner sep=0mm,fill,circle}]
\draw[thick] (-2,-2) node {} -- (-1,-1) node {}-- (0,-2) node {}-- (1,-1) node {};
\end{scope}
\node at (-0.5, -6) {\scri{\text{$Y_{1,2}Y^{-1}_{2,3}Y_{4,2}$}}};
\end{tikzpicture}
}
\raisebox{0.5cm}{
\begin{tikzpicture}[scale=.5]
\draw[help lines, color=gray!60, dashed] (-2.0,-5.0) grid (1.0, 1.0);
\draw[fill] (-1,1.5) circle (2pt)--(0,1.5) circle (2pt);
\draw[fill] (0,1.5) circle (2pt)--(1,1.5) circle (2pt);
\node at (-1, 2) {\scalebox{0.45}{\scri{$1$}}};
\node at (0, 2) {\scalebox{0.45}{\scri{$2$}}};
\node at (1, 2) {\scalebox{0.45}{\scri{$\overline{3}$}}};
\node at (-2.5, 1) {\scalebox{0.45}{\scri{$0$}}};
\node at (-2.5, 0) {\scalebox{0.45}{\scri{$1$}}};
\node at (-2.5, -1) {\scalebox{0.45}{\scri{$2$}}};
\node at (-2.5, -2) {\scalebox{0.45}{\scri{$3$}}};
\node at (-2.5, -3) {\scalebox{0.45}{\scri{$4$}}};
\node at (-2.5, -4) {\scalebox{0.45}{\scri{$5$}}};
\node at (-2.5, -5) {\scalebox{0.45}{\scri{$6$}}};
\begin{scope}[every node/.style={minimum size=.1cm,inner sep=0mm,fill,circle}]
\draw[thick] (-2,-2) node {} -- (-1,-1) node {}-- (1,-3) node {};
\end{scope}
\node at (-0.5, -6) {\scri{\text{$Y_{1,2}Y^{-1}_{4,4}$}}};
\end{tikzpicture}
}
\\
&
\raisebox{0.5cm}{
\begin{tikzpicture}[scale=.5]
\draw[help lines, color=gray!60, dashed] (-2.0,-5.0) grid (1.0, 1.0);
\draw[fill] (-1,1.5) circle (2pt)--(0,1.5) circle (2pt);
\draw[fill] (0,1.5) circle (2pt)--(1,1.5) circle (2pt);
\node at (-1, 2) {\scalebox{0.45}{\scri{$1$}}};
\node at (0, 2) {\scalebox{0.45}{\scri{$2$}}};
\node at (1, 2) {\scalebox{0.45}{\scri{$\overline{3}$}}};
\node at (-2.5, 1) {\scalebox{0.45}{\scri{$0$}}};
\node at (-2.5, 0) {\scalebox{0.45}{\scri{$1$}}};
\node at (-2.5, -1) {\scalebox{0.45}{\scri{$2$}}};
\node at (-2.5, -2) {\scalebox{0.45}{\scri{$3$}}};
\node at (-2.5, -3) {\scalebox{0.45}{\scri{$4$}}};
\node at (-2.5, -4) {\scalebox{0.45}{\scri{$5$}}};
\node at (-2.5, -5) {\scalebox{0.45}{\scri{$6$}}};
\begin{scope}[every node/.style={minimum size=.1cm,inner sep=0mm,fill,circle}]
\draw[thick] (-2,-2) node {} -- (-1,-3) node {}-- (1,-1) node {};
\end{scope}
\node at (-0.5, -6) {\scri{\text{$Y^{-1}_{1,4}Y_{4,2}$}}};
\end{tikzpicture}
}
\raisebox{0.5cm}{
\begin{tikzpicture}[scale=.5]
\draw[help lines, color=gray!60, dashed] (-2.0,-5.0) grid (1.0, 1.0);
\draw[fill] (-1,1.5) circle (2pt)--(0,1.5) circle (2pt);
\draw[fill] (0,1.5) circle (2pt)--(1,1.5) circle (2pt);
\node at (-1, 2) {\scalebox{0.45}{\scri{$1$}}};
\node at (0, 2) {\scalebox{0.45}{\scri{$2$}}};
\node at (1, 2) {\scalebox{0.45}{\scri{$\overline{3}$}}};
\node at (-2.5, 1) {\scalebox{0.45}{\scri{$0$}}};
\node at (-2.5, 0) {\scalebox{0.45}{\scri{$1$}}};
\node at (-2.5, -1) {\scalebox{0.45}{\scri{$2$}}};
\node at (-2.5, -2) {\scalebox{0.45}{\scri{$3$}}};
\node at (-2.5, -3) {\scalebox{0.45}{\scri{$4$}}};
\node at (-2.5, -4) {\scalebox{0.45}{\scri{$5$}}};
\node at (-2.5, -5) {\scalebox{0.45}{\scri{$6$}}};
\begin{scope}[every node/.style={minimum size=.1cm,inner sep=0mm,fill,circle}]
\draw[thick] (-2,-2) node {} -- (-1,-3) node {}-- (0,-2) node {}-- (1,-3) node {};
\end{scope}
\node at (-0.5, -6) {\scri{\text{$Y^{-1}_{1,4}Y_{2,3}Y^{-1}_{4,4}$}}};
\end{tikzpicture}
}
\raisebox{0.5cm}{
\begin{tikzpicture}[scale=.5]
\draw[help lines, color=gray!60, dashed] (-2.0,-5.0) grid (1.0, 1.0);
\draw[fill] (-1,1.5) circle (2pt)--(0,1.5) circle (2pt);
\draw[fill] (0,1.5) circle (2pt)--(1,1.5) circle (2pt);
\node at (-1, 2) {\scalebox{0.45}{\scri{$1$}}};
\node at (0, 2) {\scalebox{0.45}{\scri{$2$}}};
\node at (1, 2) {\scalebox{0.45}{\scri{$\overline{3}$}}};
\node at (-2.5, 1) {\scalebox{0.45}{\scri{$0$}}};
\node at (-2.5, 0) {\scalebox{0.45}{\scri{$1$}}};
\node at (-2.5, -1) {\scalebox{0.45}{\scri{$2$}}};
\node at (-2.5, -2) {\scalebox{0.45}{\scri{$3$}}};
\node at (-2.5, -3) {\scalebox{0.45}{\scri{$4$}}};
\node at (-2.5, -4) {\scalebox{0.45}{\scri{$5$}}};
\node at (-2.5, -5) {\scalebox{0.45}{\scri{$6$}}};
\begin{scope}[every node/.style={minimum size=.1cm,inner sep=0mm,fill,circle}]
\draw[thick] (-2,-2) node {} -- (0,-4) node {}-- (1,-3)node {};
\end{scope}
\node at (-0.5, -6) {\scri{\text{$Y^{-1}_{2,5}Y_{3,4}$}}};
\end{tikzpicture}
}
\raisebox{0.5cm}{
\begin{tikzpicture}[scale=.5]
\draw[help lines, color=gray!60, dashed] (-2.0,-5.0) grid (1.0, 1.0);
\draw[fill] (-1,1.5) circle (2pt)--(0,1.5) circle (2pt);
\draw[fill] (0,1.5) circle (2pt)--(1,1.5) circle (2pt);
\node at (-1, 2) {\scalebox{0.45}{\scri{$1$}}};
\node at (0, 2) {\scalebox{0.45}{\scri{$2$}}};
\node at (1, 2) {\scalebox{0.45}{\scri{$\overline{3}$}}};
\node at (-2.5, 1) {\scalebox{0.45}{\scri{$0$}}};
\node at (-2.5, 0) {\scalebox{0.45}{\scri{$1$}}};
\node at (-2.5, -1) {\scalebox{0.45}{\scri{$2$}}};
\node at (-2.5, -2) {\scalebox{0.45}{\scri{$3$}}};
\node at (-2.5, -3) {\scalebox{0.45}{\scri{$4$}}};
\node at (-2.5, -4) {\scalebox{0.45}{\scri{$5$}}};
\node at (-2.5, -5) {\scalebox{0.45}{\scri{$6$}}};
\begin{scope}[every node/.style={minimum size=.1cm,inner sep=0mm,fill,circle}]
\draw[thick] (-2,-2) node {} -- (1,-5) node {};
\end{scope}
\node at (-0.5, -6) {\scri{\text{$Y^{-1}_{3,6}$}}};
\end{tikzpicture}
}
\end{align*}
\caption{Monomials or binomials associated to paths in $\mathscr{P}_{3,0}$.} \label{q-character of 3_0}
\end{figure}
\end{example}

\section*{Acknowledgement}

The work was supported by the National Natural Science Foundation of China (No. 12171213, 12001254) and Gansu Province Science Foundation for Youths (No. 22JR5RA534). 

\begin{bibdiv}
\begin{biblist}

\bib{Beck94}{article}{
author={Beck, J.},
title={Braid group action and quantum affine algebras},
journal={Comm. Math. Phys.},
date={1994},
volume={165},
number={3},
pages={555--568}}

\bib{Bit21a}{article}{
author={Bittmann, L.},
title={Quantum Grothendieck rings as quantum cluster algebras},
journal={J. Lond. Math. Soc. (2)},
date={2021},
volume={103},
number={1},
pages={161--197}}

\bib{Bit21b}{article}{
author={Bittmann, L.},
title={A quantum cluster algebra approach to representations of simply laced quantum affine algebras},
journal={Math. Z.},
date={2021},
volume={298},
number={3-4},
pages={1449--1485}}

\bib{Bou02}{book}{
author={Bourbaki, N.},
title={Lie groups and Lie algebras. Chapters 4--6},
series={Translated from the 1968 French original by Andrew Pressley. Elements of Mathematics (Berlin)},
publisher={Springer-Verlag},
address={Berlin},
year={2002}}

\bib{BC19}{article}{
author={Brito, M.},author={Chari, V.},
title={Tensor products and $q$-characters of HL-modules and monoidal categorifications},
journal={J. $\acute{E}$c. polytech. Math.},
date={2019},
volume={6},
pages={581--619}}

\bib{BM17}{article}{
author={Brito, M.},author={Mukhin, E.},
title={Representations of quantum affine algebras of type $B_{N}$},
journal={Trans. Amer. Math. Soc.},
date={2017},
volume={369},
number={4},
pages={2775--2806}}

\bib{BR90}{article}{
author={Bazhanov, V. V.},author={Reshetikhin, N.},
title={Restricted solid-on-solid models connected with simply laced algebras and conformal field theory},
journal={J. Phys. A.},
date={1990},
volume={23},
number={9},
pages={1477--1492}}

\bib{Car05}{book}{
author= {Carter, R. W.},
title={Lie algebras of finite and affine type},
series={Cambridge Studies in Advanced Mathematics},
volume={96},
publisher={Cambridge University Press},
address={Cambridge},
year={2005}}

\bib{C95}{article}{
author={Chari, V.},
title={Minimal affinizations of representations of quantum groups: the rank $2$ case}, 
journal={Publ. Res. Inst. Math. Sci.},
date={1995},
volume={31},
number={5},
pages={873--911}}

\bib{CM06}{article}{
author={Chari, V.},author={Moura, A.},
title={Characters of fundamental representations of quantum affine algebras}, 
journal={Acta Appl. Math.},
date={2006},
volume={90},
number={1--2},
pages={43--63}}

\bib{CMY13}{article}{
author={Chari, V.},author={Moura, A.},author={Young, C. A.},
title={Prime representations from a homological perspective},
journal={Math. Z.},
date={2013},
volume={274},
number={1--2},
pages={613--645}}

\bib{CP91}{article}{
author={Chari, V.},author={Pressley, A.},
title={Quantum affine algebras},
journal={Comm. Math. Phys.},
date={1991},
volume={142},
number={2},
pages={261--283}}

\bib{CP94}{book}{
author={Chari, V.},author= {Pressley, A.},
title={A guide to quantum groups},
publisher={Cambridge University Press},
address={Cambridge},
year={1994}}

\bib{CP95a}{article}{
author={Chari, V.},author= {Pressley, A.},
title={Quantum affine algebras and their representations},
conference={
title={Representations of groups},
address={Banff, AB},
date={1994}},
book={
series={CMS Conf. Proc.},
volume={16},
publisher={Amer. Math. Soc.},
address={Providence, RI},
date={1995}},
pages={59--78}}

\bib{CP97}{article}{
author={Chari, V.},author= {Pressley, A.},
title={Factorization of representations of quantum affine algebras},
conference={
title={Modular interfaces},
address={Riverside, CA},
date={1995}},
book={series={AMS/IP Stud. Adv. Math.},
volume={4},
publisher={Amer. Math. Soc.},
address={Providence, RI},
date={1997}},
pages={33--40}}

\bib{Dri88}{article}{
author={Drinfeld, V. G.},
title={A new realization of Yangians and of quantum affine algebras},
journal={Dokl. Akad. Nauk SSSR},
date={1987},
volume={296},
number={1},
pages={13--17; translation in Soviet Math. Dokl. \textbf{36} (1988), no. 2, 212--216}}

\bib{DLL19}{article}{
author={Duan, B.},author={Li, J. R.},author={Luo, Y. F.},
title={Cluster algebras and snake modules},
journal={J. Algebra},
date={2019},
volume={519},
pages={325--377}}

\bib{DS20}{article}{
author={Duan, B.},author={Schiffler, R.},
title={A geometric $q$-character formula for snake modules},
journal={J. Lond. Math. Soc. (2)},
date={2020},
volume={102},
number={2},
pages={846--878}}

\bib{FM01}{article}{
author={Frenkel, E.},author={Mukhin, E.},
title={Combinatorics of $q$-characters of finite-dimensional representations of quantum affine algebras},
journal={Comm. Math. Phys.},
date={2001},
volume={216},
number={1},
pages={23--57}}

\bib{FR98}{article}{
author={Frenkel, E.},author={Reshetikhin, N.},
title={The $q$-characters of representations of quantum affine algebras and deformations of $W$-algebras},
conference={title={Recent developments in quantum affine algebras and related topics},
address={Raleigh, NC},
date={1998}},
book={series={Contemp. Math.},
volume={248},
publisher={Amer. Math. Soc.},
address={Providence, RI},
date={1999}},
pages={163--205}}

\bib{GDL22}{article}{
author={Guo, J. M.}, author={Duan, B.}, author={Luo,Y. F.},
title={Combinatorics of the $q$-characters of Hernandez-Leclerc modules},
journal={J. Algebra.},
date={2022},
volume={605},
pages={253--295}}

\bib{Her04}{article}{
author={Hernandez, D.},
title={Algebraic approach to $q,t$-characters},
journal={Adv. Math.},
date={2004},
volume={187},
number={1},
pages={1--52}}

\bib{H05}{article}{
author={Hernandez, D.},
title={Monomials of $q$ and $q, t$-characters for non simply-laced quantum affinizations},
journal={Math. Z.},
date={2005},
volume={250},
number={2},
pages={443--473}}

\bib{H07}{article}{
author={Hernandez, D.},
title={On minimal affinizations of representations of quantum groups},
journal={Comm. Math. Phys.},
date={2007},
volume={276},
number={1},
pages={221--259}}

\bib{HL10}{article}{
author={Hernandez, D.},author={Leclerc, B.},
title={Cluster algebras and quantum affine algebras},
journal={Duke Math. J.},
date={2010},
volume={154},
number={2},
pages={265--341}}

\bib{HL16}{article}{
author={Hernandez, D.},author={Leclerc, B.},
title={A cluster algebra approach to $q$-characters of Kirillov-Reshetikhin modules},
journal={J. Eur. Math. Soc.},
date={2016},
volume={18},
number={5},
pages={1113--1159}}

\bib{Hum78}{book}{
author={Humphreys, J. E.},
title={Introduction to Lie algebras and representation theory},
series={Second printing, revised. Graduate Texts in Mathematics, 9},
publisher={Springer-Verlag},
address={New York-Berlin},
year={1978}
pages={xii+171 pp}
}

\bib{J22}{article}{
author={Jang, I. S.},
title={Path description for $q$-characters of fundamental modules in type $C$},
journal={arXiv:2211.03303v2},
date={2022},
pages={1--15}}

\bib{KKKO18}{article}{
author={Kang, S. J.},author={Kashiwara, M.},author={Kim, M.},author={Oh, S. J.},
title={Monoidal categorification of cluster algebras},
journal={J. Amer. Math. Soc.},
date={2018},
volume={31},
number={2},
pages={349--426}}

\bib{KKOP22}{article}{
author={Kashiwara, M.},author={Kim, M.},author={Oh, S. J.},author={Park, E.},
title={Cluster algebra structures on module categories over quantum affine algebras},
journal={Proc. Lond. Math. Soc. (3)},
date={2022},
volume={124},
number={3},
pages={301--372}}

\bib{KOS95}{article}{
author={Kuniba, A.},author={Ohta, Y.},author={Suzuki, J.},
title={Quantum Jacobi-Trudi and Giambelli formulae for $Uq(B^{(1)}_{r})$ from the analytic Bethe ansatz},
journal={J. Phys. A},
date={1995},
volume={28},
number={21},
pages={6211--6226}}

\bib{KS95}{article}{
author={Kuniba, A.},author={Suzuki, J.},
title={Analytic Bethe ansatz for fundamental representations of Yangians},
journal={Comm. Math. Phys.},
date={1995},
volume={173},
number={2},
pages={225--264}}

\bib{LQ17}{article}{
author={Li, J. R.},author={Qiao, L.},
title={Three-term recurrence relations of minimal affinizations of type $G_2$},
journal={J. Lie Theory},
date={2017},
volume={27},
number={4},
pages={1119--1140}}

\bib{Lus93}{article}{
author={Lusztig, G.},
title={Introduction to Quantum Groups},
conference={
title={Progress in Mathematics 110}},
book={
publisher={Birkh$\ddot{a}$user},
address={Boston},
date={1993}}}

\bib{MY12a}{article}{
author={Mukhin, E.},author={Young, C. A.},
title={Path description of type $B$ $q$-characters},
journal={Adv. Math.},
date={2012},
volume={231},
number={2},
pages={1119--1150}}

\bib{MY12b}{article}{
author={Mukhin, E.},author={Young, C. A.},
title={Extended $T$-systems},
journal={Selecta Math. (N.S.)},
date={2012},
volume={18},
number={3},
pages={591--631}}

\bib{Nak01}{article}{
author={Nakajima, H.},
title={Quiver varieties and finite-dimensional representations of quantum affine algebras},
journal={J. Amer. Math. Soc.},
date={2001},
volume={14},
number={1},
pages={145--238}}

\bib{Nak03}{article}{
author={Nakajima, H.},
title={$t$-analogs of $q$-characters of quantum affine algebras of type $A_n, D_n$},
conference={
title={Combinatorial and geometric representation theory ({S}eoul, 2001)}},
book={series={Contemp. Math.},
volume={325},
publisher={Amer. Math. Soc.},
address={Providence, RI},
date={2003}},
pages={141--160}}

\bib{Nak04}{article}{
author={Nakajima, H.},
title={Quiver varieties and $t$-analogs of $q$-characters of quantum affine algebras},
journal={Ann. of Math. (2)},
date={2004},
volume={160},
number={3},
pages={1057--1097}}

\bib{Nak10}{article}{
author={Nakajima, H.},
title={$t$-analogs of $q$-characters of quantum affine algebras of type $E_{6},E_{7},E_{8}$},
conference={
title={Representation theory of algebraic groups and quantum groups}},
book={series={Progr. Math.},
volume={284},
publisher={ Birkh$\ddot{a}$user/Springer},
address={New York},
date={2010}},
pages={257--272}}

\bib{NN07}{article}{
author={Nakai, W.},author={Nakanishi, T.},
title={Paths and tableaux descriptions of Jacobi-Trudi determinant associated with quantum affine algebra of type $D_n$},
journal={J. Algebraic Combin.},
date={2007},
volume={26},
number={2},
pages={253--290}}

\bib{NN11}{article}{
author={Nakai, W.},author={Nakanishi, T.},
title={On Frenkel-Mukhin algorithm for $q$-character of quantum affine algebras},
conference={
title={Exploring new structures and natural constructions in mathematical physics}},
book={series={Adv. Stud. Pure Math.},
volume={61},
publisher={Math. Soc. Japan},
address={Tokyo},
date={2011}},
pages={327--347}}

\bib{Q17}{article}{
author={Qin, F.},
title={Triangular bases in quantum cluster algebras and monoidal categorification conjectures},
journal={Duke Math. J.},
date={2017},
volume={166},
number={12},
pages={2337--2442}}

\bib{TDL23}{article}{
author={Tong, J.},author={Duan, B.},author={Luo, Y. F.},
title={The $q$-characters of minimal affinizations of type $G_2$ arising from cluster algebras},
journal={Comm. Algebra},
date={2023},
volume={51},
number={2},
pages={565--585}}

\bib{VV02}{article}{
author={Varagnolo, M.},author={Vasserot, E.},
title={Standard modules of quantum affine algebras},
journal={Duke Math. J.},
date={2002},
volume={111},
number={3},
pages={509--533}}

\bib{ZDLL16}{article}{
author={Zhang, Q. Q.},author={Duan, B.},author={Li, J. R.},author={Luo, Y. F.},
title={M-systems and cluster algebras},
journal={Int. Math. Res. Not. IMRN},
volume={2016},
number={14},
pages={4449--4486}}
\end{biblist}
\end{bibdiv}

\end{document}